\def\version{16/04/2017 -- version 8
\hfill\href{http://arxiv.org/abs/1503.05902}{arXiv:1503.05902}
}

\documentclass[11pt,a4paper,centertags]{amsart}
\usepackage{fullpage}
\usepackage[hypertexnames=false]{hyperref}

\usepackage{amssymb,amscd}

\usepackage{mathtools}
\usepackage{manfnt}

\usepackage{sseq}

\usepackage[all]{xy}
\newdir{ >}{{}*!/-8pt/@{>}}

\usepackage[scaled=1.1]{rsfso}
\usepackage{mathrsfs}

\usepackage[numeric,lite]{amsrefs}

\usepackage{pigpen}
\def\PO{\text{\pigpenfont R}}

\renewcommand{\thefootnote}{\fnsymbol{footnote}}
\long\def\symbolfootnote[#1]#2{\begingroup%
\def\thefootnote{\fnsymbol{footnote}}\footnote[#1]{#2}\endgroup}

\newtheorem{thm}{Theorem}[section]
\newtheorem{lem}[thm]{Lemma}
\newtheorem{prop}[thm]{Proposition}
\newtheorem{cor}[thm]{Corollary}

\theoremstyle{definition}
\newtheorem{rem}[thm]{Remark}

\newtheorem{conj}[thm]{Conjecture}

\numberwithin{equation}{section}
\numberwithin{figure}{section}

\hyphenation{hom-o-l-ogy co-hom-o-l-ogy Hoch-sch-ild}

\def\:{\colon}
\def\.{\cdot}

\def\<{\left\langle}
\def\>{\right\rangle}
\def\({\left(}
\def\){\right)}
\def\ph#1{\phantom{#1}}
\def\epsilon{\varepsilon}
\def\phi{\varphi}

\def\leq{\leqslant}
\def\geq{\geqslant}

\def\Lra{\Longrightarrow}

\def\tilde#1{\widetilde{#1}}
\def\iso{\cong}

\DeclareMathOperator{\Id}{Id}

\DeclareMathOperator{\im}{im}

\def\F{\mathbb{F}}

\def\k{\Bbbk}

\def\N{\mathbb{N}}

\def\Z{\mathbb{Z}}

\DeclareMathOperator{\codim}{codim}

\DeclareMathOperator{\Comod}{Comod}

\DeclareMathOperator{\Ext}{Ext}

\DeclareMathOperator{\Hom}{Hom}
\DeclareMathOperator{\indet}{indet}

\DeclareMathOperator{\TAQ}{TAQ}
\DeclareMathOperator{\Tor}{Tor}

\def\O{\mathrm{O}}
\def\SO{\mathrm{SO}}
\def\Spin{\mathrm{Spin}}
\def\Spinc{\Spin^{\mathrm{c}}}
\def\String{\mathrm{String}}
\def\tmf{\mathrm{tmf}}
\def\U{\mathrm{U}}

\DeclareMathOperator{\exc}{exc}

\DeclareMathOperator{\Sq}{Sq}
\DeclareMathOperator{\q}{q}
\def\dlQ{\mathrm{Q}}
\def\tdlQ{\tilde{\dlQ}}
\DeclareMathOperator{\quo}{\pi}
\def\tpsi{\tilde{\psi}}
\def\Einfty{$\mathcal{E}_\infty$ }
\def\jc{j^\mathrm{c}}


\title[\boldmath\Einfty ring spectra and elements of
Hopf invariant~$1$]
{\boldmath\Einfty ring spectra and elements of Hopf
invariant~$1$}
\author{Andrew Baker}
\date{\version}
\address{
School of Mathematics \& Statistics,
University of Glasgow, Glasgow G12~8QW, Scotland.}
\email{a.baker@maths.gla.ac.uk}
\urladdr{http://www.maths.gla.ac.uk/$\sim$ajb}
\thanks{The author would like to thank the following
for helpful comments over many years: Tilmann Bauer,
Mark Behrens, Irina Bobkova, Bob Bruner, Andr\'e
Henriques, Mike Hill, Rolf Hoyer, Peter Landweber,
Arunas Liulevicius, Peter May, David Pengelley, John
Rognes and Markus Szymik. Part of the work in this
paper was carried out while the author was a participant
in the Hausdorff Trimester Program
\emph{Homotopy theory, manifolds, and field theories}
during July and August~2015 and he would like to
acknowledge the support of the Hausdorff Research
Institute for Mathematics. \\[10pt]
This paper is dedicated to the memory of
\textbf{Sam Gitler} and the final version appears
in a memorial volume of the \emph{Bolet\'in 
de la Sociedad Matem\'atica Mexicana},~\textbf{23} 
(2017), 195--231.
}
\keywords{Stable homotopy theory, \Einfty ring
spectrum, power operations, comodule algebras}
\subjclass[2010]{Primary 55P43; Secondary 55P42, 57T05}

\begin{document}

\begin{abstract}
The $2$-primary Hopf invariant~$1$ elements
in the stable homotopy groups of spheres form
the most accessible family of elements. In this
paper we explore some properties of the \Einfty
ring spectra obtained from certain iterated
mapping cones by applying the free algebra
functor. In fact, these are equivalent to Thom
spectra over infinite loop spaces related to
the classifying spaces $B\SO,\,B\Spin,\,B\String$.

We show that the homology of these Thom spectra
are all extended comodule algebras of the form
$\mathcal{A}_*\square_{\mathcal{A}(r)_*}P_*$
over the dual Steenrod algebra $\mathcal{A}_*$
with $\mathcal{A}_*\square_{\mathcal{A}(r)_*}\F_2$
as an algebra retract. This suggests that these
spectra might be wedges of module spectra over
the ring spectra $H\Z$, $k\O$ or $\tmf$, however
apart from the first case, we have no concrete
results on this.
\end{abstract}

\maketitle


\section*{Introduction}

The $2$-primary Hopf invariant~$1$ elements in the stable
homotopy groups of spheres form the most accessible family
of elements. In this paper we explore some properties of
the \Einfty ring spectra obtained from certain iterated
mapping cones by applying the free algebra functor. In
fact, these are equivalent to Thom spectra over infinite
loop spaces related to the classifying spaces $B\SO$,
$B\Spin$ and $B\String$.

We show that the homology of these Thom spectra are all
extended comodule algebras of the form
$\mathcal{A}_*\square_{\mathcal{A}(r)_*}P_*$ over the
dual Steenrod algebra $\mathcal{A}_*$ with
$\mathcal{A}_*\square_{\mathcal{A}(r)_*}\F_2$ as a comodule
algebra retract. This suggests that these spectra might
be wedges of module spectra over the ring spectra $H\Z$,
$k\O$ or $\tmf$, however apart from the first case, we
have no concrete results on this.

Our results and methods of proof owe much to work of
Arunas Liulevicius~\cites{AL:NotesThomSpec,AL:HomComod}
and David Pengelley~\cites{DJP:LondonConf,DJP:MSOMSU,DJP:MO<8>},
and are also related to work of Tony Bahri and Mark
Mahowald~\cite{AB&MM:MO<8>} (indeed there are analogues
of our results for $\mathcal{E}_2$ Thom spectra of the
kind they discuss). However we use some additional
ingredients: in particular we make use of formulae for
the interaction between the $\mathcal{A}_*$-coaction and
the Dyer-Lashof operations in the homology of an \Einfty
ring spectrum described in~\cite{Nishida}. We also take
a slightly different approach to identifying when the
homology of a ring spectrum is a cotensor product of the
dual Steenrod algebra $\mathcal{A}_*$ over a finite quotient
Hopf algebra $\mathcal{A}(n)_*$, making use the fact that
the dual Steenrod algebra is an extended
$\mathcal{A}(n)_*$-comodule; in turn this is a consequence
of Margolis' $P$-algebra property of the Steenrod algebra
$\mathcal{A}^*$.

We remark that the finite complexes of Section~\ref{sec:IterMappingCones}
also appear in the recent preprint by Behrens, Stapleton,
Ormsby \& Stojanoska~\cite{MB-KO-NS-VS:tmf*tmf}: each is
the first of a sequence of generalised integral Brown-Gitler
spectra associated with $H\Z$, $k\O$ and $\tmf$,
see~\cite{MB-KO-NS-VS:tmf*tmf}*{section~2.1}
and~\cites{PG-JDS-MM:GenBGit,FC-DD-PG-MM:IntBGit,SMB:bo^tmf}.
We understand that Bob Bruner and John Rognes have also
considered such spectra.

\tableofcontents

\bigskip
\noindent
\textbf{Conventions:}
We will work $2$-locally throughout this paper, thus
all simply connected spaces and spectra will be assumed
localised at the prime~$2$, and $\mathscr{M}_S$ will
denote the the category of $S$-modules where $S$ is
the $2$-local sphere spectrum as considered in~\cite{EKMM}.
We will write $S^0$ for a chosen cofibrant replacement
for the $S$-module $S$ and $S^n=\Sigma^nS^0$. When
discussing CW skeleta of a space $X$ we will always
assume that we have chosen minimal CW models in the
sense of~\cite{AJB&JPM} so that cells correspond to
a basis of $H_*(X)=H_*(X;\F_2)$.

\noindent
\textbf{Notation:}
When working with cell complexes (of spaces or spectra)
we will often indicate the mapping cone of a coextension
$\tilde{g}$ of a map $g\:S^n\to S^k$ by writing
$X\cup_fe^k\cup_g e^{n+1}$.

\[
\xymatrix{
&& \\
&& \ar@{-->}[dl]_{\tilde{g}}S^n\ar[d]^g \\
X \ar[r] & X\cup_fe^k\ar[r] & S^k
}
\qquad
\xymatrix@R=0.7cm{
& *+[o][F]{S^n}\ar[dr]_{g}\ar@{-->}[dl]_{\tilde{g}}
     \ar@/^20pt/[ddrrr]^{g\Sigma f\sim0} && & \\
*+[o][F]{e^k}\ar@{~}[dd]_{f}&&*+[o][F]{S^k}\ar[drr]_{\Sigma f}
                              && \\
&&&& *+[F]{\Sigma X}  \\
 *+[F]{X} && &&
}
\]
Of course this notation is ambiguous, but nevertheless
suggestive. When working stably with spectra we will
often write $h\:S^{n+r}\to S^{k+r}$ for the suspension
$\Sigma^rh$ of a map $h\:S^n\to S^k$. We will also often
identify stable homotopy classes with representing
elements.

\section{Iterated mapping cones built with elements
of Hopf invariant~$1$}\label{sec:IterMappingCones}

The results of this section can be proved by homotopy
theory calculations using basic facts about the elements
of Hopf invariant~$1$ in the homotopy groups of the
sphere spectrum $S^0$,
\[
2\in\pi_0(S^0),
\quad
\eta\in\pi_1(S^0),
\quad
\nu\in\pi_3(S^0),
\quad
\sigma\in\pi_7(S^0).
\]
In particular the following identities are well
known, for example see~\cite{DCR:GreenBook}*{figure~A3.1a}:
\begin{equation}
2\eta = \eta\nu = \nu\sigma = 0.
\end{equation}

Although the next result is probably well known,
we outline some details of the constructions of
such spectra, and in particular describe their
homology as $\mathcal{A}_*$-comodules. Later
we will produce naturally occurring examples of
such spectra, but we feel it worthwhile
discussing their construction from a homotopy
theoretic point of view first. We do not address
the question of uniqueness, but it seems possible
that they are unique up to equivalence.

\begin{prop}\label{prop:Iterated}
The following CW spectra exist:
\[
S^0\cup_\eta e^2\cup_2e^3,\quad
S^0\cup_\nu e^4\cup_\eta e^6\cup_2e^7,\quad
S^0\cup_\sigma e^8\cup_\nu e^{12}\cup_\eta e^{14}\cup_2e^{15}.
\]
\end{prop}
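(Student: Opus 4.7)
The plan is to construct each of the three spectra by attaching cells one at a time from the bottom, verifying at each stage the coextension criterion displayed in the diagrams preceding the proposition. Explicitly, given $W=X\cup_fe^k$ with attaching map $f\:S^{k-1}\to X$ and a further map $g\:S^n\to S^k$, a coextension $\tilde g\:S^n\to W$ exists if and only if the composite $\Sigma f\circ g\in[S^n,\Sigma X]$ vanishes; this is the condition I would check at each successive stage.

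The first case is immediate: the mapping cone of $\eta$ exists, and the obstruction for the coextension of $2$ is $\Sigma\eta\circ 2=2\eta=0$. For the second spectrum, form $X_1=S^0\cup_\nu e^4$; the coextension of $\eta$ attaching $e^6$ exists because $\Sigma\nu\circ\eta=\eta\nu=0$, producing $X_2$. The remaining obstruction $2\Sigma\tilde\eta\in\pi_6(\Sigma X_1)$ I would analyse via the cofibre sequence $S^1\to\Sigma X_1\to S^5$; since $\pi_5(S^0)=0$, this group injects into $\pi_1(S^0)=\Z/2$, in which the image of $\Sigma\tilde\eta$ is $\eta$, so $2\Sigma\tilde\eta$ maps to $2\eta=0$ and therefore vanishes.

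The third spectrum proceeds analogously but with more bookkeeping. Starting from $Y_1=S^0\cup_\sigma e^8$, the relation $\nu\sigma=0$ produces $Y_2=Y_1\cup_\nu e^{12}$. The obstruction for attaching $e^{14}$ via a coextension of $\eta$ lives in $\pi_{13}(\Sigma Y_1)$, and the cofibre sequence $S^1\to\Sigma Y_1\to S^9$ together with $\pi_{12}(S^0)=\pi_4(S^0)=0$ forces this group to be zero. Finally, the obstruction for attaching $e^{15}$ via a coextension of $2$ lies in $\pi_{14}(\Sigma Y_2)$, and iterated use of the long exact sequences (invoking the $2$-local vanishing of $\pi_{13}(S^0)$ and $\pi_5(S^0)$) identifies this group with $\pi_1(S^0)=\Z/2$, where the obstruction class $2\Sigma\tilde\eta$ projects to $2\eta=0$.

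The main obstacle is that after the first attachment the top-cell obstructions no longer reduce directly to the Hopf-invariant-$1$ relations but live in the homotopy groups of multi-cell subspectra. The argument works because of a favourable numerical coincidence: the only $2$-local stable stems of the relevant dimensions are either zero ($\pi_4(S^0),\pi_5(S^0),\pi_{12}(S^0),\pi_{13}(S^0)$) or the copy of $\Z/2$ generated by $\eta$, so each successive obstruction is either forced to vanish by dimension or reduces, after long-exact-sequence projection, to $2\eta=0$.
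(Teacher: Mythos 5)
Your proof is correct and follows essentially the same route as the paper: attach cells one at a time, use the Hopf-invariant-one relations to produce each coextension, and kill the successive top-cell obstructions by chasing the long exact sequences of the subskeleta using the $2$-local vanishing of $\pi_4$, $\pi_5$, $\pi_{12}$, $\pi_{13}$ of $S^0$ together with $2\eta=0$. The paper's sketch additionally records cobar representatives and the $\mathcal{A}_*$-coaction on the homology of each stage, but that data is for later use and is not needed for the existence statement itself.
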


\begin{proof}[Sketch of proof]
In each of the iterated mapping cones below, we will denote
the homology generator corresponding to the unique cell in
dimension~$n$ by~$x_n$.

The case of $S^0\cup_\eta\cup_2e^3$ is obvious.

Consider the mapping cone of $\nu$, $C_\nu=S^0\cup_\nu e^4$.
As $\nu\eta=0$, there is a factorisation of $\eta$ on the
$4$-sphere through $C_\nu$.
\[
\xymatrix{
&&& S^5\ar[d]_{2} & \\
&&& \ar@{.>}[dl]_{\tilde{\eta x_4}}S^5\ar[d]_{\eta}\ar[dr]^{\nu\eta=0} & \\
S^3\ar[r]_\nu & S^0\ar[r] & C_\nu\ar[r] & S^4\ar[r]_{\nu}\ar[r] & S^1 \\
}
\]
Also, $2\eta=0$ and $\pi_5(S^0)=0$, hence $2\tilde{\eta x_4}=0$.
A cobar representative for $\tilde{\eta x_4}$ in the classical
Adams $\mathrm{E}_2$-term is
\[
[\zeta_1^2\otimes x_4 + \zeta_2^2\otimes x_0]
  \in \Ext^{1,6}_{\mathcal{A}_*}(\F_2,H_*(C_\nu)).
\]
We can form the mapping cone $C_{\tilde{\eta x_4}}=C_\nu\cup_{\tilde{\eta x_4}}e^6$
and since $2\tilde{\eta x_4}=0$, there is a factorisation of~$2$
on the $6$-sphere through $C_{\tilde{\eta x_4}}$.
\[
\xymatrix{
&&&\ar@{.>}[dl]_{\tilde{2x_6}}\ar[d]_{2}S^6\ar[dr]^{2\tilde{\eta x_4}=0}& \\
S^5\ar[r]_{\phi_1} & C_\nu\ar[r] & C_{\tilde{\eta x_4}}\ar[r]
              & S^6\ar[r]_{\tilde{\eta x_4}} & \Sigma C_\nu \\
}
\]
A cobar representative of $\tilde{2x_6}$ is
\[
[\zeta_1\otimes x_6 + \zeta_2\otimes x_4 + \zeta_3\otimes x_0]
 \in \Ext^{1,7}_{\mathcal{A}_*}(\F_2,H_*(C_{\tilde{\eta x_4}})).
\]

Consider the mapping cone of $\sigma$, $C_\sigma=S^0\cup_\sigma e^8$.
As $\sigma\nu=0$, there is a factorisation of $\nu$ on the $8$-cell
through $C_\sigma$.
\[
\xymatrix{
&&& S^{12}\ar[d]_{\eta} & \\
&&& \ar@{.>}[dl]_{\tilde{\nu x_8}}S^{11}\ar[d]_{\nu}\ar[dr]^{\sigma\nu=0} & \\
S^7\ar[r]_\sigma & S^0\ar[r] & C_\sigma\ar[r] & S^8\ar[r]_{\sigma}\ar[r] & S^1 \\
}
\]
Also, $\nu\eta=0$ and $\pi_{12}(S^0)=0=\pi_{13}(S^0)$,
hence $\eta(\tilde{\nu x_8})=0$.

As $\Ext^{1,12}_{\mathcal{A}_*}(\F_2,H_*(S^0))=0$, the
element
\[
[\zeta_1^4\otimes x_8 + \zeta_2^4\otimes x_0]
  \in \Ext^{1,12}_{\mathcal{A}_*}(\F_2,H_*(C_\sigma))
\]
is a cobar representative for $\tilde{\nu x_8}$.

We can form the mapping cone
$C_{\tilde{\nu x_8}}=C_\sigma\cup_{\tilde{\nu x_8}}e^{12}$
and since $\eta\tilde{\nu x_8}=0$, there is a factorisation
of $\eta$ on the $12$-sphere through $C_{\tilde{\nu x_8}}$.
\[
\xymatrix{
&&&S^{13}\ar[d]_{2}& \\
&&&\ar@{.>}[dl]_{\tilde{\eta x_{12}}}\ar[d]_{\eta}S^{13}\ar[dr]^{\eta(\tilde{\nu x_8})=0}& \\
S^{11}\ar[r]_{\tilde{\nu x_8}} & C_\sigma\ar[r] & C_{\tilde{\nu x_8}}\ar[r]
            & S^{12}\ar[r]_{\tilde{\nu x_8}} & \Sigma C_\sigma \\
}
\]
As part of the long exact sequence for the homotopy
of mapping cone we have the exact sequence
\[
\pi_{13}(S^7) \xrightarrow{\;\sigma\;}\pi_{13}(S^0)
     \xrightarrow{\;\ph{\sigma}\;}\pi_{13}(C_\sigma)
     \xrightarrow{\;\ph{\sigma}\;}\pi_{13}(S^8)
\]
we have $\pi_{13}(S^0)=0=\pi_{13}(S^8)$, so $\pi_{13}(C_\sigma)=0$.
Therefore $2(\tilde{\eta x_{12}})=0$ and we can factorise
$2$ on the $14$-sphere through the mapping cone of
$\tilde{\eta x_{12}}$, $C_{\tilde{\eta x_{12}}}$.
\[
\xymatrix{
&&&\ar@{.>}[dl]_{\tilde{2 x_{14}}}S^{14}\ar[d]_{2}&   \\
S^{13}\ar[r]_{\tilde{\eta x_{12}}} & C_{\tilde{\nu x_8}}\ar[r]
   & C_{\tilde{\eta x_{12}}}\ar[r] & S^{14}\ar[r]_{\tilde{\eta x_{12}}}
   & \Sigma C_{\tilde{\nu x_8}}
}
\]
A cobar representative of $\tilde{2x_{14}}$ is
\[
[\zeta_1\otimes x_{14} + \zeta_2\otimes x_{12}
             + \zeta_3\otimes x_8 + \zeta_4\otimes x_0]
\in\Ext^{1,15}_{\mathcal{A}_*}(\F_2,H_*(C_{\tilde{\eta x_{12}}})).
\]
The homology of the mapping cone $C_{\tilde{2x_{14}}}$
has a basis $x_0,x_8,x_{12},x_{14},x_{15}$, with coaction
given by
\begin{subequations}\label{subeqn:tmf-coaction}
\begin{align}
\psi x_8 &= \zeta_1^8\otimes1 + 1\otimes x_8, \\
\psi x_{12} &=
\zeta_2^4\otimes 1 + \zeta_1^4\otimes x_8 + 1\otimes x_{12},  \\
\psi x_{14} &=
\zeta_3^2\otimes 1 + \zeta_2^2\otimes x_8 + \zeta_1^2\otimes x_{12}
           + 1\otimes x_{14},  \\
\psi x_{15} &=
\zeta_4\otimes 1 + \zeta_3\otimes x_8+ \zeta_2\otimes x_{12}
           + 1\otimes x_{15}.
\end{align}
\end{subequations}

These calculations show that CW spectra of the stated
forms do indeed exist.
\end{proof}
\begin{rem}\label{rem:MinAtom}
The spectra of Proposition~\ref{prop:Iterated} are all
minimal atomic in the sense of~\cite{AJB&JPM}; this
follows from the fact that in each case the mod~$2$
cohomology is a cyclic $\mathcal{A}^*$-module.
\end{rem}

\section{Some \Einfty Thom spectra}
\label{sec:EinftyThomSpectra}

Consider the three infinite loop spaces
$B\SO = B\O\langle2\rangle$, $B\Spin = B\O\langle4\rangle$
and $B\String = B\O\langle8\rangle$. The $3$-skeleton of
$B\SO$ is
\[
B\SO^{[3]} = B\O\langle2\rangle^{[3]} = S^2\cup_2e^3
\]
since $\Sq^1w_2=w_3$. Similarly, the $7$-skeleton
of $B\Spin$ is
\[
B\Spin^{[7]} = B\O\langle4\rangle^{[7]} = S^4\cup_\eta e^6\cup_2e^7
\]
since $\Sq^2w_4=w_6$ and $\Sq^1w_6=w_7$. Finally,
the $15$-skeleton of $B\String$ is
\[
B\String^{[15]} = B\O\langle8\rangle^{[15]}
= S^8\cup_\nu e^{12}\cup_\eta e^{14}\cup_2 e^{15}
\]
since $\Sq^4w_8=w_{12}$, $\Sq^2w_{12}=w_{14}$ and
$\Sq^1w_{14}=w_{15}$.

The skeletal inclusion maps induce (virtual) bundles
whose Thom spectra are themselves skeleta of the
universal Thom spectra $M\SO$, $M\Spin$ and $M\String$.
Routine calculations with Steenrod operations and the
Wu formulae show that
\begin{align*}
M\SO^{[3]}  = M\O\langle2\rangle^{[3]}
           &= S^0\cup_\eta e^2\cup_2 e^3, \\
M\Spin^{[7]} = M\O\langle4\rangle^{[7]}
            &= S^0\cup_\nu e^4\cup_\eta e^6\cup_2 e^7, \\
M\String^{[15]} = M\O\langle8\rangle^{[15]} &=
S^0\cup_\sigma e^8\cup_\nu e^{12}\cup_\eta e^{14}\cup_2 e^{15}.
\end{align*}
Thus these Thom spectra are examples of `iterated
Thom complexes' similar in spirit to those discussed
in~\cite{4A}.

Each skeletal inclusion factors uniquely through
an infinite loop map~$j_r$,
\[
\xymatrix{
B\O\langle2^r\rangle^{[2^{r+1}-1]}\ar[dr]\ar[rr] && B\O\langle2^r\rangle \\
& \dlQ B\O\langle2^r\rangle^{[2^{r+1}-1]}\ar[ur]_{j_1} & \\
}
\]
where $\dlQ=\Omega^\infty\Sigma^\infty$ is the free
infinite loop space functor. We can also form the
associated Thom spectrum $Mj_r$ which is an \Einfty
ring spectrum admitting an \Einfty morphism
$Mj_r\to M\O\langle2^r\rangle$ factoring the corresponding
skeletal inclusion.

Using the algebra of Appendix~\ref{sec:HomConnCov}, it is
easy to see that the skeletal inclusions induce monomorphisms
in homology whose images contain the lowest degree generators:
\begin{align*}
1,a_{1,0}^{(1)},a_{3,0}&\in H_*(M\SO), \\
1,a_{1,0}^{(2)},a_{3,0}^{(1)},a_{7,0}&\in H_*(M\Spin), \\
1,a_{1,0}^{(3)},a_{3,0}^{(2)},a_{7,0}^{(1)},a_{15,0}&\in H_*(M\String).
\end{align*}
Each of the natural orientations $M\O\langle n\rangle\to H\F_2$
above induces an algebra homomorphism
$H_*(M\O\langle n\rangle)\to\mathcal{A}_*$ for which
\[
a_{1,0}^{(r)}\mapsto\zeta_1^{2^r},
\quad a_{3,0}^{(r)}\mapsto\zeta_2^{2^r},
\quad
a_{7,0}^{(r)}\mapsto\zeta_3^{2^r},
\quad
a_{15,0}^{(r)}\mapsto\zeta_4^{2^r}.
\]

We also note that the skeleta can be identified with
skeleta of $H\Z$, $k\O$ and $\tmf$, namely there are
orientations inducing weak equivalences
\begin{equation}\label{eq:SkeletalOrientations}
M\O\langle2\rangle^{[3]} \xrightarrow{\simeq} H\Z^{[3]},
\quad
M\O\langle4\rangle^{[7]} \xrightarrow{\simeq} k\O^{[7]},
\quad
M\O\langle8\rangle^{[15]} \xrightarrow{\simeq} \tmf^{[15]}.
\end{equation}
The first two are induced from well known orientations,
while the third relies on unpublished work of Ando,
Hopkins \& Rezk~\cite{AHR:MO<8>->tmf}. Actually such
morphisms can be produced using the \emph{reduced free
commutative $S$-algebra} functor $\tilde{\mathbb{P}}$
of~\cite{TAQ-PX}, which has a universal property
analogous to that of the usual free functor $\mathbb{P}$
of~\cite{EKMM}.
\begin{prop}\label{prop:RedP-M}
For $r=1,2,3$, the natural map $M\O\langle2^r\rangle^{[2^{r+1}-1]}\to Mj_r$
has a unique extensions to a weak equivalence of \Einfty
ring spectra
\[
\tilde{\mathbb{P}}M\O\langle2^r\rangle^{[2^{r+1}-1]}\xrightarrow{\;\sim\;}Mj_r.
\]
The orientations of \eqref{eq:SkeletalOrientations} induce
morphisms of \Einfty  ring spectra
\[
\tilde{\mathbb{P}}M\O\langle2\rangle^{[3]}\to H\Z,
\quad
\tilde{\mathbb{P}}M\O\langle4\rangle^{[7]}\to k\O,
\quad
\tilde{\mathbb{P}}M\O\langle8\rangle^{[15]}\to\tmf.
\]
\end{prop}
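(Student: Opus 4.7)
The plan is to use the universal property of the reduced free commutative $S$-algebra functor $\tilde{\mathbb{P}}$ from~\cite{TAQ-PX}, which asserts that morphisms $\tilde{\mathbb{P}} X\to R$ of augmented commutative $S$-algebras correspond naturally to $S$-module maps $X\to R$ into the augmentation ideal of~$R$. Applying this to the natural inclusion $\iota_r\colon M\O\langle 2^r\rangle^{[2^{r+1}-1]}\to Mj_r$---itself the Thom spectrum of the unit $Y\to\dlQ Y$ of the $\Sigma^\infty\dashv\Omega^\infty$ adjunction applied to $Y=B\O\langle 2^r\rangle^{[2^{r+1}-1]}$---produces a unique extension
\[
\tilde\iota_r\colon \tilde{\mathbb{P}}M\O\langle 2^r\rangle^{[2^{r+1}-1]}\lra Mj_r
\]
in the category of augmented \Einfty ring spectra.

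To show $\tilde\iota_r$ is a weak equivalence I would pass to mod~$2$ homology. Because $j_r$ is an infinite loop map, the Thom isomorphism for $Mj_r$ is multiplicative and Dyer--Lashof equivariant, giving
\[
H_*(Mj_r)\iso H_*(\dlQ Y)
\]
as $\mathcal{A}_*$-comodule algebras with compatible Dyer--Lashof structure. The classical Kudo--Araki--Dyer--Lashof theorem identifies $H_*(\dlQ Y)$ with the free commutative $\F_2$-algebra on the Dyer--Lashof operations applied to $\bar H_*(Y)$, and (after the Thom isomorphism) with the free $\mathcal{E}_\infty$-algebra over $\F_2$ on $\bar H_*(M\O\langle 2^r\rangle^{[2^{r+1}-1]})$. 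On the other hand, $H_*(\tilde{\mathbb{P}} X)$ is by construction this same free object on $\bar H_*(X)$, and $\tilde\iota_r$ restricts to the identity on the generating module. Thus $\tilde\iota_r$ is an $\F_2$-homology isomorphism, and since both sides are connective and $2$-local this upgrades to a weak equivalence.

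For the second assertion I would argue by composition. Having established $\tilde{\mathbb{P}} M\O\langle 2^r\rangle^{[2^{r+1}-1]}\simeq Mj_r$ as \Einfty ring spectra, it suffices to compose with the \Einfty morphism $Mj_r\to M\O\langle 2^r\rangle$ noted in the paragraph preceding the proposition, and then with the known \Einfty orientation $M\O\langle 2^r\rangle\to R$ where $R=H\Z,\,k\O,\,\tmf$ for $r=1,2,3$ respectively. The first two orientations are classical; the third is~\cite{AHR:MO<8>->tmf}.

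The principal obstacle is the homology calculation confirming that $\tilde\iota_r$ is an equivalence: one must verify that the Thom isomorphism intertwines the Dyer--Lashof operations on $H_*(Mj_r)$ with those on $H_*(\dlQ Y)$ in the right way, so that the resulting description of $H_*(Mj_r)$ as a free $\mathcal{E}_\infty$-algebra over $\F_2$ is genuinely compatible with the operadic structure rather than merely with the underlying $\mathcal{A}_*$-comodule algebra structure. The Nishida-relation formalism mentioned in the introduction is what one ultimately needs in order to control this compatibility.
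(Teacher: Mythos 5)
Your argument is essentially the paper's: the extension exists and is unique by the universal property of $\tilde{\mathbb{P}}$, and it is an equivalence because the free-algebra description of $H_*(\tilde{\mathbb{P}}X)$ from \cite{TAQ-PX} matches the description of $H_*(Mj_r)$ in Theorem~\ref{thm:H*Mjr} obtained from the (Dyer--Lashof compatible) Thom isomorphism over $\dlQ B\O\langle2^r\rangle^{[2^{r+1}-1]}$. Two small corrections: the universal property is for maps under $S^0$ (the slice category $S^0/\mathscr{M}_S$), not for augmented algebras and augmentation ideals; and the paper produces the morphisms to $H\Z$, $k\O$ and $\tmf$ by applying that same universal property to the skeletal orientations of \eqref{eq:SkeletalOrientations}, rather than by composing through $M\O\langle2^r\rangle$ as you do --- your route is also fine but requires the \Einfty orientation of the full Thom spectrum (for $r=3$ the Ando--Hopkins--Rezk map), whereas only the skeletal equivalences are strictly needed.
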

\begin{proof}
The existence of such morphisms depends on the universal
property of $\tilde{\mathbb{P}}$. The proof that those of
the first kind are equivalences depends on a comparison
of the homology rings using Theorem~\ref{thm:H*Mjr} below.
\end{proof}

\begin{rem}\label{rem:SkeletalOrientations}
In fact the weak equivalences of~\eqref{eq:SkeletalOrientations}
extend to weak equivalences
\begin{equation}\label{eq:SkeletalOrientations-2}
Mj_1 \sim H\Z^{[4]},
\quad
Mj_2 \sim k\O^{[8]},
\quad
Mj_3 \sim \tmf^{[16]}.
\end{equation}
\end{rem}

The homology of $Mj_r$ can be determined from that of the
underlying infinite loop space using the Thom isomorphism,
while that for the others depends on a general description
of the homology of $H_*(\tilde{\mathbb{P}}X)$ which can
be found in~\cite{TAQ-PX}.
\begin{thm}\label{thm:H*Mjr}
The homology rings of the Thom spectra $Mj_r$ are given
by
\begin{multline*}
H_*(Mj_1) =
\F_2[\dlQ^Ix_2,\dlQ^Jx_3 :
\text{\rm $I,J$ admissible, $\exc(I)>2$, $\exc(J)>3$}], \\
\shoveleft{H_*(Mj_2) = }  \\
\F_2[\dlQ^Ix_4,\dlQ^Jx_6,\dlQ^Kx_7 :
\text{\rm $I,J,K$ admissible, $\exc(I)>4$, $\exc(J)>6$, $\exc(K)>7$}], \\
\shoveleft{H_*(Mj_3) =
\F_2[\dlQ^Ix_8,\dlQ^Jx_{12},\dlQ^Kx_{14},\dlQ^Lx_{15}
                       : \text{\rm $I,J,K,L$ admissible,} } \\
\text{\rm $\exc(I)>8$, $\exc(J)>12$, $\exc(K)>14$, $\exc(L)>15$}].
\end{multline*}
The \Einfty orientations $Mj_r\to H\F_2$ induce
algebra homomorphisms $H_*(Mj_r)\to\mathcal{A}_*$
which have images
\begin{align*}
\F_2[\zeta_1^2,\zeta_2,\zeta_3,\ldots]
                  &\iso H_*(H\Z), \\
\F_2[\zeta_1^4,\zeta_2^2,\zeta_3,\zeta_4,\ldots]
                  &\iso H_*(k\O), \\
\F_2[\zeta_1^8,\zeta_2^4,\zeta_3^2,\zeta_4,\zeta_5,\ldots]
                  &\iso H_*(\tmf).
\end{align*}
\end{thm}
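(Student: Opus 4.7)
The plan is to combine the Thom isomorphism with the Kudo--Araki description of the mod~$2$ homology of $\dlQ X=\Omega^\infty\Sigma^\infty X$. Since $Mj_r$ is the Thom spectrum of the infinite loop map $j_r\colon\dlQ B\O\langle 2^r\rangle^{[2^{r+1}-1]}\to B\O\langle 2^r\rangle$, which factors through $B\O$ and is therefore $H\F_2$-orientable, the Thom isomorphism yields an isomorphism of $\F_2$-algebras
\[
H_*(Mj_r)\cong H_*\bigl(\dlQ B\O\langle 2^r\rangle^{[2^{r+1}-1]}\bigr).
\]
By Kudo--Araki, the right-hand side is the polynomial algebra on admissible Dyer--Lashof monomials $\dlQ^Ix$, with $x$ running over a basis of $\tilde H_*(B\O\langle 2^r\rangle^{[2^{r+1}-1]};\F_2)$ and $\exc(I)>|x|$. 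From the Wu-formula computation at the start of this section the relevant bases are $\{w_2,w_3\}$, $\{w_4,w_6,w_7\}$ and $\{w_8,w_{12},w_{14},w_{15}\}$ for $r=1,2,3$ respectively; writing $x_n$ for the image of $w_n$ under the Thom isomorphism, the three displayed polynomial rings drop out verbatim.

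For the second half of the theorem, the orientations $Mj_r\to H\F_2$ are \Einfty maps, so the induced algebra homomorphisms $H_*(Mj_r)\to\mathcal{A}_*$ commute with Dyer--Lashof operations. The images of the polynomial generators $x_n$ are precisely those computed in the paragraph just preceding the theorem (since $Mj_r\to H\F_2$ factors the skeletal orientation $M\O\langle 2^r\rangle^{[2^{r+1}-1]}\to H\F_2$): namely $x_{2^r}\mapsto\zeta_1^{2^r}$, $x_{3\cdot 2^{r-1}}\mapsto\zeta_2^{2^{r-1}}$, and so on, with the top cell $x_{2^{r+1}-1}\mapsto\zeta_{r+1}$. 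Iterating admissible Dyer--Lashof operations on these elements and invoking the Steinberger/Nishida formulas for the $\dlQ^I$-action on Milnor generators, one expects the image to be the polynomial subalgebra $\F_2[\zeta_1^{2^r},\zeta_2^{2^{r-1}},\ldots,\zeta_r^2,\zeta_{r+1},\zeta_{r+2},\ldots]\subset\mathcal{A}_*$, which is the standard description of $H_*(H\Z)$, $H_*(k\O)$ and $H_*(\tmf)$ as Hopf subalgebras of $\mathcal{A}_*$.

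The main obstacle will be pinning this image down precisely. Containment in the stated polynomial subalgebra ought to follow from the $\mathcal{A}_*$-subcomodule structure (in fact the $\mathcal{A}_*\square_{\mathcal{A}(r)_*}\F_2$ of the abstract), since Dyer--Lashof operations on $\mathcal{A}_*$ preserve the kernel of the projection to $\mathcal{A}(r)_*$. Exhaustion requires Steinberger's explicit formulas $\dlQ^{2^k}\zeta_k\equiv\zeta_{k+1}$ modulo decomposables, together with a Poincar\'e-series comparison against the known homology of $H\Z$, $k\O$ and $\tmf$ to confirm that every required Milnor generator is indeed hit.
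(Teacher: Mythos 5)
Your proposal is correct and takes essentially the same route as the paper, which states this theorem without a formal proof, justifying the first half in one sentence by the Thom isomorphism together with the standard description of $H_*(\dlQ X)$ (equivalently of $H_*(\tilde{\mathbb{P}}X)$), and the second half by the images of the bottom-cell generators combined with closure of $\F_2[\zeta_1^{2^r},\zeta_2^{2^{r-1}},\ldots,\zeta_{r+1},\zeta_{r+2},\ldots]$ under the Dyer--Lashof operations (Corollary~\ref{cor:Qkzetas}). The only quibble is that your closing Poincar\'e-series comparison is not the right tool for exhaustion, since the orientation is far from injective; what is actually needed, and what you already invoke, is the inductive step $\dlQ^{2^{s-1}}\zeta_{s-1}\equiv\zeta_s$ modulo elements already known to lie in the image.
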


Recalling Remark~\ref{rem:MinAtom}, we note the
following, where minimal atomic \Einfty ring
spectrum is used in the sense of Hu, Kriz and May,
subsequently developed further in~\cite{BGRtaq}.
\begin{prop}\label{prop:MinAtom}
Each of the \Einfty ring spectra $Mj_r$ $(r=1,2,3)$
is minimal atomic.
\end{prop}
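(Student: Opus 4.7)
The plan is to apply the $\TAQ$-based criterion for minimal atomicity of \Einfty ring spectra developed in~\cite{BGRtaq}, exploiting the free-algebra description from Proposition~\ref{prop:RedP-M}. Setting $Z_r := M\O\langle 2^r\rangle^{[2^{r+1}-1]}$, Proposition~\ref{prop:RedP-M} gives an equivalence $Mj_r \simeq \tilde{\mathbb{P}} Z_r$ of \Einfty ring spectra. Because $\tilde{\mathbb{P}}$ is left adjoint to the forgetful functor from augmented \Einfty $S$-algebras to (reduced) $S$-modules, its universal property identifies the $\TAQ$ of a free \Einfty algebra with its linear (i.e.\ non-unital augmentation ideal) part, so
\[
\TAQ_*(Mj_r;H\F_2) \iso \tilde{H}_*(Z_r)
\]
as $\mathcal{A}_*$-comodules.

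Next I would verify that $\tilde{H}^*(Z_r;\F_2)$ is cyclic as an $\mathcal{A}^*$-module, generated by the bottom class in degree $2^r$. The connecting Steenrod operations $\Sq^{2^{r-1}},\ldots,\Sq^2,\Sq^1$ relating the cells are precisely those identified via the Wu-formula computations at the start of Section~\ref{sec:EinftyThomSpectra}, and this cyclicity is exactly the observation underlying Remark~\ref{rem:MinAtom} at the level of the underlying $S$-module. Dualising, $\tilde{H}_*(Z_r)$ admits no nontrivial $\mathcal{A}_*$-comodule retract containing the top class, so it is minimal atomic as an $\mathcal{A}_*$-comodule.

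Finally I would invoke the transfer principle from~\cite{BGRtaq}: a connective augmented \Einfty ring spectrum whose $\TAQ$-homology over $H\F_2$ is minimal atomic as an $\mathcal{A}_*$-comodule is itself minimal atomic in the \Einfty sense. The main obstacle is ensuring this transfer is applied cleanly, but the freeness $Mj_r\simeq\tilde{\mathbb{P}} Z_r$ makes the argument essentially automatic. Indeed, under the $(\tilde{\mathbb{P}},U)$ adjunction an \Einfty self-map of $Mj_r$ fixing the bottom cell, or an \Einfty retract $Mj_r\to R\to Mj_r$ that contains it, corresponds to an $S$-module endomorphism, respectively retract, of $Z_r$ preserving the generator in degree~$2^r$; the cyclicity established in the previous paragraph forces any such to be a weak equivalence. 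Hence $Mj_r$ admits no proper \Einfty retract containing its bottom cell and is minimal atomic, as required.
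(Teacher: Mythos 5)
Your first step agrees with the paper: by the universal property of $\tilde{\mathbb{P}}$ one has $\Omega_S(\tilde{\mathbb{P}}X)\sim\tilde{\mathbb{P}}X\wedge X/S^0$ and hence $\TAQ_*(Mj_r,S;H)\iso H_*(Z_r/S^0)$, and the cyclicity of $H^*(Z_r)$ over $\mathcal{A}^*$ (Remark~\ref{rem:MinAtom}) is the right input. But the ``transfer principle'' you then invoke is not the criterion of~\cite{BGRtaq}. Theorem~3.3 of that paper characterises minimal atomicity by the vanishing of the $\TAQ$ \emph{Hurewicz homomorphism} $\pi_n(Mj_r)\to\TAQ_n(Mj_r,S;H)$ for $n>0$; it is a statement about where the homotopy groups land, not about the internal comodule structure of $\TAQ_*$. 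Knowing that $\tilde{H}_*(Z_r)$ is an indecomposable (``cyclic'') comodule tells you nothing about whether classes in $\pi_*(Mj_r)$ map to nonzero elements of it. The missing step is the homotopy-theoretic input: since the $(2^{r+1}-1)$-skeleton of $Mj_r$ is $Z_r$ itself and $Z_r$ is a minimal atomic $S$-module, the ordinary mod~$2$ Hurewicz map $\pi_*(Mj_r)\to H_*(Mj_r)$ is trivial for $0<*<2^{r+1}$; as $\TAQ_*(Mj_r,S;H)\iso H_*(Z_r/S^0)$ is concentrated in exactly that range and the $\TAQ$ Hurewicz factors through the ordinary one, it vanishes in positive degrees, and only then does \cite{BGRtaq}*{theorem~3.3} apply.

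Your fallback adjunction argument also does not work. Under the $(\tilde{\mathbb{P}},U)$ adjunction, an \Einfty self-map of $Mj_r\simeq\tilde{\mathbb{P}}Z_r$ corresponds to an $S$-module map $Z_r\to\tilde{\mathbb{P}}Z_r$, not to a self-map of $Z_r$: there is no reason for the adjoint to factor through the summand $Z_r$, so self-maps and retracts of $Mj_r$ cannot be reduced to those of $Z_r$. Moreover ``admits no proper retract containing the bottom cell'' is irreducibility, not minimal atomicity; minimal atomicity is the stronger condition that every map from an atomic complex which is a monomorphism on homotopy is an equivalence, and it is precisely to detect this that the $\TAQ$ Hurewicz criterion is needed.
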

\begin{proof}
In \cite{TAQ-PX} we showed that for $X\in S^0/\mathscr{M}_S$
in the slice category of $S$-modules under a cofibrant
replacement of~$S$,
\[
\Omega_S(\tilde{\mathbb{P}}X) \sim \tilde{\mathbb{P}}X\wedge X/S^0,
\]
hence
\[
\TAQ_*(\tilde{\mathbb{P}}X,S;H) \iso H_*(X/S^0).
\]
For $Mj_r\sim\tilde{\mathbb{P}}M\O\langle2^r\rangle^{[2^{r+1}-1]}$,
this gives
\[
\TAQ_*(Mj_r,S;H) \iso H_*(M\O\langle2^r\rangle^{[2^{r+1}-1]}/S^0).
\]

The $(2^{r+1}-1)$-skeleton for a minimal cell structure
on the spectrum~$Mj_r$ agrees with $M\O\langle2^r\rangle^{[2^{r+1}-1]}$,
and this is a minimal atomic $S$-module as noted in
Remark~\ref{rem:MinAtom}. It follows that the mod~$2$
Hurewicz homomorphism $\pi_*(Mj_r)\to H_*(Mj_r)$ is
trivial in the range $0<*<2^{r+1}$. Hence the $\TAQ$
Hurewicz homomorphism
\[
\pi_*(Mj_r)\to\TAQ_*(Mj_r,S;H)\xrightarrow{\iso} H_*(Mj_r/S^0)
\]
is trivial. Now by~\cite{BGRtaq}*{theorem~3.3},~$Mj_r$
is minimal atomic as claimed.
\end{proof}

\section{Some coalgebra}
\label{sec:Coalgebra}

In this section we review some useful results
on comodules over Hopf algebras. Although most
of this material is standard we state some results
in a precise form suitable for our requirements.
Since writing early versions of this paper we
became aware of work by Hill~\cite{MAH:cyclic}
which uses similar results.

First we recall a standard algebraic result, for
example see~\cite{DJP:MSOMSU}*{lemma~3.1}. We work
with vector spaces over a field $\k$ and will set
$\otimes=\otimes_\k$. There are slight modifications
required for the graded case which we leave the
reader to formulate, however as we work exclusively
in characteristic~$2$, these have no significant
effect in this paper. We refer to the classic paper
of Milnor and Moore~\cite{M&M} for background material
on coalgebra.

Let $A$ be a commutative Hopf algebra over a field~$\k$,
and let $B$ be a quotient Hopf algebra of~$A$. We denote
the product and antipode on $A$ by $\phi_A$ and $\chi$,
and the coaction on a left comodule $D$ by $\psi_D$. We
will identify the cotensor product
$A\square_B\k\subseteq A\otimes\k$ with a subalgebra
of~$A$ under the canonical isomorphism
$A\otimes\k\xrightarrow{\;\iso\;}A$.
\begin{lem}\label{lem:Splitting-Alg}
Let $D$ be a commutative $A$-comodule algebra. Then there
is an isomorphism of $A$-comodule algebras
\begin{equation}\label{eq:Splitting-Alg}
(\phi_A\otimes\Id_{D})\circ(\Id_{A}\otimes\psi_{D})
               \:(A\square_{B}\k)\otimes D
     \xrightarrow{\;\iso\;} A\square_{B}D;
       \quad
a\otimes x \longleftrightarrow \sum_i aa_i\otimes x_i,
\end{equation}
where $\psi_{D}x = \sum_ia_i\otimes x_i$ denotes the
coaction on $x\in D$.
\end{lem}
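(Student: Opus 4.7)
The plan is to exhibit an explicit inverse for $\Phi := (\phi_A\otimes \Id_D)\circ(\Id_A\otimes\psi_D)$ using the antipode, and then check compatibility with the comodule and algebra structures. The natural candidate, motivated by the standard \emph{Hopf module} argument, is the map $\Psi\:A\otimes D\to A\otimes D$ sending $a\otimes x\mapsto\sum_i a\chi(a_i)\otimes x_i$, where again $\psi_D(x)=\sum_i a_i\otimes x_i$; the antipode is inserted to undo the twist $\Phi$ applies. As a first step I will verify that $\Phi\circ\Psi=\Id=\Psi\circ\Phi$ on the whole of $A\otimes D$. This is a routine manipulation combining the coassociativity $(\Delta_A\otimes\Id_D)\circ\psi_D=(\Id_A\otimes\psi_D)\circ\psi_D$ with the antipode axioms $\phi_A\circ(\chi\otimes\Id_A)\circ\Delta_A = \phi_A\circ(\Id_A\otimes\chi)\circ\Delta_A$, both equal to $\aug$ followed by the unit of $A$, together with the counit axiom on $\psi_D$.

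The second step is to show that $\Phi$ restricts to a map $(A\square_{B}\k)\otimes D\to A\square_{B}D$ and that $\Psi$ restricts to the reverse direction, so together with the previous step we obtain a vector space bijection. Recall that $a\in A\square_B\k$ is characterised by the right $B$-coinvariance condition $(\Id_A\otimes\pi)\circ\Delta_A(a)=a\otimes 1_B$, where $\pi\:A\to B$ is the Hopf projection. The check for $\Phi$ reduces to applying the two equaliser maps defining $A\square_BD$ to $\sum_i aa_i\otimes x_i$ and comparing them, using that $\pi$ is an algebra map, the coinvariance of $a$, and the coassociativity of $\psi_D$ composed with $\pi$. The check for $\Psi$ is formally identical but additionally invokes the anti-coalgebra property of $\chi$.

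The final step is to verify that the resulting bijection respects the $A$-comodule algebra structures. Colinearity with respect to the diagonal $A$-coaction on the source and the coaction $\Delta_A\otimes\Id_D$ on the target is another diagram chase using coassociativity of $\psi_D$ and the fact that $\Delta_A$ is an algebra map. For multiplicativity, the comodule algebra identity $\psi_D(xy)=\psi_D(x)\psi_D(y)$ gives, in Sweedler notation $\psi_D(x)=\sum x_{(-1)}\otimes x_{(0)}$, the two expressions
\[
\Phi((a\otimes x)(b\otimes y))=\sum ab\,x_{(-1)}y_{(-1)}\otimes x_{(0)}y_{(0)}
\quad\text{and}\quad
\Phi(a\otimes x)\Phi(b\otimes y)=\sum a\,x_{(-1)}\,b\,y_{(-1)}\otimes x_{(0)}y_{(0)},
\]
whose equality is precisely the commutativity hypothesis on $A$. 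I expect the only real obstacle to be the bookkeeping in the second step: keeping the two coaction conventions on source and target straight, and ensuring $\pi$ is consistently applied inside the equaliser diagrams. No Hopf-theoretic input beyond coassociativity, the antipode axioms, and $\pi$ being a Hopf algebra map is required.
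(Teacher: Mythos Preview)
Your proposal is correct and follows the standard Hopf-module argument; the paper does not actually prove this lemma but cites Pengelley, and the antipode-based inverse you write down is exactly the formula the paper records explicitly after the generalisation in the following lemma. One small remark: the paper's comment on which side carries the diagonal coaction is stated the other way round from yours, but your choice (diagonal on the source, $\Delta_A\otimes\Id_D$ on the target) is the one that makes $\Phi$ colinear, so your version is fine.
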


Here the codomain has the diagonal $A$-comodule structure,
while the domain has the left $A$-comodule structure.

Here is an easily proved generalisation of this result.
\begin{lem}\label{lem:Splitting-Alg-gen}
Let $C$ be a commutative $B$-comodule algebra and let
$D$ be a commutative $A$-comodule algebra, then there
is an isomorphism of $A$-comodule algebras
\begin{equation}\label{eq:Splitting-Alg-gen}
(A\square_{B}C)\otimes D
\xrightarrow{\;\iso\;} A\square_{B}(C\otimes D),
\end{equation}
where the domain has the diagonal left $A$-coaction
and $C\otimes D$ has the diagonal left $B$-coaction.
\end{lem}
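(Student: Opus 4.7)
The plan is to mimic the proof of Lemma~\ref{lem:Splitting-Alg}, producing an explicit isomorphism $\theta$ together with an inverse, and then checking the $A$-comodule algebra structure is preserved. Writing $\psi_{D}d = \sum_j a_j\otimes d_j$ in Sweedler-type notation, the candidate map is
\[
\theta\Bigl(\Bigl(\sum_i x_i\otimes c_i\Bigr)\otimes d\Bigr) = \sum_{i,j}x_ia_j\otimes c_i\otimes d_j,
\]
with candidate inverse
\[
\theta^{-1}\Bigl(\sum_i y_i\otimes c_i\otimes d_i\Bigr) = \sum_{i,j}y_i\chi(a_j^{(i)})\otimes c_i\otimes d_j^{(i)},
\]
where $\psi_{D}d_i = \sum_j a_j^{(i)}\otimes d_j^{(i)}$.

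I would proceed in three steps. First, verify that $\theta$ and $\theta^{-1}$ actually land in the appropriate cotensor subspaces of $A\otimes C\otimes D$; this is a short Sweedler-notation calculation using the defining identity for $A\square_{B}(-)$ together with the compatibility of $\psi_{D}$ with the projection $\pi\colon A\to B$ (so that $\psi_{D}$ descends to the relevant $B$-coaction on $D$). Second, check that $\theta$ and $\theta^{-1}$ are mutually inverse, which reduces to the antipode identity $\phi_{A}\circ(\Id\otimes\chi)\circ\psi_{A} = \eta_{A}\circ\aug_{A}$ applied to each $a_j^{(i)}$, combined with coassociativity of $\psi_{D}$. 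Third, verify that $\theta$ is an $A$-comodule algebra map: the algebra property uses commutativity of $A$, $C$ and $D$ together with the fact that $\psi_{D}$ is an algebra map, while $A$-colinearity follows from coassociativity of $\psi_{D}$ and the fact that the coproduct $\psi_{A}$ is an algebra homomorphism.

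The only real obstacle is bookkeeping: several coactions must be tracked simultaneously and each intermediate map must be checked to respect the relevant cotensor subspace. Conceptually, $\theta$ is the restriction of the familiar ``untwisting'' automorphism of $A\otimes C\otimes D$ that absorbs the $A$-coaction on $D$ into the first tensor factor, with $\chi$ providing the reverse twist; the case $C=\k$ of this construction is precisely Lemma~\ref{lem:Splitting-Alg}, so the present statement may be viewed as a parametrised version of that argument, and no new ideas beyond careful Sweedler calculus are required.
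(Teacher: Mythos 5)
Your proposal is correct and is essentially the argument the paper intends: the paper itself only records the explicit map $\sum_r u_r\otimes v_r\otimes w\mapsto\sum_{r,i}u_ra_i\otimes v_r\otimes w_i$ and its $\chi$-twisted inverse, declaring the verification ``easily proved,'' and your $\theta$ and $\theta^{-1}$ are exactly these maps. The three verification steps you outline (landing in the cotensor subspace, mutual inversion via the antipode identity, and the comodule algebra property) are precisely the routine Sweedler-calculus checks being left to the reader.
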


Explicitly, on an element
\[
\sum_r u_r\otimes v_r\otimes x
\in (A\square_{B}C)\otimes D \subseteq A\otimes C\otimes D,
\]
the isomorphism has the effect
\[
\sum_r u_r\otimes v_r\otimes w
\longmapsto
\sum_r\sum_i u_ra_i\otimes v_r\otimes w_i,
\]
where $\psi_{D}w = \sum_ia_i\otimes w_i$ as above. Similarly
the inverse is given by
\[
\sum_rb_r\otimes y_r\otimes w_r
\longmapsto
\sum_r\sum_i b_r\chi(a_{r,i})\otimes v_r\otimes w_{r,i}.
\]

Now suppose that $H$ is a finite dimensional Hopf algebra.
If $K$ is a sub-Hopf algebra of $H$, it is well known that
$H$ is a free left or right $K$-module, i.e., $H\iso K\otimes U$
or $H\iso U\otimes K$ for a vector space $U$
(see~\cite{Montgomery}*{theorems~31.1.5 \& 3.3.1}). This
dualises as follows: If $L$ is a quotient Hopf algebra
of $H$, then $H$ is an extended left or right $L$-comodule,
i.e., $H\iso L\otimes V$ or $H\iso V\otimes L$ for
a vector space~$V$; in fact, $V=H\square_L\k$. More
generally, according to Margolis~\cite{Margolis}*{pages~193 \& 240},
if $H$ is a \emph{$P$-algebra} then a result of the
first kind holds for any finite dimensional sub-Hopf
algebra~$K$.

We need to make use of the \emph{finite dual} of a Hopf
algebra~$H$, namely
\[
H^{\mathrm{o}} =
\{ f\in\Hom_\k(H,\k) :
\text{$\exists\,I\lhd H$ such that $\codim I<\infty$
                               and $I\subseteq\ker f$} \}.
\]
Then $H^{\mathrm{o}}$ becomes a Hopf algebra with product
and coproduct obtained from the adjoints of the coproduct
and product of $H$. We will say that $H$ is a \emph{$P$-coalgebra}
if $H^{\mathrm{o}}$ is a $P$-algebra.

\begin{lem}\label{lem:P-coalg}
Suppose that $A$ is a commutative Hopf algebra which is
a $P$-coalgebra. If~$B$ is a finite dimensional quotient
Hopf algebra of $A$, then $A$ is an extended right
\emph{(}or left\emph{)} $B$-comodule, i.e., $A \iso W\otimes B$
\emph{(}or $A \iso B\otimes W$\emph{)} for some vector
space $W$, and in fact $W\iso A\square_B\k$ \emph{(}or
$W\iso\k\square_BA$\emph{)}.
\end{lem}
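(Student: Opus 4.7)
The plan is to dualise the hypothesis and invoke the theorem of Margolis quoted above. Since $B$ is finite-dimensional, the surjection $\pi\colon A\twoheadrightarrow B$ has dual $\pi^*\colon B^*\hookrightarrow A^*$, and the image lands in $A^{\mathrm{o}}$ because every $f\in B^*$ annihilates the finite-codimensional ideal $\ker\pi$. The Hopf algebra structure transfers, so $B^*$ is identified with a finite-dimensional sub-Hopf algebra of $A^{\mathrm{o}}$.

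The hypothesis that $A$ is a $P$-coalgebra is precisely that $A^{\mathrm{o}}$ is a $P$-algebra, so Margolis's theorem applied to $B^*\subseteq A^{\mathrm{o}}$ yields that $A^{\mathrm{o}}$ is free as a right (and equivalently as a left) $B^*$-module. It remains to translate this back across the duality into the claim that $A$ is an extended $B$-comodule. The mechanism is the standard correspondence, valid because $B$ is finite-dimensional, between right $B^*$-modules and left $B$-comodules via $a\cdot f=\sum f(a_{-1})a_0$, under which an extended left $B$-comodule $B\otimes V$ corresponds to a free right $B^*$-module; this last statement uses that $B$ itself is free of rank one as a right $B^*$-module (a result of Larson and Sweedler). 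The right $B^*$-action on $A$ arising from the $B$-coaction induced by $\pi$ is nothing other than the restriction to $B^*\subseteq A^{\mathrm{o}}$ of the natural right $A^{\mathrm{o}}$-action $a\cdot f=\sum f(a_{(1)})a_{(2)}$ on $A$.

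The main obstacle is executing this translation cleanly, since $A$ is typically infinite-dimensional and the evaluation $A\to(A^{\mathrm{o}})^*$ need not be an isomorphism, so the Margolis output about $A^{\mathrm{o}}$ viewed as a $B^*$-module by its own multiplication does not directly give freeness of $A$ as a $B^*$-module. I would bridge this using the local finiteness of coalgebras: any element of $A$ lies in a finite-dimensional subcoalgebra $A_0\subseteq A$ whose image $B_0=\pi(A_0)$ is a finite-dimensional sub-coalgebra of $B$, and the finite-dimensional comodule/module correspondence applies piecewise, so Margolis's freeness conclusion can be assembled by naturality in $A_0$ to exhibit $A$ as a free right $B^*$-module. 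The extended decomposition $A\cong W\otimes B$ as a $B$-comodule then follows, and reading off the coaction of the extended comodule identifies $W\cong A\square_B\k$. The left-handed version is symmetric, interchanging the roles of left and right throughout.
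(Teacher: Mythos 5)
Your overall route is exactly the one the paper intends: the paper gives no explicit proof of this lemma, presenting it as the dualisation of Margolis's theorem that a $P$-algebra is free over any finite-dimensional sub-Hopf algebra, and your first two paragraphs reproduce that reduction faithfully, including the correct identification of $B^*$ as a finite-dimensional sub-Hopf algebra of $A^{\mathrm{o}}$ and the use of the rank-one freeness of $B$ over $B^*$ (Larson--Sweedler) to match extended $B$-comodules with free $B^*$-modules.

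The gap is in your third paragraph. You rightly observe that Margolis's conclusion concerns $A^{\mathrm{o}}$ as a $B^*$-module under its own multiplication, whereas the lemma concerns $A$ as a $B^*$-module under the hit action dual to the coaction, and that the evaluation $A\to(A^{\mathrm{o}})^*$ need not be an isomorphism. But the proposed bridge does not close this: (i) a filtered union of free $B^*$-modules need not be free, so ``assembling by naturality'' over the finite-dimensional subcoalgebras $A_0$ is not a valid deduction without an extra input (for instance $B^*$ local and everything graded bounded-below, where freeness is detected by $\Tor_1(\k,-)$, which does commute with filtered colimits); (ii) the image $\pi(A_0)$ is only a subcoalgebra of $B$, not a sub-Hopf algebra, so there is no ``piecewise'' form of Margolis's theorem to apply; and (iii) no mechanism is given that transports information about the $B^*$-module $A^{\mathrm{o}}$ to the pieces $A_0$ of $A$. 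In the setting the paper actually uses ($A=\mathcal{A}_*$, $B=\mathcal{A}(n)_*$: graded, connected, of finite type) the difficulty you flag evaporates, because there $A$ \emph{is} the graded dual of $A^{\mathrm{o}}=\mathcal{A}^*$, and the graded dual of a bounded-below free module over the finite local graded algebra $B^*$ is precisely an extended $B$-comodule whose cogenerating space is $A\square_B\k$; compare Proposition~\ref{prop:A*-P-coalg}, which writes the resulting isomorphism down explicitly. Either restrict the lemma to that graded, finite-type setting (as the paper tacitly does) or replace the local-finiteness paragraph with an honest argument.
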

\begin{cor}\label{cor:P-coalg}
For any right $B$-comodule $L$ or left $B$-comodule $M$,
as vector spaces,
\[
A\square_B M \iso (A\square_B\k)\otimes M,
\quad
L\square_B A \iso L\otimes(\k\square_BA).
\]
These are isomorphisms of left or right $A$-comodules for
suitable comodule structures on the right hand sides.
\end{cor}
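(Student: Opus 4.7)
The plan is to derive each isomorphism from Lemma~\ref{lem:P-coalg} by reducing to the standard computation of a cotensor product with an extended comodule. For the first isomorphism, Lemma~\ref{lem:P-coalg} supplies a right $B$-comodule isomorphism $A \iso W \otimes B$ with $W = A\square_B\k$ and right $B$-coaction $\Id_W \otimes \Delta_B$ on the codomain. Substituting this into the cotensor product gives
\[
A\square_B M \iso (W \otimes B)\square_B M,
\]
so it remains to identify the right hand side with $W \otimes M$.

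For this, I would write down the natural candidate map $W \otimes M \to (W \otimes B)\square_B M$, $w \otimes m \mapsto \sum w \otimes m_{(-1)} \otimes m_{(0)}$, in Sweedler notation for $\psi_M$. That its image lies in the cotensor product is immediate from coassociativity of $\psi_M$, and an inverse is given by the counit contraction $w \otimes b \otimes m \mapsto \aug_B(b)\, w \otimes m$; both composites reduce to the counit axiom for the $B$-comodule $M$ (in one direction directly, in the other after applying $\Id_W \otimes \aug_B \otimes \Id_B \otimes \Id_M$ to the defining relation of the cotensor product). Combined with the first step this yields the desired vector space isomorphism $A\square_B M \iso (A\square_B\k) \otimes M$. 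The second isomorphism $L\square_B A \iso L \otimes (\k\square_B A)$ is handled by the mirror argument, starting from the left $B$-comodule form $A \iso B \otimes (\k\square_B A)$ also supplied by Lemma~\ref{lem:P-coalg}.

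To address the $A$-comodule claim, I would observe that $\Delta_A \otimes \Id_M$ carries $A\square_B M$ into $A \otimes (A\square_B M) \subseteq A \otimes A \otimes M$: this is a short Sweedler-style calculation that uses coassociativity of $\Delta_A$, the hypothesis $(\psi_A^r \otimes \Id_M) = (\Id_A \otimes \psi_M)$ on $A\square_B M$, and the fact that $\pi\colon A \to B$ is a Hopf algebra map. Hence $A\square_B M$ is a left $A$-subcomodule of $A \otimes M$, and the ``suitable'' structure on $(A\square_B\k)\otimes M$ is then simply the one transported through the vector space isomorphism; no intrinsic description is required by the statement. The only non-formal input is Lemma~\ref{lem:P-coalg}, which in turn rests on the $P$-coalgebra property of $A$; granted that, the corollary is an essentially routine manipulation of cotensor products, so I do not expect any serious obstacle beyond bookkeeping.
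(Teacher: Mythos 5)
Your proposal is correct and follows essentially the same route the paper indicates: the paper derives the corollary from Lemma~\ref{lem:P-coalg} via exactly the composite $(A\square_B\k)\otimes M\xrightarrow{\Id\otimes\psi_M}(A\square_B\k)\otimes B\otimes M\xrightarrow{\iso}A\otimes M$ displayed immediately after the statement, which is the same identification $(W\otimes B)\square_B M\iso W\otimes M$ (with counit contraction as inverse) that you spell out. The only difference is that you make the standard bookkeeping explicit where the paper leaves it implicit.
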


To understand the relevant $A$-comodule structure on
$(A\square_B\k)\otimes M$, note that there is an
isomorphism of left $A$-comodules
\[
\xymatrix{
(A\square_B\k)\otimes M
\ar[rr]_(.45){\Id\otimes\psi_M}\ar@/^19pt/[rrrr]^{\iso}
&& (A\square_B\k)\otimes B\otimes M\ar[rr]_(.6){\iso}
&& A\otimes M
}
\]
where the right hand factor is the isomorphism of
Lemma~\ref{lem:P-coalg}.

Crucially for our purposes, for a prime $p$, the
Steenrod algebra $\mathcal{A}^*$ is a $P$-algebra
in the sense of Margolis~\cite{Margolis}, i.e., it
is a union of finite sub-Hopf algebras. When $p=2$,
\[
\mathcal{A}^*=\bigcup_{n\geq0}\mathcal{A}(n)^*,
\]
and it follows from the preceding results that if~$n\geq0$,
$\mathcal{A}^*$ is free as a right or left $\mathcal{A}(n)^*$-module,
see~\cite{Margolis}*{pages~193 \& 240}. Dually,
$(\mathcal{A}_*)^{\mathrm{o}}=\mathcal{A}^*$ and
$\mathcal{A}_*$ is an extended $\mathcal{A}(n)_*$-comodule:
\begin{align}
\mathcal{A}_* &\iso
(\mathcal{A}_*\square_{\mathcal{A}(n)_*}\F_2)\otimes\mathcal{A}(n)_*,
                                     \label{eq:A*extendedright} \\
\mathcal{A}_* &\iso
\mathcal{A}(n)_*\otimes(\F_2\square_{\mathcal{A}(n)_*}\mathcal{A}_*).
                                     \label{eq:A*extendedleft}
\end{align}
Given this, we see that for any left $\mathcal{A}(n)_*$-comodule
$M_*$, as vector spaces
\begin{equation}\label{eq:Acotensor/A(n)}
\mathcal{A}_*\square_{\mathcal{A}(n)_*}M_* \iso
(\mathcal{A}_*\square_{\mathcal{A}(n)_*}\F_2)\otimes M_*.
\end{equation}
In fact this is also an isomorphism of left
$\mathcal{A}_*$-comodules.

Here is an explicit description of isomorphisms
of the type given by Lemma~\ref{lem:P-coalg}.
For $n\geq0$, we will use the function
\[
\mathrm{e}_n\:\N\to\N;
\quad
\mathrm{e}_n(i) =
\begin{dcases*}
2^{n+2-i} & if $1\leq i \leq n+2$, \\
1 & if $i \geq n+3$.
\end{dcases*}
\]
For an natural number $r$, write
\[
r = r'(n,i)\mathrm{e}_n(i) + r''(n,i)
\]
where $0\leq r''(n,i)<\mathrm{e}_n(i)$. We note that
\[
\mathcal{A}_*\square_{\mathcal{A}(n)_*}\F_2
=
\F_2[\zeta_1^{\mathrm{e}_n(1)},\zeta_2^{\mathrm{e}_n(2)},\zeta_3^{\mathrm{e}_n(3)},\ldots]
              \subseteq\mathcal{A}_*,
\]
and
\[
\mathcal{A}(n)_*
= \mathcal{A}_*/\!/(\mathcal{A}_*\square_{\mathcal{A}(n)_*}\F_2)
= \mathcal{A}_*/
(\zeta_1^{\mathrm{e}_n(1)},\zeta_2^{\mathrm{e}_n(2)},\zeta_3^{\mathrm{e}_n(3)},\ldots).
\]
We will indicate elements of $\mathcal{A}(n)_*$ by writing $\|{z}\|$
for the coset of~$z$ which is always chosen to be a sum of monomials
$\zeta_1^{s_1}\zeta_2^{s_2}\cdots\zeta_\ell^{s_\ell}$ with exponents
satisfying $0\leq s_i<\mathrm{e}_n(i)$.
\begin{prop}\label{prop:A*-P-coalg}
For $n\geq0$ there is an isomorphism of right $\mathcal{A}(n)_*$-comodules
\[
\mathcal{A}_*\xrightarrow{\;\iso\;}
(\mathcal{A}_*\square_{\mathcal{A}(n)_*}\F_2)\otimes\mathcal{A}(n)_*
\]
given on basic tensors by
\[
\zeta_1^{r_1}\zeta_2^{r_2}\cdots\zeta_\ell^{r_\ell}
\longleftrightarrow
\zeta_1^{r_1'(n,1)\mathrm{e}_n(1)}
\cdots\zeta_\ell^{r_\ell'(n,\ell)\mathrm{e}_n(\ell)}
\otimes
\left\|\zeta_1^{r_1''(n,1)}
\cdots\zeta_\ell^{r_\ell''(n,\ell)}\right\|.
\]
\end{prop}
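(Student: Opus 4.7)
The plan is to verify the formula in two steps: first showing it gives a bijection of vector-space bases, then checking it respects the right $\mathcal{A}(n)_*$-coactions.

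For the bijection, note that the monomial basis of $\mathcal{A}_* = \F_2[\zeta_1,\zeta_2,\ldots]$ is indexed by sequences of nonnegative exponents $(r_1,r_2,\ldots,r_\ell)$, and the division algorithm writes each $r_i$ uniquely as $r_i = r'_i(n,i)\mathrm{e}_n(i) + r''_i(n,i)$ with $0\leq r''_i(n,i)<\mathrm{e}_n(i)$. Since the cotensor subalgebra $V = \mathcal{A}_*\square_{\mathcal{A}(n)_*}\F_2$ equals $\F_2[\zeta_i^{\mathrm{e}_n(i)}:i\geq 1]$ and has monomial basis indexed by the first factors, while $\mathcal{A}(n)_*$ has the basis of classes $\|\zeta_1^{t_1}\cdots\|$ with $0\leq t_i<\mathrm{e}_n(i)$, the stated formula matches monomial bases bijectively.

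For the comodule compatibility, let $\pi\:\mathcal{A}_*\to\mathcal{A}(n)_*$ denote the quotient Hopf-algebra map, write $z$ for the canonical monomial representative of $\|z\|$, and consider the inverse $\Psi\:V\otimes\mathcal{A}(n)_*\to\mathcal{A}_*$, $v\otimes\|z\|\mapsto vz$. The right coactions are $\rho = (\Id\otimes\pi)\Delta$ on $\mathcal{A}_*$ and $\Id_V\otimes\Delta_{\mathcal{A}(n)_*}$ on $V\otimes\mathcal{A}(n)_*$. Using (i) $\rho(v) = v\otimes 1$ for $v\in V$ by definition of the cotensor, (ii) $\rho$ being an algebra map, and (iii) $(\pi\otimes\pi)\Delta = \Delta_{\mathcal{A}(n)_*}\pi$ because $\pi$ is a Hopf-algebra map, one computes $\rho\Psi(v\otimes\|z\|) = (v\otimes 1)\cdot\sum z_{(1)}\otimes\|z_{(2)}\|$ in Sweedler notation, to be compared with $(\Psi\otimes\Id)(v\otimes\Delta_{\mathcal{A}(n)_*}\|z\|)$, which selects canonical representatives of the Sweedler components.

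The main obstacle is this final comparison: the Sweedler components $z_{(1)}$ are typically not themselves canonical monomial representatives (their exponents may exceed $\mathrm{e}_n(i)$), so one must pass each through the division-algorithm decomposition and verify the contributions match term by term, using the $\F_2$-polynomial structure of $\mathcal{A}_*$. A cleaner alternative is to invoke Lemma~\ref{lem:P-coalg}, which already guarantees that \emph{some} right $\mathcal{A}(n)_*$-comodule isomorphism $\mathcal{A}_*\iso V\otimes\mathcal{A}(n)_*$ exists; the explicit formula is then pinned down uniquely as that isomorphism by being $V$-linear and inducing $\pi$ on the $\mathcal{A}(n)_*$-factor modulo the augmentation ideal of $V$, both of which are immediate.
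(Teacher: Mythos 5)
The paper states this proposition with no proof at all, so there is nothing to compare your argument against; I can only assess it on its own merits. Your first step (matching monomial bases via the division algorithm) is fine. The trouble is the second step: the obstacle you flag --- that the Sweedler components $z_{(1)}$ need not be canonical representatives --- is not a bookkeeping issue that a term-by-term check will dispose of, but a genuine obstruction to the literal formula being a comodule map. Take $n=1$, so $\mathrm{e}_1(1)=4$, $\mathrm{e}_1(2)=2$ and $V=\mathcal{A}_*\square_{\mathcal{A}(1)_*}\F_2=\F_2[\zeta_1^4,\zeta_2^2,\zeta_3,\ldots]$, and consider the canonical monomial $\zeta_1^3\zeta_2$, which the displayed formula sends to $1\otimes\|\zeta_1^3\zeta_2\|$. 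Since $\Delta\zeta_2=\zeta_2\otimes1+\zeta_1\otimes\zeta_1^2+1\otimes\zeta_2$, the coproduct $\Delta(\zeta_1^3\zeta_2)$ contains the term $\zeta_1^4\otimes\zeta_1^2$ (from $\zeta_1^3\otimes1$ times $\zeta_1\otimes\zeta_1^2$), so the right coaction $(\Id\otimes\pi)\Delta$ applied to $\zeta_1^3\zeta_2$ contains $\zeta_1^4\otimes\|\zeta_1^2\|\neq0$. Applying the formula to the left-hand factor produces the term $(\zeta_1^4\otimes\|1\|)\otimes\|\zeta_1^2\|$, which cannot occur in $1\otimes\Delta_{\mathcal{A}(1)_*}\|\zeta_1^3\zeta_2\|$, every term of which has $1$ in the $V$-slot, and it is not cancelled by any other term. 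So for the standard extended coaction $\Id_V\otimes\Delta$ on the target, the displayed correspondence is a vector-space isomorphism but not a morphism of right $\mathcal{A}(n)_*$-comodules; a genuine comodule isomorphism must carry correction terms (here $\zeta_1^3\zeta_2\mapsto1\otimes\|\zeta_1^3\zeta_2\|+\zeta_1^4\otimes\|\zeta_1^2\|$). Your deferred ``verify the contributions match term by term'' therefore cannot be completed as stated.

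Your proposed shortcut via Lemma~\ref{lem:P-coalg} does not repair this. That lemma guarantees that \emph{some} comodule isomorphism $\mathcal{A}_*\iso V\otimes\mathcal{A}(n)_*$ exists, but existence transfers nothing to this particular formula; and the two properties you claim pin the map down ($V$-linearity, and inducing $\pi$ on the $\mathcal{A}(n)_*$-factor modulo the augmentation ideal of $V$) do not determine a map uniquely --- one may add to the image of any canonical remainder an arbitrary element of (augmentation ideal of $V$)$\,\otimes\,\mathcal{A}(n)_*$ without violating either, and the correction term exhibited above is exactly of this type. Adding ``comodule map'' to the list of characterising properties makes the argument circular. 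The honest route to what the paper actually uses (that $\mathcal{A}_*$ is an extended right $\mathcal{A}(n)_*$-comodule) is: let $f\colon\mathcal{A}_*\to V$ be the linear projection killing every monomial outside $V$; since $V\otimes\mathcal{A}(n)_*$ with coaction $\Id\otimes\Delta$ is cofree on $V$, the map $(f\otimes\pi)\Delta$ is automatically a right comodule map, and it is an isomorphism because it is triangular with respect to your basis bijection, whose leading term is precisely the displayed formula. Either prove that version, or note explicitly that the stated formula only describes the associated graded of the true isomorphism.
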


We will also use the following result to construct algebraic
maps in lieu of geometric ones. The proof is a straightforward
generalisation of a standard one for the case where~$B=\k$.
\begin{lem}\label{lem:Comodmaps}
Suppose that $M$ is a left $A$-comodule and $N$ is a left
$B$-comodule. Then there is a natural isomorphism
\[
\Comod_B(M,N)\xrightarrow{\;\iso\;} \Comod_A(M,A\square_BN);
\quad
f \mapsto \tilde{f},
\]
where $\tilde{f}$ is the unique factorisation of\/
$(\Id\otimes f)\psi_M$ through $A\square_B N$.
\[
\xymatrix@R=0.6cm{
M\ar[dd]_{\psi_M}\ar[rr]^(.45){\tilde{f}}\ar[ddrr]_{(\Id\otimes f)\psi_M}
&& A\square_B N \ar@{^{(}->}[dd] \\
&& \\
A\otimes M\ar[rr]_{\Id\otimes f} && A\otimes N
}
\]
Furthermore, if $M$ is an $A$-comodule algebra and $N$
is a $B$-comodule algebra, then if $f$ is an algebra
homomorphism, so is $\tilde{f}$.
\end{lem}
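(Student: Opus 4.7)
The plan is a straightforward adjunction argument, showing that $A\square_B(-)$ is right adjoint to the restriction functor from $A$-comodules to $B$-comodules. Given a $B$-colinear $f\:M\to N$, I would first define $\tilde f$ as the composite $(\Id_A\otimes f)\circ\psi_M\:M\to A\otimes N$ and check that this factors through $A\square_B N$. Unwinding the equalizer defining $A\square_BN$, this amounts to the equation
\[
(\Id_A\otimes\psi_N)\circ(\Id_A\otimes f)\circ\psi_M
= (\Id_A\otimes\pi\otimes\Id_N)\circ(\Delta_A\otimes\Id_N)\circ(\Id_A\otimes f)\circ\psi_M,
\]
where $\pi\:A\to B$ is the quotient map. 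Pushing $f$ to the right using $B$-colinearity ($\psi_N\circ f=(\pi\otimes f)\circ\psi_M$) converts the left side into $(\Id_A\otimes\pi\otimes f)\circ(\Id_A\otimes\psi_M)\circ\psi_M$, and the right side is clearly $(\Id_A\otimes\pi\otimes f)\circ(\Delta_A\otimes\Id_M)\circ\psi_M$; the two then agree by coassociativity of $\psi_M$. A virtually identical computation, using coassociativity once, shows that $\tilde f$ is $A$-colinear.

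For the inverse direction, I would send an $A$-colinear $g\:M\to A\square_BN$ to $(\aug_A\otimes\Id_N)\circ g$, where $\aug_A$ is the counit and $A\otimes N$ is tacitly identified with its image under $\k\otimes N\iso N$. That this is $B$-colinear follows by pre-composing $A$-colinearity of $g$ with $\pi\otimes\Id$ and applying $\aug_A$. The two constructions are mutual inverses: in one direction, $(\aug_A\otimes\Id_N)\circ(\Id_A\otimes f)\circ\psi_M=f\circ(\aug_A\otimes\Id_M)\circ\psi_M=f$ by the counit axiom for $\psi_M$; in the other, $(\Id_A\otimes((\aug_A\otimes\Id_N)\circ g))\circ\psi_M=(\Id_A\otimes\aug_A\otimes\Id_N)\circ(\Id_A\otimes g)\circ\psi_M$, and $A$-colinearity of $g$ replaces $(\Id_A\otimes g)\circ\psi_M$ with $(\Delta_A\otimes\Id_N)\circ g$, so the counit axiom for $A$ collapses this to $g$. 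Naturality is immediate from the naturality of the building blocks $\psi_M$ and $\Id_A\otimes(-)$.

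Finally, when $M$ and $N$ are comodule algebras and $f$ is an algebra homomorphism, the factorization $\tilde f=(\Id_A\otimes f)\circ\psi_M$ is a composite of algebra maps: $\psi_M$ is multiplicative because $M$ is a comodule algebra, and $\Id_A\otimes f$ is multiplicative as a tensor product of algebra maps. The induced algebra structure on $A\square_BN$ is the one inherited from the tensor product algebra $A\otimes N$, so $\tilde f$ lands there as an algebra map. There is no real obstacle here beyond organizing the diagrams; the one bookkeeping point to watch is the distinction between the $A$-coaction on $M$ and the restricted $B$-coaction used when testing $B$-colinearity of $f$.
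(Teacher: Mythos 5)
Your proof is correct and is exactly the standard cotensor--corestriction adjunction argument that the paper itself invokes without writing out (it only remarks that the proof is "a straightforward generalisation of a standard one for the case $B=\k$"). The only half-line worth adding is that $B$-colinearity of $(\aug_A\otimes\Id_N)\circ g$ uses not just $A$-colinearity of $g$ but also the equalizer condition defining $A\square_BN$ (to identify $(\pi\otimes\Id_N)\circ g$ with $\psi_N\circ(\aug_A\otimes\Id_N)\circ g$); everything else, including the multiplicative statement, is fine.
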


As an example of the multiplicative version of this result,
suppose that $M$ is an $A$-comodule algebra which is augmented.
Then there is a composite homomorphism of $B$-comodule algebras
$\alpha\:M \to \k \to N$ giving rise homomorphism of $A$-comodule
algebras
\[
\tilde{\alpha}\:M\to A\square_BN;
\quad
\tilde{\alpha}(x) = a\otimes 1,
\]
where $\psi_M(x) = a\otimes1 + \cdots + 1\otimes x$.

\section{The homology of $Mj_r$ for $r=1,2,3$}\label{sec:H*Mjr}

Now we analyse the the specific cases for $H_*(Mj_r)$
for $r=1,2,3$. Since some of the details differ in
each case we treat these separately. In each case
there is a commutative diagram of commutative
$\mathcal{A}_*$-comodule algebras
\begin{equation}\label{eq:H*Mjr}
\xymatrix@C=.6cm{
&& \\
H_*(Mj_r)\ar@/_15pt/[dr]\ar@/_20pt/[ddr]_(.43){\psi}\ar@/^29pt/@{-->}[rr]^{(\iso)}
& (\mathcal{A}_*\square_{\mathcal{A}(r-1)_*}\F_2)\otimes H_*(Mj_r)
\ar[r]^(.46){\quo}
& (\mathcal{A}_*\square_{\mathcal{A}(r-1)_*}\F_2)\otimes H_*(Mj_r)/I_r \\
& \mathcal{A}_*\square_{\mathcal{A}(r-1)_*} H_*(Mj_r)\ar[u]^{\iso}
\ar[r]^(.46){\quo}\ar@{^{(}->}[d]
&\mathcal{A}_*\square_{\mathcal{A}(r-1)_*} H_*(Mj_r)/I_r\ar@{^{(}->}[d]\ar[u]^{\iso} \\
&\mathcal{A}_*\otimes H_*(Mj_r)
\ar[r]^(.46){\quo}
&\mathcal{A}_*\otimes H_*(Mj_r)/I_r
}
\end{equation}
in which $I_r\lhd H_*(Mj_r)$ is a certain
$\mathcal{A}(r-1)_*$-comodule ideal. In each case
the proof involves showing that the dashed arrow
is an isomorphism.

\subsection{The homology of $Mj_1$}\label{subsec:H*Mj1}

By Theorem~\ref{thm:H*Mjr},
\begin{equation}\label{eq:H*(Mj1)}
H_*(Mj_1) =
\F_2[\dlQ^Ix_2,\dlQ^Jx_3 :
\text{\rm $I,J$ admissible, $\exc(I)>2$, $\exc(J)>3$}],
\end{equation}
where the left $\mathcal{A}_*$-coaction is determined
by
\[
\psi x_2 = 1\otimes x_2 + \zeta_1^2\otimes1,
\quad
\psi x_3 = 1\otimes x_3 + \zeta_1\otimes x_2 + \zeta_2\otimes1.
\]
To calculate the coaction on the other generators
$\dlQ^Ix_2$ and $\dlQ^Jx_3$ we follow~\cite{Nishida}
and use the right coaction
\[
\tpsi\:H_*(Mj_1)\to H_*(Mj_1)\otimes\mathcal{A}_*;
\quad
\tpsi(z) = \sum_i z_i\otimes\chi(\alpha_i),
\]
where $\psi(z) = \sum_i\alpha_i\otimes z_i$ and $\chi$
is the antipode of $\mathcal{A}_*$. So
\[
\tpsi x_2 = x_2\otimes1 + 1\otimes\zeta_1^2,
\quad
\tpsi x_3 = x_3\otimes1 + x_2\otimes\zeta_1 + 1\otimes\xi_2.
\]
In general, if~$z$ has degree~$m$, then
\begin{equation}\label{eq:tpsiQ^rz_n}
\tpsi\dlQ^r z =
\sum_{m\leq k\leq r}
\dlQ^k(\tpsi z)\biggl[\zeta(t)^k\biggr]_{t^r}
=
\sum_{m\leq k\leq r}
\dlQ^k(\tpsi z)\left[\biggl(\frac{\zeta(t)}{t}\biggr)^k\right]_{t^{r-k}}.
\end{equation}

By~\eqref{eq:tpsiQ^rz_n},
\begin{align*}
\tpsi\dlQ^4 x_3 & =
\dlQ^3(x_3\otimes1 + x_2\otimes\zeta_1 + 1\otimes\xi_2)
   \left[\biggl(\frac{\zeta(t)}{t}\biggr)^3\right]_{t} \\
& \ph{aaaaaaaaaaaaaaaaa}
+ \dlQ^4(x_3\otimes1 + x_2\otimes\zeta_1 + 1\otimes\xi_2) \\
& =
x_3^2\otimes\zeta_1 + x_2^2\otimes\zeta_1^3 + 1\otimes\zeta_1\xi_2^2 \\
& \ph{aaaaaaaaaaaaaaaaa}
+ \dlQ^4x_3\otimes1
+ (\dlQ^3x_2\otimes\zeta_1^2 + x_2^2\otimes\dlQ^2\zeta_1)
+ 1\otimes\dlQ^4\xi_2 \\
& =
x_3^2\otimes\zeta_1 + x_2^2\otimes\zeta_1^3
+ 1\otimes\zeta_1\xi_2^2 + \dlQ^4x_3\otimes1 \\
& \ph{aaaaaaaaaaaaaaaaa}
+ \dlQ^3x_2\otimes\zeta_1^2 + x_2^2\otimes\zeta_2
+ 1\otimes(\xi_3+\zeta_1\xi_2^2) \\
& =
(\dlQ^4x_3\otimes1 + x_3^2\otimes\zeta_1 + x_2^2\otimes\xi_2
 + 1\otimes\xi_3) + \dlQ^3x_2\otimes\zeta_1^2.
\end{align*}
We also have
\[
\tpsi\dlQ^3x_2 = \dlQ^3x_2\otimes1,
\quad
\tpsi\dlQ^5x_2 = \dlQ^5x_2\otimes + \dlQ^3x_2\otimes\zeta_1^2.
\]
Combining these we obtain
\begin{align}
\tpsi(\dlQ^4x_3 + \dlQ^5x_2)
&= (\dlQ^4x_3 + \dlQ^5x_2)\otimes1
   + x_3^2\otimes\zeta_1 + x_2^2\otimes\xi_2+1\otimes\xi_3,
\label{eq:Mj1-psix7} \\
\intertext{or equivalently,}
\psi(\dlQ^4x_3 + \dlQ^5x_2)
&= 1\otimes(\dlQ^4x_3 + \dlQ^5x_2) + \zeta_1\otimes x_3^2
    + \zeta_2\otimes x_2^2 + \zeta_3\otimes 1.
\label{eq:Mj1-tpsix7}
\end{align}
We will consider the sequence of elements $X_{1,1}$ and
$X_{1,s}\in H_{2^{s}-1}(Mj_1)$ ($s\geq2$) defined by
\[
X_{1,s} =
\begin{dcases*}
x_2 & if $s=1$, \\
x_3 & if $s=2$, \\
\dlQ^4x_3 + \dlQ^5x_2 & if $s=3$, \\
\dlQ^{(2^{s-1},\ldots,2^4,2^3)}(\dlQ^4x_3 + \dlQ^5x_2)
  = \dlQ^{2^{s-1}}X_{1,s-1} & if $s\geq4$,
\end{dcases*}
\]
where
$\dlQ^{(i_1,i_2,\ldots,i_\ell)}
       =\dlQ^{i_1}\dlQ^{i_2}\cdots\dlQ^{i_\ell}$.
We claim the $X_{1,s}$ have the following right
and left coactions:
\begin{align}
\tpsi X_{1,s} &=
X_{1,s}\otimes1
+
X_{1,s-1}^2\otimes\zeta_1
+\cdots+
X_{1,3}^{2^{s-3}}\otimes\xi_{s-3} \label{eq:X1s-coaction} \\
& \qquad\qquad\qquad\qquad\qquad
+ X_{1,2}^{2^{s-2}}\otimes\xi_{s-2}
+ X_{1,1}^{2^{s-2}}\otimes\xi_{s-1}
+ 1\otimes\xi_{s}, \notag \\
\psi X_{1,s} &=
1\otimes X_{1,s}
+
\zeta_1\otimes X_{1,s-1}^2
+\cdots+
\zeta_{s-3}\otimes X_{1,3}^{2^{s-3}} \label{eq:X1s-leftcoaction} \\
& \qquad\qquad\qquad\qquad\qquad
+ \zeta_{s-2}\otimes X_{1,2}^{2^{s-2}}
+ \zeta_{s-1}\otimes X_{1,1}^{2^{s-2}}
+ \zeta_{s}\otimes 1.  \notag
\end{align}

To prove these, we use induction on~$s$, where the early
cases~$s=1,2,3$ are known already. For the inductive step,
assume that~\eqref{eq:X1s-coaction} holds for some $s\geq3$.
Then
\begin{align*}
\tpsi X_{1,s+1} =& \tpsi\dlQ^{2^s}X_{1,s}
= (\tpsi X_{1,s})^2\zeta_1 + \dlQ^{2^s}(\tpsi X_{1,s}) \\
=&
X_{1,s}^2\otimes\zeta_1
+
X_{1,s-1}^{2^2}\otimes\zeta_1^3
+\cdots+
X_{1,3}^{2^{s-2}}\otimes\xi_{s-3}^2\zeta_1 \\
&\ph{aaaaaaaaaaaaaaaaa} + X_{1,2}^{2^{s-1}}\otimes\xi_{s-2}^2\zeta_1
+ X_{1,1}^{2^{s-1}}\otimes\xi_{s-1}^2\zeta_1
+ 1\otimes\xi_{s}^2\zeta_1 \\
&
+ \dlQ^{2^s}
\biggl(X_{1,s}\otimes1
+
X_{1,s-1}^2\otimes\zeta_1
+\cdots+
X_{1,3}^{2^{s-3}}\otimes\xi_{s-3} \biggr. \\
&\ph{aaaaaaaaaaaaaaaaaaaaaaaaaaaaa}
\biggl. + X_{1,2}^{2^{s-2}}\otimes\xi_{s-2}
+ X_{1,1}^{2^{s-2}}\otimes\xi_{s-1}
+ 1\otimes\xi_{s}\biggr) \\
=& X_{1,s}^2\otimes\zeta_1
+
X_{1,s-1}^{2^2}\otimes\zeta_1^3
+\cdots+
X_{1,3}^{2^{s-2}}\otimes\xi_{s-3}^2\zeta_1
+ X_{1,2}^{2^{s-1}}\otimes\xi_{s-2}^2\zeta_1 \\
&\ph{aaaaaaaaaaaaaaaaaaaaaaaaaaaaa}
+ X_{1,1}^{2^{s-1}}\otimes\xi_{s-1}^2\zeta_1
+ 1\otimes\xi_{s}^2\zeta_1 \\
&\quad
+ \dlQ^{2^s}X_{1,s}\otimes1
+
X_{1,s-1}^{2^2}\otimes\dlQ^{2}\zeta_1
+\cdots+
X_{1,3}^{2^{s-2}}\otimes\dlQ^{2^{s-3}}\xi_{s-3} \\
&\ph{aaaaaaaaaaa}
+ X_{1,2}^{2^{s-1}}\otimes\dlQ^{2^{s-2}}\xi_{s-2}
+ X_{1,1}^{2^{s-1}}\otimes\dlQ^{2^{s-1}}\xi_{s-1}
+ 1\otimes\dlQ^{2^{s}}\xi_{s} \\
=& X_{1,s}^2\otimes\zeta_1
+
X_{1,s-1}^{2^2}\otimes\zeta_1^3
+\cdots+
X_{1,3}^{2^{s-2}}\otimes\xi_{s-3}^2\zeta_1
+ X_{1,2}^{2^{s-1}}\otimes\xi_{s-2}^2\zeta_1 \\
&\ph{aaaaaaaaaaaaaaaaaaaaaaaaaaaaa}
+ X_{1,1}^{2^{s-1}}\otimes\xi_{s-1}^2\zeta_1
+ 1\otimes\xi_{s}^2\zeta_1 \\
&\quad
+ X_{1,s+1}\otimes1
+
X_{1,s-1}^{2^2}\otimes(\xi_{2}+\zeta_1^3)
+\cdots+
X_{1,3}^{2^{s-2}}\otimes(\xi_{s-2}+\xi_{s-3}^2\zeta_1) \\
&\ph{aaaaaaaaa}
+ X_{1,2}^{2^{s-1}}\otimes(\xi_{s-1}+\xi_{s-2}^2\zeta_1)
+ X_{1,1}^{2^{s-1}}\otimes(\xi_{s}+\xi_{s-1}^2\zeta_1)
+ 1\otimes(\xi_{s+1}+\xi_{s}^2\zeta_1)  \\
=& X_{1,s+1}\otimes1 + X_{1,s}^2\otimes\zeta_1
+
X_{1,s-1}^{2^2}\otimes\xi_2
+\cdots+
X_{1,3}^{2^{s-2}}\otimes\xi_{s-2} \\
&\ph{aaaaaaaaaaaaaaaaaaaaaaaaaaaa}
+ X_{1,2}^{2^{s-1}}\otimes\xi_{s-1}
+ X_{1,1}^{2^{s-1}}\otimes\xi_{s}
+ 1\otimes\xi_{s+1},
\end{align*}
giving the result for $s+1$. Here for terms
of form $\dlQ^{|u|+|v|+1}(u\otimes v)$ we
have
\[
\dlQ^{|u|+|v|+1}(u\otimes v)
= \dlQ^{|u|+1}u\otimes\dlQ^{|v|}v
             + \dlQ^{|u|+1}u\otimes\dlQ^{|v|+1}v
= \dlQ^{|u|+1}u\otimes v^2 + u^2\otimes\dlQ^{|v|+1}v
\]
by the Cartan formula and unstable conditions.

Under the homomorphism $\rho\:H_*(Mj_1)\to\mathcal{A}_*$
induced by the orientation $Mj_1\to H\F_2$, we have
\[
\rho(x_2)=\zeta_1^2, \quad \rho(x_3)=\zeta_2,
\quad \rho(X_{1,s})=\zeta_s \;\;\; (s\geq3).
\]
Also
\[
\rho(\dlQ^3 x_2) = \dlQ^3(\rho x_2) = \dlQ^3(\zeta_1^2) = 0,
\]
and for each admissible monomial~$I$,
$\rho(\dlQ^Ix_2)\in\mathcal{A}_*$ is a square.

This shows that the restriction of $\rho$ to the subalgebra
generated by the $X_{1,s}$ is an isomorphism of
$\mathcal{A}_*$-comodule algebras
\[
\F_2[X_{1,s}:s\geq1]\xrightarrow{\;\iso\;}
\mathcal{A}_*\square_{\mathcal{A}(0)_*}\F_2 \subseteq\mathcal{A}_*,
\]
where
\[
\mathcal{A}(0)_* =
\mathcal{A}_*/\!/\F_2[\zeta_1^2,\zeta_2,\zeta_3,\ldots],
\quad
\mathcal{A}_*\square_{\mathcal{A}(0)_*}\F_2 =
\F_2[\zeta_1^2,\zeta_2,\zeta_3,\ldots]\subseteq\mathcal{A}_*.
\]


In the algebra $H_*(Mj_1)$, the regular sequence
$X_{1,s}$ ($s\geq1$) generates an ideal
\[
I_1 = ( X_{1,s} : s\geq1 ) \lhd H_*(Mj_1).
\]
This is not an $\mathcal{A}_*$-subcomodule since for
example,
\[
\psi X_{1,3} = \psi(\dlQ^4x_3 + \dlQ^5x_2)
= (1\otimes X_{1,3} + \zeta_1\otimes X_{1,2}^2
      + \zeta_2\otimes X_{1,1}^2) + \zeta_3\otimes1.
\]
However under the induced $\mathcal{A}(0)_*$-coaction
\[
\psi'\: H_*(Mj_1)\to\mathcal{A}(0)_*\otimes H_*(Mj_1),
\]
the last term becomes trivial, in fact
\[
\psi' X_{1,3}
    = 1\otimes X_{1,3} + \zeta_1\otimes X_{1,2}^2,
\]
where we identify elements of $\mathcal{A}(0)_*$
with representatives in $\mathcal{A}_*$. More
generally, by~\eqref{eq:X1s-leftcoaction}, for
$s\geq2$,
\[
\psi' X_{1,s} = 1\otimes X_{1,s} + \zeta_1\otimes X_{1,s-1}^2.
\]
It follows that $I_1$ is an $\mathcal{A}(0)_*$-invariant
ideal.
\begin{prop}\label{prop:H*Mj1-extended}
There is an isomorphism of commutative $\mathcal{A}_*$-comodule
algebras
\[
H_*(Mj_1) \xrightarrow{\;\iso\;}
\mathcal{A}_*\square_{\mathcal{A}(0)_*} H_*(Mj_1)/I_1.
\]
\end{prop}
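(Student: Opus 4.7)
The plan is to construct the dashed arrow of diagram~\eqref{eq:H*Mjr} using the lifting principle of Lemma~\ref{lem:Comodmaps}, and then to verify bijectivity by exploiting the tensor decomposition provided by Lemma~\ref{lem:Splitting-Alg}. First I would note that the computation $\psi'X_{1,s}=1\otimes X_{1,s}+\zeta_1\otimes X_{1,s-1}^2$ preceding the proposition shows that $I_1\lhd H_*(Mj_1)$ is an $\mathcal{A}(0)_*$-invariant ideal, so the quotient map $\pi\:H_*(Mj_1)\to H_*(Mj_1)/I_1$ is a homomorphism of commutative $\mathcal{A}(0)_*$-comodule algebras. Applying Lemma~\ref{lem:Comodmaps} with $A=\mathcal{A}_*$ and $B=\mathcal{A}(0)_*$ produces the canonical homomorphism of commutative $\mathcal{A}_*$-comodule algebras
\[
\tilde{\pi}=(\Id\otimes\pi)\o\psi\:H_*(Mj_1)\longrightarrow\mathcal{A}_*\square_{\mathcal{A}(0)_*}H_*(Mj_1)/I_1,
\]
which is precisely the dashed arrow of~\eqref{eq:H*Mjr}.

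To analyse bijectivity, I would apply Lemma~\ref{lem:Splitting-Alg} with $D=H_*(Mj_1)/I_1$ to identify the codomain with $(\mathcal{A}_*\square_{\mathcal{A}(0)_*}\F_2)\otimes H_*(Mj_1)/I_1$ as $\mathcal{A}_*$-comodule algebras. The preceding discussion already exhibits $\F_2[X_{1,s}:s\geq1]\subseteq H_*(Mj_1)$ as an $\mathcal{A}_*$-comodule subalgebra mapping isomorphically onto the left tensor factor $\mathcal{A}_*\square_{\mathcal{A}(0)_*}\F_2$ under the orientation $\rho$. Reading off~\eqref{eq:X1s-leftcoaction} and using that every summand of $\psi X_{1,s}$ other than $\zeta_s\otimes 1$ has second tensor factor in $I_1$, I would check that $\tilde{\pi}(X_{1,1})=\zeta_1^2\otimes1$, $\tilde{\pi}(X_{1,2})=\zeta_2\otimes1$, and $\tilde{\pi}(X_{1,s})=\zeta_s\otimes1$ for $s\geq3$. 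Hence $\tilde{\pi}$ restricts on $\F_2[X_{1,s}]$ to the previously established isomorphism onto the left factor.

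It then remains to show that $H_*(Mj_1)\iso\F_2[X_{1,s}:s\geq1]\otimes H_*(Mj_1)/I_1$ as commutative algebras, i.e., that the $X_{1,s}$ can be taken as part of a system of polynomial generators for $H_*(Mj_1)$. Once this is in place, a direct comparison of leading terms in the Dyer--Lashof filtration identifies $\tilde{\pi}$ with the tensor product of the isomorphism $\F_2[X_{1,s}]\xrightarrow{\iso}\mathcal{A}_*\square_{\mathcal{A}(0)_*}\F_2$ and the identity on $H_*(Mj_1)/I_1$, and so shows it is an isomorphism. The main obstacle is this algebraic bookkeeping: $X_{1,1}=x_2$ and $X_{1,2}=x_3$ are manifestly Dyer--Lashof generators, $X_{1,3}=\dlQ^4 x_3+\dlQ^5 x_2$ can be substituted for $\dlQ^4 x_3$ in the generating set, and for $s\geq 4$ one must expand the iterated operation $\dlQ^{2^{s-1}}\dlQ^{2^{s-2}}\cdots$ in the definition of $X_{1,s}$ (applying Adem relations as needed) to verify that $X_{1,s}$ differs from a single admissible Dyer--Lashof generator only by decomposables, so that a change of polynomial basis including all the $X_{1,s}$ exists.
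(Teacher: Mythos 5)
Your proposal is correct and follows essentially the same route as the paper: the dashed arrow of~\eqref{eq:H*Mjr} is the lift $(\Id\otimes\pi)\circ\psi$ furnished by Lemma~\ref{lem:Comodmaps}, the codomain is identified with $(\mathcal{A}_*\square_{\mathcal{A}(0)_*}\F_2)\otimes H_*(Mj_1)/I_1$ via Lemma~\ref{lem:Splitting-Alg}, and the images $\tilde{\pi}(X_{1,s})=\zeta_s\otimes1$ (together with the fact that the $X_{1,s}$ extend to a system of polynomial generators) force bijectivity. The only cosmetic difference is that the paper closes with a surjectivity-plus-Poincar\'e-series count where you argue by an explicit change of polynomial basis and leading-term comparison; these amount to the same bookkeeping.
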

\begin{proof}
Taking $r=1$, from~\eqref{eq:H*Mjr} we obtain
a commutative diagram of commutative
$\mathcal{A}_*$-comodule algebras
\[
\xymatrix@C=.7cm{
&& \\
H_*(Mj_1)\ar@/_15pt/[dr]\ar@/_20pt/[ddr]_(.43){\psi}\ar@/^29pt/@{-->}[rr]
& (\mathcal{A}_*\square_{\mathcal{A}(0)_*}\F_2)\otimes H_*(Mj_1)
\ar[r]^(.46){\quo}
& (\mathcal{A}_*\square_{\mathcal{A}(0)_*}\F_2)\otimes H_*(Mj_1)/I_1 \\
& \mathcal{A}_*\square_{\mathcal{A}(0)_*} H_*(Mj_1)\ar[u]^{\iso}
\ar[r]^(.46){\quo}\ar@{^{(}->}[d]
&\mathcal{A}_*\square_{\mathcal{A}(0)_*} H_*(Mj_1)/I_1\ar@{^{(}->}[d]\ar[u]^{\iso} \\
&\mathcal{A}_*\otimes H_*(Mj_1)
\ar[r]^(.46){\quo}
&\mathcal{A}_*\otimes H_*(Mj_1)/I_1
}
\]
and furthermore
\begin{align*}
\psi X_{1,1} &= \zeta_1^2\otimes1 + 1\otimes X_{1,1}, \\
\psi X_{1,2} &=
\zeta_2\otimes1 + \zeta_1\otimes X_{1,1} + 1\otimes X_{1,1}, \\
\psi X_{1,s} &=
\zeta_{s+1}\otimes1 + \cdots + 1\otimes X_{1,s}
\qquad(s\geq3),
\end{align*}
giving
\[
\quo\psi X_{1,1} = \zeta_1^2\otimes1,
\quad
\quo\psi X_{1,2} = \zeta_2\otimes1,
\quad
\quo\psi X_{1,s} =
\zeta_{s+1}\otimes1 + \cdots.
\]
The latter form part of a set of polynomial generators
for the polynomial ring
\[
\mathcal{A}_*\otimes H_*(Mj_1)/I_1\iso
(\mathcal{A}_*\square_{\mathcal{A}(0)_*}\F_2)\otimes H_*(Mj_1)/I_1.
\]
Now a straightforward argument shows that the dashed arrow
is surjective; but as the Poincar\'e series of $H_*(Mj_1)$
and $(\mathcal{A}_*\square_{\mathcal{A}(0)_*}\F_2)\otimes H_*(Mj_1)/I_1$
are equal, it is actually an isomorphism. Therefore
\[
H_*(Mj_1) \iso
\mathcal{A}_*\square_{\mathcal{A}(0)_*} H_*(Mj_1)/I_1.
\qedhere
\]
\end{proof}
\begin{rem}\label{rem:H*Mj1-extended}
For the purposes of proving such a result, we might
as well have set $X_{1,3} = \dlQ^4x_3$ and
\[
X_{1,s} = \dlQ^{2^{s-1}}X_{1,s-1} \quad (s\geq3),
\]
since
\[
\psi'X_{1,3} = 1\otimes X_{1,3} + \zeta_1\otimes x_3^2
\]
and so on. However, the cases of $Mj_2$ and $Mj_3$ will
require modifications similar to the ones we have used
above which give an indication of the methods required.
\end{rem}

We have the following splitting result.
\begin{prop}\label{prop:H*Mj1-splitting}
There is a splitting of $\mathcal{A}_*$-comodule algebras
\[
\xymatrix{
\mathcal{A}_*\square_{\mathcal{A}(0)_*}\F_2\ar[rr]^{\iso}\ar[dr]
&& \mathcal{A}_*\square_{\mathcal{A}(0)_*}\F_2 \\
& H_*(Mj_1)\ar[ur] &
}
\]
where $H_*(Mj_1)\to H_*(H\Z)=\mathcal{A}_*\square_{\mathcal{A}(0)_*}\F_2$
is induced by the \Einfty orientation $Mj_1\to H\Z$.
\end{prop}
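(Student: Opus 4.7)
The plan is to build the splitting from the polynomial subalgebra $B_1 = \F_2[X_{1,s} : s \geq 1] \subseteq H_*(Mj_1)$ already identified in Subsection~\ref{subsec:H*Mj1}. That subsection verifies two facts that together essentially give the result: first, the coaction formulas~\eqref{eq:X1s-leftcoaction} show that $B_1$ is an $\mathcal{A}_*$-subcomodule algebra of $H_*(Mj_1)$; second, the restriction of the orientation map $\rho\:H_*(Mj_1)\to\mathcal{A}_*$ to $B_1$ is an isomorphism of $\mathcal{A}_*$-comodule algebras
\[
\rho|_{B_1}\:B_1 \xrightarrow{\;\iso\;} \mathcal{A}_*\square_{\mathcal{A}(0)_*}\F_2
\subseteq \mathcal{A}_*,
\]
sending $X_{1,1}\mapsto\zeta_1^2$, $X_{1,2}\mapsto\zeta_2$, and $X_{1,s}\mapsto\zeta_s$ for $s\geq3$.

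Given this, I would define the splitting $s\:\mathcal{A}_*\square_{\mathcal{A}(0)_*}\F_2 \to H_*(Mj_1)$ to be the composite $(\rho|_{B_1})^{-1}$ followed by the inclusion $B_1\hookrightarrow H_*(Mj_1)$. Both constituents are $\mathcal{A}_*$-comodule algebra maps, so $s$ is one as well.

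It then remains to check that composing $s$ with the map $H_*(Mj_1)\to H_*(H\Z)$ induced by the \Einfty orientation $Mj_1\to H\Z$ recovers the identity on $\mathcal{A}_*\square_{\mathcal{A}(0)_*}\F_2$. This reduces to the observation that the orientation $Mj_1\to H\Z$ factors the mod~$2$ orientation $Mj_1\to H\F_2$, so that the triangle
\[
\xymatrix@R=0.6cm{
H_*(Mj_1)\ar[rr]^{\rho}\ar[dr] && \mathcal{A}_* \\
& H_*(H\Z)\ar@{^{(}->}[ur] &
}
\]
commutes, and under Theorem~\ref{thm:H*Mjr} the inclusion $H_*(H\Z)\hookrightarrow\mathcal{A}_*$ is exactly the inclusion of $\mathcal{A}_*\square_{\mathcal{A}(0)_*}\F_2$. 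Hence the composite of $s$ with $H_*(Mj_1)\to H_*(H\Z)$ is $\rho|_{B_1}\circ(\rho|_{B_1})^{-1}=\Id$, as required.

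The only step with any content is showing that $\rho|_{B_1}$ lands in $\mathcal{A}_*\square_{\mathcal{A}(0)_*}\F_2$ and is an isomorphism onto it, but this has already been carried out above via the explicit right-coaction computation~\eqref{eq:X1s-coaction}; the rest of the argument is formal. Thus there is no serious obstacle beyond the bookkeeping needed to assemble these ingredients.
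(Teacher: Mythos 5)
Your proof is correct, but it takes a genuinely different route from the paper's. The paper disposes of this proposition in one line by applying Lemma~\ref{lem:Comodmaps}: the trivial $\mathcal{A}(0)_*$-comodule algebra homomorphism $\mathcal{A}_*\square_{\mathcal{A}(0)_*}\F_2\to\F_2\to H_*(Mj_1)/I_1$ lifts, by that adjunction, to an $\mathcal{A}_*$-comodule algebra map into $\mathcal{A}_*\square_{\mathcal{A}(0)_*}H_*(Mj_1)/I_1$, which is then identified with $H_*(Mj_1)$ via Proposition~\ref{prop:H*Mj1-extended}. You bypass the cotensor description entirely and work with the explicit subalgebra $B_1=\F_2[X_{1,s}:s\geq1]$: formula \eqref{eq:X1s-leftcoaction} shows all terms of $\psi X_{1,s}$ have second tensor factor in $B_1$, so $B_1$ is a subcomodule algebra, and Subsection~\ref{subsec:H*Mj1} has already recorded that $\rho|_{B_1}$ is an isomorphism of $\mathcal{A}_*$-comodule algebras onto $\mathcal{A}_*\square_{\mathcal{A}(0)_*}\F_2$; inverting it and including gives the section. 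What each approach buys: yours is more concrete and does not actually require Proposition~\ref{prop:H*Mj1-extended}, only the coaction computations preceding it, whereas the paper's formal argument is the one that transfers verbatim to the splittings for $Mj_2$, $Mj_3$, $M\jc$ and $R_\infty$. The one point you rightly flag and should keep explicit is that $\rho$ factors as $H_*(Mj_1)\to H_*(H\Z)\hookrightarrow\mathcal{A}_*$ with the second map the inclusion of $\mathcal{A}_*\square_{\mathcal{A}(0)_*}\F_2$, which is how Theorem~\ref{thm:H*Mjr} identifies $\im\rho$; with that, the composite is the identity as claimed.
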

\begin{proof}
This is proved using Lemma~\ref{lem:Comodmaps} together with
the trivial $\mathcal{A}(0)_*$-comodule algebra homomorphism
$\mathcal{A}_*\square_{\mathcal{A}(0)_*}\F_2\to H_*(Mj_1)/I_1$.
\end{proof}

\subsection{The homology of $Mj_2$}\label{subsec:H*Mj2}

We have
\begin{multline*}
H_*(Mj_2) =   
\F_2[\dlQ^Ix_4,\dlQ^Jx_6,\dlQ^Kx_7 :
\text{\rm $I,J,K$ admissible, $\exc(I)>4$,
                     $\exc(J)>6$, $\exc(K)>7$}],
\end{multline*}
with right coaction satisfying
\begin{align*}
\tpsi x_4 &=
x_4\otimes1 + 1\otimes\zeta_1^4, \\
\tpsi x_6 &=
x_6\otimes1 + x_4\otimes\zeta_1^2 + 1\otimes\xi_2^2, \\
\tpsi x_7 &=
x_7\otimes1 + x_6\otimes\zeta_1 + x_4\otimes\xi_2 + 1\otimes\xi_3.
\end{align*}
Furthermore,
\begin{align*}
\tpsi\dlQ^8 x_7 &=
x_7^2\otimes\zeta_1 + x_6^2\otimes\zeta_1^3
+ x_4^2\otimes\zeta_1\xi_2^2 + 1\otimes\xi_3^2\zeta_1 \\
& \ph{aaaaaaaaaaaaaaaaaaaaaaa}
+ \dlQ^8(x_7\otimes1 + x_6\otimes\zeta_1 + x_4\otimes\xi_2 + 1\otimes\xi_3) \\
&=
x_7^2\otimes\zeta_1 + x_6^2\otimes\zeta_1^3
+ x_4^2\otimes\zeta_1\xi_2^2 + 1\otimes\xi_3^2\zeta_1
+ \dlQ^8x_7 + \dlQ^7x_6\otimes\zeta_1^2 \\
&\ph{aaaaaaaaaaa}
+ \dlQ^5x_4\otimes\xi_2^2
+ 1\otimes(\xi_4+\zeta_1\xi_3^2) + x_6^2\otimes\zeta_2
+ x_4^2\otimes(\xi_3+\zeta_1\xi_2^2)  \\
&=
(\dlQ^8x_7 + x_7^2\otimes\zeta_1 + x_6^2\otimes\xi_2
                    + x_4^2\otimes\xi_3 + 1\otimes\xi_4)
 + \dlQ^7x_6\otimes\zeta_1^2 + \dlQ^5x_4\otimes\xi_2^2,
\end{align*}
so the left $\mathcal{A}(1)_*$-coproduct
\[
\psi'\: H_*(Mj_2) \to \mathcal{A}(1)_*\otimes H_*(Mj_2)
\]
satisfies
\begin{align*}
\psi'\dlQ^8 x_7 &=
(\dlQ^8x_7 + \zeta_1\otimes x_7^2 +\zeta_2\otimes x_6^2
              + \zeta_3\otimes x_4^2 + \zeta_4\otimes1)
 +\zeta_1^2\otimes\dlQ^7x_6 + \zeta_2^2\otimes\dlQ^5x_4 \\
&=
(\dlQ^8x_7 + \zeta_1\otimes x_7^2 +\zeta_2\otimes x_6^2)
                             +\zeta_1^2\otimes\dlQ^7x_6.
\end{align*}
We also have
\begin{align*}
\psi'\dlQ^9 x_6 &=
1\otimes\dlQ^9 x_6 + \zeta_1^2\otimes\dlQ^7x_6
+ \zeta_1^4\otimes\dlQ^7x_4 + \zeta_2^2\otimes\dlQ^5x_4 \\
&= 1\otimes\dlQ^9 x_6 + \zeta_1^2\otimes\dlQ^7x_6,
\end{align*}
so
\[
\psi'(\dlQ^8 x_7+\dlQ^9 x_6) =
1\otimes\dlQ^8x_7 + \zeta_1\otimes x_7^2 +\zeta_2\otimes x_6^2
\in \mathcal{A}(1)_*\otimes H_*(Mj_2).
\]

Now we define a sequence of elements $X_{2,s}$ ($s\geq1$)
by
\[
X_{2,s} =
\begin{dcases*}
x_4 & if $s=1$, \\
x_6 & if $s=2$, \\
x_7 & if $s=3$, \\
\dlQ^8 x_7+\dlQ^9 x_6 & if $s=4$, \\
\dlQ^{(2^{s-1},\ldots,2^5,2^4)}(\dlQ^8 x_7+\dlQ^9 x_6)
    = \dlQ^{2^{s-1}}X_{2,s-1} & if $s\geq5$.
\end{dcases*}
\]
An inductive calculation shows that for $s\geq4$,
\[
\psi'X_{2,s} =
1\otimes X_{2,s} + \zeta_1\otimes X_{2,s-1}^2 + \zeta_2\otimes X_{2,s-2}^4
\in \mathcal{A}(1)_*\otimes I_2.
\]
So this sequence is regular and generates an $\mathcal{A}(1)_*$-invariant
ideal
\[
I_2 = (X_{2,s} : s\geq1)\lhd H_*(Mj_2).
\]
The next result follows using similar arguments to those
in the proof of Proposition~\ref{prop:H*Mj1-extended}
using the diagram~\eqref{eq:H*Mjr}.
\begin{prop}\label{prop:H*Mj2-extended}
There is an isomorphism of $\mathcal{A}_*$-comodule
algebras
\[
H_*(Mj_2) \xrightarrow{\;\iso\;}
\mathcal{A}_*\square_{\mathcal{A}(1)_*} H_*(Mj_2)/I_2.
\]
\end{prop}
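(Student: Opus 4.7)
The plan is to follow the template established in the proof of Proposition~\ref{prop:H*Mj1-extended}, with the diagram \eqref{eq:H*Mjr} taken at $r=2$. First I would verify that the dashed arrow in that diagram genuinely exists, i.e.\ that the composite $(\id\otimes\quo)\psi\:H_*(Mj_2)\to\mathcal{A}_*\otimes H_*(Mj_2)/I_2$ lands inside $\mathcal{A}_*\square_{\mathcal{A}(1)_*}H_*(Mj_2)/I_2$. This follows formally from coassociativity of $\psi$ together with the fact that $I_2$ is an $\mathcal{A}(1)_*$-subcomodule, which was checked just before the proposition via the inductive formula
\[
\psi'X_{2,s}=1\otimes X_{2,s}+\zeta_1\otimes X_{2,s-1}^2+\zeta_2\otimes X_{2,s-2}^4.
\]
Combined with Corollary~\ref{cor:P-coalg} (applied to the $P$-coalgebra $\mathcal{A}_*$ and its finite dimensional quotient $\mathcal{A}(1)_*$), the target is naturally identified with the tensor product $(\mathcal{A}_*\square_{\mathcal{A}(1)_*}\F_2)\otimes H_*(Mj_2)/I_2$.

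Next I would compute the images of the polynomial generators under the dashed arrow. The element $X_{2,s}$ is mapped (via $\quo\psi$) to a class whose leading term is $\zeta_s^{\mathrm{e}_1(s)}\otimes 1$, since under the orientation map $\rho\:H_*(Mj_2)\to\mathcal{A}_*$ we have $\rho(x_4)=\zeta_1^4$, $\rho(x_6)=\zeta_2^2$, $\rho(x_7)=\zeta_3$, and $\rho(X_{2,s})=\zeta_s$ for $s\geq 4$; by Theorem~\ref{thm:H*Mjr} these are exactly the polynomial generators of $\mathcal{A}_*\square_{\mathcal{A}(1)_*}\F_2\iso H_*(k\O)$. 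Every other polynomial generator $y$ of $H_*(Mj_2)$ has $\quo\psi(y)=1\otimes\bar{y}+\cdots$, so generators of the factor $H_*(Mj_2)/I_2$ are covered as well. A straightforward filtration argument (filtering by the total degree of the $\mathcal{A}_*$-tensor-factor) then shows that the dashed arrow is surjective.

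To conclude isomorphism I would run the Poincar\'e series comparison. The key numerical input is that $\{X_{2,s}\}_{s\geq1}$ is a regular sequence in the polynomial ring $H_*(Mj_2)$ whose degrees $|X_{2,s}|=4,6,7,15,\ldots,2^s{-}1,\ldots$ coincide term by term with the degrees of the polynomial generators $\zeta_s^{\mathrm{e}_1(s)}$ of $\mathcal{A}_*\square_{\mathcal{A}(1)_*}\F_2$ listed in Proposition~\ref{prop:A*-P-coalg}. Hence
\[
P\bigl(\mathcal{A}_*\square_{\mathcal{A}(1)_*}\F_2\bigr)(t)\cdot P(H_*(Mj_2)/I_2)(t)
=\prod_{s\geq1}\frac{1-t^{|X_{2,s}|}}{1-t^{|X_{2,s}|}}\cdot P(H_*(Mj_2))(t)=P(H_*(Mj_2))(t).
\]
A surjection of graded vector spaces of finite type with matching Poincar\'e series is an isomorphism, giving the result.

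The main obstacle is less algebraic than bookkeeping: one has to make sure that the modifications built into the definition of $X_{2,s}$ (the added $\dlQ^9x_6$ term at $s=4$, and the iterated Dyer-Lashof terms arising for $s\geq 5$) really do remove all nontrivial $\mathcal{A}(1)_*$-coaction terms, so that the extra pieces lie in $I_2$. Remark~\ref{rem:H*Mj1-extended} flags exactly this subtlety; checking it amounts to a clean induction using formula~\eqref{eq:tpsiQ^rz_n} and the Cartan formula, of the same flavour as the inductive verification of~\eqref{eq:X1s-coaction} in the $Mj_1$ case.
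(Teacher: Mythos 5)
Your proposal is correct and follows essentially the same route as the paper, which itself simply runs the template of Proposition~\ref{prop:H*Mj1-extended} through diagram~\eqref{eq:H*Mjr} at $r=2$: the $\mathcal{A}(1)_*$-invariance of $I_2$ gives the dashed arrow, the images $\zeta_1^4\otimes1$, $\zeta_2^2\otimes1$, $\zeta_s\otimes1+\cdots$ of the $X_{2,s}$ give surjectivity, and the matching of degrees $4,6,7,15,\ldots,2^s-1,\ldots$ with those of the generators $\zeta_s^{\mathrm{e}_1(s)}$ of $\mathcal{A}_*\square_{\mathcal{A}(1)_*}\F_2$ gives equality of Poincar\'e series and hence an isomorphism. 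Your identification of the real work --- verifying via \eqref{eq:tpsiQ^rz_n} and the Cartan formula that the correction term $\dlQ^9x_6$ cancels the stray $\zeta_1^2\otimes\dlQ^7x_6$ so that $\psi'X_{2,s}\in\mathcal{A}(1)_*\otimes I_2$ --- is exactly where the paper also places it.
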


The \Einfty  morphism $Mj_2\to k\O$ induces an
algebra homomorphism
$H_*(Mj_2)\to H_*(k\O)\subseteq\mathcal{A}_*$
under which
\[
X_{2,1}\mapsto \zeta_1^4,\quad X_{2,2}\mapsto \zeta_2^2,
\quad X_{2,s}\mapsto \zeta_s\;\;\; (s\geq3).
\]

We have the following splitting result analogous
to Proposition~\ref{prop:H*Mj1-splitting}.
\begin{prop}\label{prop:H*Mj2-splitting}
There is a splitting of $\mathcal{A}_*$-comodule algebras
\[
\xymatrix{
\mathcal{A}_*\square_{\mathcal{A}(1)_*}\F_2\ar[rr]^{\iso}\ar[dr]
&& \mathcal{A}_*\square_{\mathcal{A}(1)_*}\F_2 \\
& H_*(Mj_2)\ar[ur] &
}
\]
where $H_*(Mj_2)\to H_*(k\O)=\mathcal{A}_*\square_{\mathcal{A}(1)_*}\F_2$
is induced by the \Einfty orientation $Mj_2\to k\O$.
\end{prop}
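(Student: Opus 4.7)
The approach is to mirror the proof of Proposition~\ref{prop:H*Mj1-splitting} step for step, replacing $\mathcal{A}(0)_*$ by $\mathcal{A}(1)_*$, the spectrum $Mj_1$ by $Mj_2$, the target $H\Z$ by $k\O$, and invoking Proposition~\ref{prop:H*Mj2-extended} in place of Proposition~\ref{prop:H*Mj1-extended}. The plan is to construct the splitting purely algebraically via Lemma~\ref{lem:Comodmaps}, and then check that composition with the orientation recovers the identity.

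First I would take the trivial $\mathcal{A}(1)_*$-comodule algebra homomorphism
\[
\alpha\:\mathcal{A}_*\square_{\mathcal{A}(1)_*}\F_2\xrightarrow{\;\aug\;}\F_2\xrightarrow{\;\text{unit}\;}H_*(Mj_2)/I_2
\]
and feed it into Lemma~\ref{lem:Comodmaps} with $A=\mathcal{A}_*$ and $B=\mathcal{A}(1)_*$. This produces a unique $\mathcal{A}_*$-comodule algebra lift
\[
\tilde{\alpha}\:\mathcal{A}_*\square_{\mathcal{A}(1)_*}\F_2\to\mathcal{A}_*\square_{\mathcal{A}(1)_*}H_*(Mj_2)/I_2,
\]
and the isomorphism of Proposition~\ref{prop:H*Mj2-extended} identifies the codomain with $H_*(Mj_2)$, yielding the candidate section $\sigma$. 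Since the $\mathcal{A}_*$-coaction on $\mathcal{A}_*\square_{\mathcal{A}(1)_*}\F_2\subseteq\mathcal{A}_*$ is just the restriction of the coproduct on $\mathcal{A}_*$, the counit axiom collapses the explicit formula for $\tilde{\alpha}$ given after Lemma~\ref{lem:Comodmaps} to $\sigma(x)=x\otimes 1$ in the cotensor description.

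The substantive step, and the only place where any care is required, is verifying that the composite
\[
\mathcal{A}_*\square_{\mathcal{A}(1)_*}\F_2\xrightarrow{\;\sigma\;}H_*(Mj_2)\xrightarrow{\;\rho\;}H_*(k\O)=\mathcal{A}_*\square_{\mathcal{A}(1)_*}\F_2
\]
is the identity. I would do this by further post-composing with the inclusion $H_*(k\O)\hookrightarrow\mathcal{A}_*$ and comparing two $\mathcal{A}_*$-comodule algebra homomorphisms into $\mathcal{A}_*$. Using the commutative triangle $H_*(Mj_2)\to H_*(k\O)\hookrightarrow H_*(H\F_2)=\mathcal{A}_*$ coming from the factorisation of the $H\F_2$-orientation, together with the description $\sigma(x)=x\otimes 1$ and the inverse of the isomorphism of Proposition~\ref{prop:H*Mj2-extended}, one finds that this post-composition equals the tautological inclusion $\mathcal{A}_*\square_{\mathcal{A}(1)_*}\F_2\hookrightarrow\mathcal{A}_*$, whence $\rho\circ\sigma=\Id$ as desired.
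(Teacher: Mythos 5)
Your proposal is correct and follows essentially the same route as the paper: the paper proves the $Mj_1$ case by applying Lemma~\ref{lem:Comodmaps} to the trivial comodule algebra homomorphism $\mathcal{A}_*\square_{\mathcal{A}(0)_*}\F_2\to H_*(Mj_1)/I_1$ and composing with the extended-comodule isomorphism, and states the $Mj_2$ case as the direct analogue, which is exactly what you carry out with $\mathcal{A}(1)_*$, $I_2$ and Proposition~\ref{prop:H*Mj2-extended}. Your extra verification that $\rho\circ\sigma=\Id$ via the factorisation of the $H\F_2$-orientation through $H_*(k\O)\subseteq\mathcal{A}_*$ is a detail the paper leaves implicit, and it is sound.
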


\subsection{The homology of $Mj_3$}\label{subsec:H*Mj3}

In $H_*(Mj_3)$, consider the regular sequence
\[
X_{3,s} =
\begin{dcases*}
x_8 & if $s=1$, \\
x_{12} & if $s=2$, \\
x_{14} & if $s=3$, \\
x_{15} & if $s=4$, \\
\dlQ^{16}x_{15} + \dlQ^{17}x_{14} + \dlQ^{19}x_{12} & if $s=5$, \\
\dlQ^{(2^{s-1},\ldots,2^6,2^5)}(\dlQ^{16}x_{15} + \dlQ^{17}x_{14} + \dlQ^{19}x_{12})
    = \dlQ^{2^{s-1}}X_{3,s-1} & if $s\geq6$.
\end{dcases*}
\]
We leave the reader to verify that the ideal
\[
I_3 = ( X_{3,s} : s\geq1 )\lhd H_*(Mj_3)
\]
is $\mathcal{A}(2)_*$-invariant. The proof of the
following result is similar to those of
Propositions~\ref{prop:H*Mj1-extended} and~\ref{prop:H*Mj2-extended}
using the diagram~\eqref{eq:H*Mjr}.
\begin{prop}\label{prop:H*Mj3-extended}
There is an isomorphism of $\mathcal{A}_*$-comodule
algebras
\[
H_*(Mj_3) \xrightarrow{\;\iso\;}
\mathcal{A}_*\square_{\mathcal{A}(2)_*} H_*(Mj_3)/I_3.
\]
\end{prop}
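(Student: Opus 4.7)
The plan is to follow closely the arguments for $Mj_1$ and $Mj_2$ in Propositions~\ref{prop:H*Mj1-extended} and~\ref{prop:H*Mj2-extended}, specialising the diagram~\eqref{eq:H*Mjr} to $r=3$. Two things need to be verified: first, that $I_3$ is indeed an $\mathcal{A}(2)_*$-invariant ideal, so that the right-hand column of the diagram is defined; second, that the dashed comparison map is an isomorphism.

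For the first step, I would compute the reduced coaction
\[
\psi'\:H_*(Mj_3)\to\mathcal{A}(2)_*\otimes H_*(Mj_3)
\]
on each $X_{3,s}$ by induction on $s$. For $s=1,\ldots,4$, the formulas~\eqref{subeqn:tmf-coaction} reduced modulo the defining ideal of $\mathcal{A}(2)_*$ (under which $\zeta_1^8$, $\zeta_2^4$, $\zeta_3^2$ and $\zeta_4$ all vanish) give expressions lying in $\mathcal{A}(2)_*\otimes I_3$. The critical case is $s=5$: one applies the Nishida-type formula~\eqref{eq:tpsiQ^rz_n} to each of $\dlQ^{16}x_{15}$, $\dlQ^{17}x_{14}$, $\dlQ^{19}x_{12}$ separately, using relations such as $\dlQ^{2^k}\xi_k=\xi_{k+1}+\zeta_1\xi_k^2$ that already appear implicitly in the $Mj_1$ calculation. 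Each of the three individual coactions contains unwanted diagonal terms with coefficients like $\zeta_1^{2^a}$ and $\zeta_2^{2^b}$ that survive in $\mathcal{A}(2)_*$ for some of the three inputs but not others; the specific combination $X_{3,5}=\dlQ^{16}x_{15}+\dlQ^{17}x_{14}+\dlQ^{19}x_{12}$ is engineered so that these cross-terms cancel pairwise, leaving
\[
\psi'X_{3,5}=1\otimes X_{3,5}+\zeta_1\otimes(\cdots)+\zeta_2\otimes(\cdots)+\zeta_3\otimes(\cdots)
\]
with each parenthetical factor in $I_3$. For $s\geq 6$, applying $\tpsi$ to $X_{3,s}=\dlQ^{2^{s-1}}X_{3,s-1}$ and invoking the Cartan formula together with the unstable conditions (precisely as in the $Mj_1$ induction) propagates the shape of the formula and shows $\psi'X_{3,s}\in\mathcal{A}(2)_*\otimes I_3$, establishing the $\mathcal{A}(2)_*$-invariance of $I_3$.

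For the second step, I would use the orientation $\rho\:Mj_3\to\tmf$ of Theorem~\ref{thm:H*Mjr}, under which $X_{3,1}\mapsto\zeta_1^8$, $X_{3,2}\mapsto\zeta_2^4$, $X_{3,3}\mapsto\zeta_3^2$, $X_{3,4}\mapsto\zeta_4$ and $X_{3,s}\mapsto\zeta_s$ for $s\geq 5$. These images are precisely polynomial generators of
\[
\mathcal{A}_*\square_{\mathcal{A}(2)_*}\F_2=\F_2[\zeta_1^8,\zeta_2^4,\zeta_3^2,\zeta_4,\zeta_5,\ldots].
\]
Consequently the composites $\quo\psi X_{3,s}$ hit those generators modulo $(\mathcal{A}_*\square_{\mathcal{A}(2)_*}\F_2)\otimes I_3$, and together with the images of polynomial generators of $H_*(Mj_3)/I_3$ they provide a polynomial generating set for the codomain of the dashed arrow in~\eqref{eq:H*Mjr}. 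A routine argument then gives surjectivity, and the vector space isomorphism~\eqref{eq:Acotensor/A(n)} applied with $M_*=H_*(Mj_3)/I_3$ guarantees that both sides of the dashed arrow have equal Poincar\'e series, so the map is an isomorphism.

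The main obstacle is the explicit calculation of $\psi'X_{3,5}$. In the $Mj_2$ case only two Dyer--Lashof operations needed to be combined, but here one must combine three operations applied to three different cells. The formula~\eqref{eq:tpsiQ^rz_n} expands each summand into numerous contributions coming from iterated Dyer--Lashof actions on $\zeta(t)^k$ together with the lower-degree coactions on $x_{12}$, $x_{14}$ and $x_{15}$, and verifying that all off-diagonal error terms match up and cancel modulo the defining ideal of $\mathcal{A}(2)_*$ is a delicate bookkeeping exercise. Once that is done, the induction for $s\geq 6$ and the Poincar\'e series argument are mechanical adaptations of the $r=1,2$ cases.
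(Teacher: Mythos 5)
Your proposal is correct and follows essentially the same route as the paper: the paper explicitly leaves the $\mathcal{A}(2)_*$-invariance of $I_3$ to the reader and then declares the rest ``similar to Propositions~\ref{prop:H*Mj1-extended} and~\ref{prop:H*Mj2-extended} using the diagram~\eqref{eq:H*Mjr}'', which is precisely the surjectivity-plus-Poincar\'e-series argument you describe. Your identification of the $s=5$ cancellation as the only delicate point, and your inductive propagation for $s\geq6$ via the Cartan formula and unstable conditions, match the pattern established in the $Mj_1$ and $Mj_2$ computations.
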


The \Einfty morphism $Mj_3\to\tmf$ induces an
algebra homomorphism
$H_*(Mj_3)\to H_*(\tmf)\subseteq\mathcal{A}_*$
under which
\[
X_{3,1}\mapsto \zeta_1^8,\quad X_{3,2}\mapsto \zeta_2^4,
\quad X_{3,3}\mapsto \zeta_3^2,
\quad X_{3,s}\mapsto \zeta_s\;\;\; (s\geq3).
\]

We have the following splitting result analogous
to Propositions~\ref{prop:H*Mj1-splitting}
and~\ref{prop:H*Mj2-splitting}.
\begin{prop}\label{prop:H*Mj3-splitting}
There is a splitting of $\mathcal{A}_*$-comodule
algebras
\[
\xymatrix{
\mathcal{A}_*\square_{\mathcal{A}(2)_*}\F_2\ar[rr]^{\iso}\ar[dr]
&& \mathcal{A}_*\square_{\mathcal{A}(2)_*}\F_2 \\
& H_*(Mj_3)\ar[ur] &
}
\]
where
$H_*(Mj_3)\to H_*(\tmf)=\mathcal{A}_*\square_{\mathcal{A}(2)_*}\F_2$
is induced by the \Einfty orientation $Mj_3\to\tmf$.
\end{prop}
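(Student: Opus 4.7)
The plan is to mimic the proofs of Propositions \ref{prop:H*Mj1-splitting} and \ref{prop:H*Mj2-splitting}, substituting $\mathcal{A}(2)_*$ for $\mathcal{A}(0)_*$ or $\mathcal{A}(1)_*$ throughout. Since $I_3$ is an $\mathcal{A}(2)_*$-invariant ideal, as asserted just before Proposition \ref{prop:H*Mj3-extended}, the quotient $H_*(Mj_3)/I_3$ inherits an $\mathcal{A}(2)_*$-comodule algebra structure. The induced left $\mathcal{A}(2)_*$-coaction on the subalgebra $\mathcal{A}_*\square_{\mathcal{A}(2)_*}\F_2\subset\mathcal{A}_*$ is trivial by the very definition of the cotensor product with $\F_2$, so the composite
\[
\alpha\:\mathcal{A}_*\square_{\mathcal{A}(2)_*}\F_2 \xrightarrow{\;\aug\;}\F_2 \xrightarrow{\;\text{unit}\;} H_*(Mj_3)/I_3
\]
is a homomorphism of $\mathcal{A}(2)_*$-comodule algebras.

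Applying Lemma \ref{lem:Comodmaps} to $\alpha$, I obtain a unique $\mathcal{A}_*$-comodule algebra lift
\[
\tilde{\alpha}\:\mathcal{A}_*\square_{\mathcal{A}(2)_*}\F_2 \lra \mathcal{A}_*\square_{\mathcal{A}(2)_*}H_*(Mj_3)/I_3,
\]
and composing with the inverse of the isomorphism of Proposition \ref{prop:H*Mj3-extended} produces a candidate splitting $s\:\mathcal{A}_*\square_{\mathcal{A}(2)_*}\F_2 \to H_*(Mj_3)$, which is manifestly a morphism of $\mathcal{A}_*$-comodule algebras.

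It then remains to verify that the outer composite
\[
\mathcal{A}_*\square_{\mathcal{A}(2)_*}\F_2 \xrightarrow{\;s\;} H_*(Mj_3) \xrightarrow{\;\rho\;} H_*(\tmf) = \mathcal{A}_*\square_{\mathcal{A}(2)_*}\F_2
\]
is the identity, where $\rho$ is induced by the \Einfty orientation $Mj_3\to\tmf$. By the uniqueness clause of Lemma \ref{lem:Comodmaps}, this reduces to checking that the associated $\mathcal{A}(2)_*$-comodule algebra map $\mathcal{A}_*\square_{\mathcal{A}(2)_*}\F_2 \to \F_2$ (with $\F_2$ arising as the quotient of $H_*(\tmf)$ by its augmentation ideal) agrees with $\aug$. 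Since $\rho$ sends each $X_{3,s}$ into $\mathcal{A}_*\square_{\mathcal{A}(2)_*}\F_2\subset\mathcal{A}_*$, as listed immediately before the proposition, the image of $I_3$ lands in the augmentation ideal of $H_*(\tmf)$, yielding the required compatibility.

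The only point requiring genuine care is the bookkeeping of the several distinct $\mathcal{A}(2)_*$-coactions in play and the unwinding of the correspondence of Lemma \ref{lem:Comodmaps} for the composite $\rho\circ s$; otherwise the argument is formally parallel to the $r=1,2$ cases, and I anticipate no substantive new obstacles.
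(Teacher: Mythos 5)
Your overall strategy is exactly the paper's: the paper proves Proposition~\ref{prop:H*Mj1-splitting} in one line by feeding the ``trivial'' comodule algebra homomorphism $\mathcal{A}_*\square_{\mathcal{A}(0)_*}\F_2\to H_*(Mj_1)/I_1$ into Lemma~\ref{lem:Comodmaps}, and declares the $r=2,3$ cases analogous; your proposal is that argument written out for $r=3$, combined with Proposition~\ref{prop:H*Mj3-extended} and the standard uniqueness argument identifying $\rho\circ s$ with the identity (the same device the paper uses in the proof of Theorem~\ref{thm:tmf->kO}).

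There is, however, one false assertion in your justification. The induced left $\mathcal{A}(2)_*$-coaction on $\mathcal{A}_*\square_{\mathcal{A}(2)_*}\F_2=\F_2[\zeta_1^8,\zeta_2^4,\zeta_3^2,\zeta_4,\ldots]$ is \emph{not} trivial: the cotensor product is formed using the \emph{right} $\mathcal{A}(2)_*$-comodule structure on $\mathcal{A}_*$, so it is the induced right coaction that is trivial by definition, whereas the induced left coaction is obtained by reducing the left-hand tensor factors of $\Delta$ modulo $\mathcal{I}(2)$ and does not collapse to $1\otimes(-)$. For instance, since $\Delta(\zeta_2^4)=\zeta_2^4\otimes1+\zeta_1^4\otimes\zeta_1^8+1\otimes\zeta_2^4$ and $\zeta_1^4\notin\mathcal{I}(2)$, the left $\mathcal{A}(2)_*$-coaction sends $\zeta_2^4$ to $1\otimes\zeta_2^4+\|\zeta_1^4\|\otimes\zeta_1^8$ with $\|\zeta_1^4\|\neq0$; this is dual to the familiar fact that $H^*(\tmf)$ is not a trivial $\mathcal{A}(2)^*$-module. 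What your argument actually requires is only the weaker statement that the augmentation $\mathcal{A}_*\square_{\mathcal{A}(2)_*}\F_2\to\F_2$ is a morphism of left $\mathcal{A}(2)_*$-comodules, and this does hold: the only obstruction is the term $\|x\|\otimes1$ in the coaction on a positive-degree element~$x$, and every positive-degree element of $\F_2[\zeta_1^8,\zeta_2^4,\zeta_3^2,\zeta_4,\ldots]$ lies in $\mathcal{I}(2)=\ker(\mathcal{A}_*\to\mathcal{A}(2)_*)$, so that term vanishes. With this one-line repair your proof is correct and coincides with the paper's.
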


We end this discussion by recording the following result
which was in part motivated by a result of Lawson \&
Naumann~\cite{TL&NN}.
\begin{thm}\label{thm:tmf->kO}
There is a morphism of \Einfty ring spectra $Mj_3\to k\O$
which induces an epimorphism
\[
H_*(Mj_3) \twoheadrightarrow
\F_2[\zeta_1^8,\zeta_2^4,\zeta_3^2,\zeta_4,\zeta_5,\ldots]
\subseteq\F_2[\zeta_1^4,\zeta_2^2,\zeta_3,\zeta_4,\zeta_5,\ldots]
\iso H_*(k\O)
\]
on $H_*(-)$ and an epimorphism $\pi_k(Mj_3)\to\pi_k(k\O)$
for $k\neq 4$.
\end{thm}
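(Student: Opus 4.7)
The plan is to construct an \Einfty map $Mj_3 \to k\O$ by obstruction theory, then compute its effect on $H_*$ and $\pi_*$ separately.

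By Proposition~\ref{prop:RedP-M} we have $Mj_3 \simeq \tilde{\mathbb{P}} M\O\langle 8\rangle^{[15]}$, so the universal property of $\tilde{\mathbb{P}}$ reduces the task to producing an $S$-module map $f\colon M\O\langle 8\rangle^{[15]} \to k\O$. I would build $f$ in four stages, starting from the unit $S^0 \to k\O$ and successively extending over $e^8$, $e^{12}$, $e^{14}$, $e^{15}$. The obstructions lie in $\pi_7(k\O)$, $\pi_{11}(k\O)$, $\pi_{13}(k\O)$, $\pi_{14}(k\O)$, all of which vanish by the well-known description $\pi_*(k\O) \cong \Z[\eta,\alpha,\beta]/(2\eta,\eta^3,\eta\alpha,\alpha^2-4\beta)$ with $|\eta|=1$, $|\alpha|=4$, $|\beta|=8$.

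To compute $f_*$ on $H_*$, we use $\mathcal{A}_*$-comodule compatibility: given the coactions displayed in \eqref{subeqn:tmf-coaction} together with the Milnor coproduct on $\mathcal{A}_*$, one checks successively that the only choices compatible with $f_*(1)=1$ are $f_*(x_8)=\zeta_1^8$, $f_*(x_{12})=\zeta_2^4$, $f_*(x_{14})=\zeta_3^2$, $f_*(x_{15})=\zeta_4$. Since the \Einfty extension commutes with Dyer--Lashof operations, the remaining polynomial generators $\dlQ^I x_n$ of $H_*(Mj_3)$ (Theorem~\ref{thm:H*Mjr}) are sent to the corresponding Dyer--Lashof iterates of $\zeta_1^8,\zeta_2^4,\zeta_3^2,\zeta_4$ in $H_*(k\O)$. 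A generation argument parallel to the one in Section~\ref{subsec:H*Mj3} now identifies the image as exactly $\F_2[\zeta_1^8,\zeta_2^4,\zeta_3^2,\zeta_4,\zeta_5,\ldots]$, which is the copy of $H_*(\tmf)$ sitting inside $H_*(k\O)$.

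For $\pi_*$, the map is a ring homomorphism, so its image is a subring of $\pi_*(k\O)$; it contains $\eta$ (from the unit) and, after using the indeterminacy in the obstruction-theoretic extensions, the Bott class $\beta \in \pi_8$ and $\alpha\beta \in \pi_{12}$ obtained from lifts of $x_8$ and $x_{12}$. Inspecting the $8$-periodic pattern $(\Z,\Z/2,\Z/2,0,\Z,0,0,0)$ of $\pi_*(k\O)$ shows that the subring generated by $\eta,\beta,\alpha\beta$ (subject to the relations inherited from $\pi_*(k\O)$) fills out every $\pi_k(k\O)$ except the copy of $\Z$ in $\pi_4$ generated by $\alpha$ alone. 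Surjectivity must fail at $k=4$ since $Mj_3$ has no cells in the range $0<*<8$, whence $\pi_4(Mj_3)=\pi_4(S^0)=0$. The main obstacle is step one: arranging the obstruction-theoretic choices so that the lifts of $x_8$ and $x_{12}$ map to $\beta$ and to a unit multiple of $\alpha\beta$ respectively, which requires identifying the torsor of extensions at each stage with the appropriate free summand of $\pi_*(k\O)$ and selecting extensions meeting all the needed conditions simultaneously.
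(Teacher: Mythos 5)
Your construction of the map and your computation of its effect on homology follow essentially the same route as the paper: obstruction theory over the cells of $M\O\langle8\rangle^{[15]}\simeq\tmf^{[15]}$ with vanishing obstruction groups $\pi_7,\pi_{11},\pi_{13},\pi_{14}$ of $k\O$, rigidity of the $\mathcal{A}_*$-comodule map (the paper packages this as Lemma~\ref{lem:Comodmaps}, giving $\Comod_{\mathcal{A}(1)_*}(H_*(\tmf^{[15]}),\F_2)\iso\F_2$, so $\theta_*$ is unique), and Dyer--Lashof compatibility of the $\tilde{\mathbb{P}}$-extension to identify the image as $\F_2[\zeta_1^8,\zeta_2^4,\zeta_3^2,\zeta_4,\zeta_5,\ldots]$. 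That part is fine.

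The homotopy statement is where there is a genuine gap, and the mechanism you propose for closing it cannot work. You hope to ``use the indeterminacy in the obstruction-theoretic extensions'' to force a class over the $8$-cell to hit $\beta$. But varying the extension over the $8$-cell by $g\in\pi_8(k\O)$ changes the image of the generator of $\pi_8(\tmf^{[15]})_{(2)}\iso\Z_{(2)}$ only by $16g$ (the generator is a coextension of $16$ through $C_\sigma$, so its image under the pinch map to $S^8$ is $16$, and that is the factor by which the variation acts). Since the indeterminacy lies in $16\pi_8(k\O)$, no choice of extension can convert a non-surjection on $\pi_8$ into a surjection: either every extension works or none does. The actual input needed is the classical computation that the Toda brackets $\langle 16,\sigma,1\rangle\subseteq\pi_8(\tmf)$ and $\langle 16,\sigma,1\rangle\subseteq\pi_8(k\O)$ consist of ($2$-local) generators; naturality of the bracket under any unital map $\theta$ then gives surjectivity on $\pi_8$ for free, with no choices to arrange. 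The paper argues exactly this way, and handles $\pi_{12}$ with the brackets $\langle 8,\nu,c_4'\rangle$ and $\langle 8,\nu,w\rangle$ (using $\nu c_4'=0$ and $\nu w=0$), then $\pi_9,\pi_{10}$ by $\eta$-multiplication and the rest by multiplying by the image of $c_4'$ --- which is the same subring bookkeeping you describe. A secondary imprecision: there is no homotopy class that is a ``lift of $x_8$'' in the Hurewicz sense, since $Mj_3$ is minimal atomic and the mod~$2$ Hurewicz map vanishes in positive degrees; the relevant classes are the coextensions living in the Toda brackets above. So your plan is structurally right but is missing its key arithmetic ingredient at $\pi_8$ and $\pi_{12}$.
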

\begin{proof}
We will use the fact that $Mj_3\sim\tilde{\mathbb{P}}\tmf^{[15]}$
and show the existence of a suitable \Einfty morphism
$\tilde{\mathbb{P}}\tmf^{[15]}\to k\O$.

We first require a map $\tmf^{[15]}\to k\O$ extending
the unit map $S^0\to k\O$. The existence of maps can
be shown using classical obstruction theory since the
successive obstructions lie in the groups
$H^8(\tmf^{[15]};\pi_7(k\O))$,
$H^{12}(\tmf^{[15]};\pi_{11}(k\O))$,
$H^{14}(\tmf^{[15]};\pi_{13}(k\O))$
and
$H^{15}(\tmf^{[15]};\pi_{14}(k\O))$,
all of which are trivial. For definiteness,
choose such a map $\theta\:\tmf^{[15]}\to k\O$.

Let us examine the induced $\mathcal{A}_*$-comodule
homomorphism
$\theta_*\:H_*(\tmf^{[15]})\to H_*(k\O)\subseteq\mathcal{A}_*$.
By Lemma~\ref{lem:Comodmaps} we have
\begin{align*}
\Comod_{\mathcal{A}_*}(H_*(\tmf^{[15]}),H_*(k\O))
&\iso
\Comod_{\mathcal{A}_*}(H_*(\tmf^{[15]}),
     \mathcal{A}_*\square_{\mathcal{A}(1)_*}\F_2) \\
&\iso
\Comod_{\mathcal{A}(1)_*}(H_*(\tmf^{[15]}),\F_2)\iso\F_2,
\end{align*}
so $\theta_*$ is a uniquely determined. Recall
the formulae for the coaction on $H_*(\tmf^{[15]})$
given in~\eqref{subeqn:tmf-coaction}, we find
that
\[
\theta_*(x_8) = \zeta_1^8,\quad
\theta_*(x_{12}) = \zeta_2^4,\quad
\theta_*(x_{14}) = \zeta_3^2,\quad
\theta_*(x_{15}) = \zeta_4.
\]

There is a unique extension of $\theta$ to a morphism
of \Einfty ring spectra
$\tilde{\theta}\:\tilde{\mathbb{P}}\tmf^{[15]}\to k\O$.
The homology of $\tilde{\mathbb{P}}\tmf^{[15]}$ is
given in Theorem~\ref{thm:H*Mjr}, and for $s\geq5$
\[
\tilde{\theta}_*(X_{3,s})
= \dlQ^{(2^{s-1},\ldots,2^6,2^5)}(\theta_*(x_{15}))
= \dlQ^{(2^{s-1},\ldots,2^6,2^5)}(\zeta_4)
= \zeta_s.
\]
It follows that
\[
\im\tilde{\theta}_*
= \F_2[\zeta_1^8,\zeta_2^4,\zeta_3^2,\zeta_4,\zeta_5,\ldots]
\iso H_*(\tmf).
\]

To prove the result about homotopy groups, we show
first that $\theta_*\:\pi_k(\tmf^{[15]})\to\pi_k(k\O)$
is surjective when $k=8,9,10,12$. We will use arguments
about some Toda brackets in $\pi_*(\tmf^{[15]})$ and
$\pi_*(k\O)$; similar results were used
in~\cite{AJB&JPM}*{section~7}. Given an $S$-module~$X$,
we can define Toda brackets of the form
$\langle\alpha,\beta,\gamma\rangle\subseteq\pi_{a+b+c+1}(X)$,
where $\alpha\in\pi_a(S)$, $\beta\in\pi_b(S)$ and $\gamma\in\pi_c(X)$
satisfy $\alpha\beta=0$ in $\pi_{a+b}(S)$ and $\beta\gamma=0$
in $\pi_{b+c}(X)$. The indeterminacy here is as usual
\[
\indet\langle \alpha,\beta,\gamma\rangle =
\alpha\pi_{b+c+1}(X) + \pi_{a+b+1}(S)\gamma
              \subseteq\pi_{a+b+c+1}(X).
\]

The case $k=8$ follows from the well known facts
that the Toda brackets
$\langle 16,\sigma,1\rangle\subseteq\pi_8(\tmf)$
and $\langle 16,\sigma,1\rangle\subseteq\pi_8(k\O)$
contain generators $c'_4\in\pi_8(\tmf)\iso\pi_8(\tmf^{[15]})$
and $w\in\pi_8(k\O)$ respectively. Naturality
shows that $\theta_*\:\pi_8(\tmf^{[15]})\to\pi_8(k\O)$
is surjective.

For the cases $k=9,10$ we can use mutiplication
by $\eta$ and $\eta^2$ in $\pi_*(\tmf)^{[15]}$
and $\pi_*(k\O)$ to see that
$\theta_*\:\pi_k(\tmf)^{[15]}\to\pi_k(k\O)$ is
surjective in these cases.

For $k=12$ we need to know the classical result
$\nu w=0$ well as $\nu c'_4=0$; the latter can
be read off of the Adams spectral sequence diagrams
in~\cite{TMF}*{chapter~13}. Given these facts,
it follows that the Toda brackets
$\langle 8,\nu,c'_4\rangle\subseteq\pi_{12}(\tmf)
         \iso\pi_{12}(\tmf^{[15]})$
and $\langle 8,\nu,w\rangle\subseteq\pi_{12}(k\O)$
contain generators and naturality shows that
$\theta_*\:\pi_{12}(\tmf^{[15]})\to\pi_{12}(k\O)$
is surjective.

To finish our argument, we know that when $k=8,9,10,12$
the composition
\[
\xymatrix{
\pi_k(\tmf^{[15]}) \ar[r]\ar@/^20pt/[rr]^{\theta_*}
& \pi_k(\tilde{\mathbb{P}}\tmf^{[15]})\ar[r]_(.6){\tilde{\theta}_*}
& \pi_k(k\O)
}
\]
is surjective. Using multiplication by the image
of $c'_4$ in $\pi_*(\tilde{\mathbb{P}}\tmf^{[15]})$
it is straightforward to show that
$\theta_*\:\pi_k(\tmf^{[15]})\to\pi_k(k\O)$ is
surjective for all $k>4$.
\end{proof}

In~\cite{TL&NN}, Lawson and Naumann have shown the
existence of an \Einfty map $\tmf\to k\O$ whose
restriction to $\tmf^{[15]}$ could be used in the
proof above. However, our argument does not assume
the prior existence of such a map and seems more
elementary. Indeed, our result suggests the possibility
of a more direct approach to building an \Einfty
morphism $\tmf\to k\O$ in comparison with the approach
of Lawson and Naumann: it would suffice to show
that the map $\mathcal{I}\to k\O$ from the homotopy
fibre $\mathcal{I}$ of the \Einfty morphism
$\tilde{\mathbb{P}}\tmf^{[15]}\to k\O$ was null
homotopic, so there is an \Einfty morphism
$\tmf\to k\O$ making the following diagram homotopy
commutative.
\[
\xymatrix{
\mathcal{I}\ar[d]\ar[dr] & \\
\tilde{\mathbb{P}}\tmf^{[15]}\ar[d]\ar[r]_(.57){\tilde{\theta}} & k\O \\
\tmf\ar@/_11pt/@{.>}[ur]
}
\]
To date we have been unable to make this approach
work.

\section{Some other examples}\label{sec:OtherExamp}

Our approach to proving algebraic splittings of
the homology of \Einfty Thom spectra can be used
to rederive many known results for classical
examples such as $M\O$, $M\SO$, $M\SO$, $M\Spin$,
$M\String=M\O\langle8\rangle$ and $M\U$. We can
also obtain some other new examples with these
methods.

\subsection{An example related to $k\U$}\label{subsec:Examp-kU}
Our first example is based on similar ideas to
those used to construct the spectra $Mj_r$, but
using $\Spinc$. The low dimensional homology of
$B\Spinc$ can be read off from Theorem~\ref{thm:Spinc}
and Remark~\ref{rem:Spinc}. Passing to the Thom
spectrum over the $7$-skeleton $(B\Spinc)^{[7]}$
we have for its homology
\[
H_*((M\Spinc)^{[7]}) =
\F_2\{1,a_{1,0}^{(1)},a_{1,1}^{(1)},(a_{1,0}^{(1)})^2,
                      a_{3,0}^{(1)},a_{7,0}\}.
\]
For our purposes, the fact that there are two
$4$-cells is problematic, so we instead restrict
to a smaller complex. The map
$B\Spin^{[7]}\to B\Spin^c$ induces an epimorphism
in cohomology, and the resulting map
$S^2\vee B\Spin^{[7]} \to B\Spin^c$ induces a
monomorphism in homology with image
\[
\F_2\{1,a_{1,0}^{(1)},a_{1,1}^{(1)},a_{3,0}^{(1)},a_{7,0}\}.
\]
The Thom spectrum over this space has a cell structure
of the form
\[
(S^0\cup_\eta e^2)\cup_\nu e^4\cup_\eta e^6\cup_2 e^7.
\]
\[
\xymatrix@C=0.5cm@R=0.45cm{
 &  *+[o][F]{x_7}\ar@{-}[d]_{2} \\
 & *+[o][F]{x_6}\ar@/^20pt/@{-}[dd]^{\eta}  \\
 & \\
 & *+[o][F]{x_4}\ar@/_20pt/@{-}[dddd]_{\nu}   \\
 & \\
 & *+[o][F]{x_2}\ar@/^10pt/@{-}[dd]^{\eta}  \\
 &  \\
 & *+[o][F]{1}
}
\]

The skeletal inclusion factors through an infinite
loop map
\[
\xymatrix{
S^2\vee B\Spin^{[7]}\ar[dr]\ar[rr] && B\Spin^c \\
& \dlQ(S^2\vee B\Spin^{[7]})\ar[ur]_{\jc} & \\
}
\]
and we obtain an \Einfty Thom spectrum $M\jc$ over
$\dlQ(S^2\vee B\Spin^{[7]})$ whose homology is
\[
H_*(M\jc) =
\F_2[\dlQ^{I_2}x_2,\dlQ^{I_4}x_4,\dlQ^{I_6}x_6,\dlQ^{I_7}x_7
               : \text{\rm $I_r$ admissible, $\exc(I_r)>r$}].
\]
It is easy to see that there is a morphism of \Einfty
ring spectra
\[
\widetilde{\mathbb{P}}(S^0\cup_\nu e^4\cup_\eta e^6\cup_2 e^7)
\to k\U
\]
inducing an epimorphism on $H_*(-)$ under which
\[
x_2\mapsto \zeta_1^2,
\quad
x_4\mapsto \zeta_1^4,
\quad
x_6\mapsto \zeta_2^2,
\quad
x_7\mapsto \zeta_3.
\]
The $7$-skeleton of $M\jc$ has the form
\[
\xymatrix@C=0.5cm@R=0.45cm{
&& *+[o][F]{x_7}\ar@{-}[d]_{2} && && \dlQ^5x_2\ar@/^20pt/@{-}[dd]^{\eta}
&& x_2\dlQ^3x_2\ar@/^20pt/@{-}[ddll]^{\eta} \\
*+[o][F]{x_2x_4^{\ph{2}}}\ar@/_20pt/@{-}[ddrr]_{\eta}\ar@{-}@/_25pt/[ddddrr]_{\nu}|(.84)\hole
&& *+[o][F]{x_6}\ar@/^20pt/@{-}[dd]^{\eta} && *+[o][F]{x_2^3}\ar@/^20pt/@{-}[dd]^{\eta}
&& \dlQ^4x_2\ar@{-}[d]_{2} && \\
&& && && \dlQ^3x_2 && \\
&& *+[o][F]{x_4}\ar@/_20pt/@{-}[dddd]_{\nu}
&& *+[o][F]{x_2^2}\ar@/^20pt/@{-}[lldddd]_{\nu} && && \\
&& && && && \\
&& *+[o][F]{x_2}\ar@/^10pt/@{-}[dd]^{\eta} && && && \\
&& && && &&  \\
&& *+[o][F]{1} && && &&
}
\]
since
$\pi_3(C_\eta)\iso\pi_3(S^0)/\eta\pi_1(S^0)=\pi_3(S^0)/4\pi_3(S^0)$
and the generators are detected by $\Sq^4$. It follows
that there is an element $\pi_4(M\jc)$ with Hurewicz
image $x_4+x_2^2$, and if $w\:S^4\to M\jc$ is a
representative, we can form the \Einfty cone $M\jc/\!/w$
as the pushout in the diagram
\[
\xymatrix{
\mathbb{P}S^4\ar[r]^w\ar[d]_{\tilde{w}}\ar@{}[dr]|(.3){\PO}
                                  & \mathbb{P}D^5\ar[d]  \\
{M\jc}\ar[r] & {M\jc/\!/w}
}
\]
taken in the category $\mathscr{C}_S$ of commutative
$S$-algebras. There is a K\"unneth spectral sequence
of the form
\[
\mathrm{E}^2_{s,t} =
\Tor^{H_*(\mathbb{P}S^4)}_{s,t}(\F_2,H_*(M\jc))
               \Lra H_{s+t}(M\jc/\!/w)
\]
where the $H_*(M\jc)$ is the $H_*(\mathbb{P}S^4)$-module
algebra
\[
H_*(\mathbb{P}S^4) =
\F_2[\dlQ^Iz_4 : \text{$I$ admissible, $\exc(I)>4$}]
\to H_*(M\jc);
\]
where
\[
\dlQ^Iz_4 \mapsto \dlQ^I(x_2^2) + \dlQ^Ix_4.
\]
Notice that the term $\dlQ^I(x_2^2)$ is either trivial
(if at least one term in $I$ is odd) or a square (if
all terms in $I$ are even), hence can be used as a
polynomial generator of $H_*(M\jc)$ in place of
$\dlQ^Ix_4$. It follows that $H_*(M\jc)$ is a free
$H_*(\mathbb{P}S^4)$-module, so the spectral sequence
is trivial with
\begin{align*}
\mathrm{E}^2_{*,*}
&= \Tor^{H_*(\mathbb{P}S^4)}_{0,*}(\F_2,H_*(M\jc)) \\
&= H_*(M\jc)/(\dlQ^I(x_2^2) + \dlQ^Ix_4
       : \text{$I$ admissible, $\exc(I)>4$}),
\end{align*}
therefore we have
\begin{equation}\label{eq:H*(Mjc//w)}
H_*(M\jc/\!/w) =
\F_2[\dlQ^{I_2}x_2,\dlQ^{I_6}x_6,\dlQ^{I_7}x_7
       : \text{\rm $I_r$ admissible, $\exc(I_r)>r$}].
\end{equation}
Here is the $7$-skeleton of $M\jc/\!/w$.
\[
\xymatrix@C=0.5cm@R=0.45cm{
&& *+[o][F]{x_7}\ar@{-}[d]_{2} && \dlQ^5x_2\ar@/^20pt/@{-}[dd]^{\eta}
&& x_2\dlQ^3x_2\ar@/^20pt/@{-}[ddll]^{\eta} \\
*+[o][F]{x_2^3}\ar@/^20pt/@{-}[dd]^{\eta}\ar@{-}@/_25pt/[dddd]_{\nu}|(.8)\hole
&& *+[o][F]{x_6}\ar@/^20pt/@{-}[ddll]^{\eta} && \dlQ^4x_2\ar@{-}[d]_{2} && \\
&& &&\dlQ^3x_2 && \\
*+[o][F]{x_2^2}\ar@/_20pt/@{-}[dddd]_{\nu} && && && \\
&& && && \\
*+[o][F]{x_2}\ar@/^10pt/@{-}[dd]^{\eta} && && && \\
&& && && \\
*+[o][F]{1} && && &&
}
\]

We define a sequence of elements $X_s$ in $H_*(M\jc/\!/w)$
by
\[
X_s =
\begin{dcases*}
x_2 & if $s=1$, \\
x_6 & if $s=2$, \\
x_7 & if $s=3$, \\
\dlQ^{(2^{s-1},\ldots,2^4,2^3)}x_7
   = \dlQ^{2^{s-1}}X_{s-1} & if $s\geq4$.
\end{dcases*}
\]
This is a regular sequence and the induced coaction
over the quotient Hopf algebra
\[
\mathcal{E}(1)_* =
\mathcal{A}_*/(\zeta_1^2,\zeta_2^2,\zeta_3,\ldots)
= \mathcal{A}_*/\!/\F_2[\zeta_1^2,\zeta_2^2,\zeta_3,\ldots]
= \Lambda(\zeta_1,\zeta_2)
\]
satisfies
\[
\psi'X_s =
\begin{dcases*}
1\otimes X_1 & if $s=1,2$, \\
1\otimes X_3 + \zeta_1\otimes X_2 + \zeta_2\otimes X_1^2 & if $s=3$, \\
1\otimes X_s + \zeta_1\otimes X_{s-1} + \zeta_2\otimes X_{s-2} & if $s\geq4$.
\end{dcases*}
\]
Therefore the ideal $I^{\mathrm{c}}=(X_s:s\geq1)\lhd H_*(M\jc/\!/w)$
is an $\mathcal{E}(1)_*$-invariant regular ideal.

Recall that
\[
\mathcal{A}_*\square_{\mathcal{E}(1)_*}\F_2
= \F_2[\zeta_1^2,\zeta_2^2,\zeta_3,\ldots]
\iso H_*(k\U).
\]
We have proved the following analogues of earlier
results.
\begin{prop}\label{prop:H*Mjc-extended}
There is an isomorphism of $\mathcal{A}_*$-comodule
algebras
\[
H_*(M\jc/\!/w) \xrightarrow{\;\iso\;}
\mathcal{A}_*\square_{\mathcal{E}(1)_*} H_*(M\jc/\!/w)/I^{\mathrm{c}}.
\]
\end{prop}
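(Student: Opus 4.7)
The plan is to follow the strategy of Propositions~\ref{prop:H*Mj1-extended}, \ref{prop:H*Mj2-extended}, and \ref{prop:H*Mj3-extended}, with the finite-dimensional quotient Hopf algebra $\mathcal{E}(1)_* = \Lambda(\zeta_1,\zeta_2)$ playing the role of $\mathcal{A}(r-1)_*$. First I would set up the analog of diagram~\eqref{eq:H*Mjr}: the left column is the full $\mathcal{A}_*$-coaction $\psi$ on $H_*(M\jc/\!/w)$; the middle column uses the fact, already established in the excerpt, that $I^{\mathrm{c}}$ is an $\mathcal{E}(1)_*$-invariant ideal, so that $\psi$ factors through the cotensor product; and the top row invokes the identification $\mathcal{A}_*\square_{\mathcal{E}(1)_*}N \iso (\mathcal{A}_*\square_{\mathcal{E}(1)_*}\F_2)\otimes N$ from Corollary~\ref{cor:P-coalg}, valid because $\mathcal{A}_*$ is a $P$-coalgebra and $\mathcal{E}(1)_*$ is finite dimensional.

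Next I would compute the leading terms of the full $\mathcal{A}_*$-coaction $\psi X_s$ by iterating formula~\eqref{eq:tpsiQ^rz_n} in exactly the manner of the $\tpsi\dlQ^4 x_3$ calculation in Section~\ref{subsec:H*Mj1}. A degree count (using $|X_1|=2$, $|X_2|=6$, $|X_s|=2^s-1$ for $s\geq 3$) gives
\[
\psi X_1 = \zeta_1^2\otimes 1 + 1\otimes X_1, \quad
\psi X_2 = \zeta_2^2\otimes 1 + \cdots, \quad
\psi X_s = \zeta_s\otimes 1 + \cdots \;\; (s\geq 3),
\]
with all remaining terms lying in $\mathcal{A}_*\otimes I^{\mathrm{c}}$ aside from $1\otimes X_s$. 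Since
\[
\mathcal{A}_*\square_{\mathcal{E}(1)_*}\F_2 = \F_2[\zeta_1^2,\zeta_2^2,\zeta_3,\zeta_4,\ldots],
\]
the images $\quo\psi(X_s)$ furnish polynomial generators of this subalgebra modulo $I^{\mathrm{c}}$, and together with the residues of the remaining algebra generators of $H_*(M\jc/\!/w)$ from~\eqref{eq:H*(Mjc//w)} they show that the dashed arrow into $(\mathcal{A}_*\square_{\mathcal{E}(1)_*}\F_2)\otimes H_*(M\jc/\!/w)/I^{\mathrm{c}}$ is surjective.

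To upgrade surjectivity to an isomorphism I would compare Poincar\'e series. Regularity of the sequence $\{X_s\}$ in $H_*(M\jc/\!/w)$ gives
\[
P(H_*(M\jc/\!/w)) = \Bigl(\prod_{s\geq 1}\tfrac{1}{1 - t^{|X_s|}}\Bigr)\cdot P(H_*(M\jc/\!/w)/I^{\mathrm{c}}),
\]
and the degrees $|X_s|$ match exactly those of the polynomial generators of $\mathcal{A}_*\square_{\mathcal{E}(1)_*}\F_2$, so both sides of the stated isomorphism have the same Poincar\'e series. The main obstacle is the explicit leading-term verification for $\psi X_s$ in the full $\mathcal{A}_*$-coaction: one must iterate~\eqref{eq:tpsiQ^rz_n} and the antipode formulae to confirm that outside of $\mathcal{A}_*\otimes I^{\mathrm{c}}$ the only surviving Milnor-generator contribution is $\zeta_s\otimes 1$, with no stray terms spoiling the leading-term picture. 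This is a routine but fiddly inductive calculation entirely parallel to the proof of~\eqref{eq:X1s-coaction}, after which the rest of the argument proceeds verbatim as in Proposition~\ref{prop:H*Mj1-extended}.
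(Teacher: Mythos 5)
Your proposal is correct and follows essentially the same route as the paper: the paper proves this proposition by exactly the argument of Proposition~\ref{prop:H*Mj1-extended} (diagram~\eqref{eq:H*Mjr} with $\mathcal{E}(1)_*$ in place of $\mathcal{A}(r-1)_*$, leading terms $\zeta_1^2,\zeta_2^2,\zeta_s$ of $\quo\psi X_s$ giving surjectivity, and a Poincar\'e series comparison using regularity of the $X_s$ to conclude). The degree match $|X_1|=2$, $|X_2|=6$, $|X_s|=2^s-1$ with the generators of $\F_2[\zeta_1^2,\zeta_2^2,\zeta_3,\ldots]$ is exactly the point, so no changes are needed.
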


\begin{prop}\label{prop:H*Mjc-splitting}
There is a splitting of $\mathcal{A}_*$-comodule
algebras
\[
\xymatrix{
\mathcal{A}_*\square_{\mathcal{E}(1)_*}\F_2\ar[rr]^{\iso}\ar[dr]
&& \mathcal{A}_*\square_{\mathcal{E}(1)_*}\F_2 \\
& H_*(M\jc/\!/w)\ar[ur] &
}
\]
where
$H_*(M\jc/\!/w)\to H_*(k\U)=\mathcal{A}_*\square_{\mathcal{E}(1)_*}\F_2$
is induced by a factorisation $M\jc\to M\jc/\!/w\to k\U$
of the \Einfty orientation.
\end{prop}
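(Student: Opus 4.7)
The plan is to follow the formal template used for Propositions~\ref{prop:H*Mj1-splitting} and~\ref{prop:H*Mj2-splitting}, combining Proposition~\ref{prop:H*Mjc-extended} with the universal property in Lemma~\ref{lem:Comodmaps}. First I would construct the section. By Proposition~\ref{prop:H*Mjc-extended} there is an isomorphism $H_*(M\jc/\!/w)\iso\mathcal{A}_*\square_{\mathcal{E}(1)_*}H_*(M\jc/\!/w)/I^{\mathrm{c}}$ of $\mathcal{A}_*$-comodule algebras. Consider the trivial $\mathcal{E}(1)_*$-comodule algebra map
\[
\mathcal{A}_*\square_{\mathcal{E}(1)_*}\F_2
\xrightarrow{\;\epsilon\;}\F_2
\xrightarrow{\;\text{unit}\;} H_*(M\jc/\!/w)/I^{\mathrm{c}}
\]
obtained by composing the augmentation with the algebra unit. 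Applying Lemma~\ref{lem:Comodmaps} with $A=\mathcal{A}_*$, $B=\mathcal{E}(1)_*$, $M=\mathcal{A}_*\square_{\mathcal{E}(1)_*}\F_2$, and $N=H_*(M\jc/\!/w)/I^{\mathrm{c}}$, this lifts uniquely to an $\mathcal{A}_*$-comodule algebra homomorphism
\[
s\colon \mathcal{A}_*\square_{\mathcal{E}(1)_*}\F_2
\lra \mathcal{A}_*\square_{\mathcal{E}(1)_*}H_*(M\jc/\!/w)/I^{\mathrm{c}}
\iso H_*(M\jc/\!/w).
\]

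Next I would verify that $s$ is a section of the orientation-induced map. The composite
\[
s'\colon \mathcal{A}_*\square_{\mathcal{E}(1)_*}\F_2
\xrightarrow{\;s\;} H_*(M\jc/\!/w)
\lra H_*(k\U)=\mathcal{A}_*\square_{\mathcal{E}(1)_*}\F_2
\]
is an $\mathcal{A}_*$-comodule algebra endomorphism of $\mathcal{A}_*\square_{\mathcal{E}(1)_*}\F_2$. Applying Lemma~\ref{lem:Comodmaps} once more with $N=\F_2$, such endomorphisms are in bijection with $\mathcal{E}(1)_*$-comodule algebra maps $\mathcal{A}_*\square_{\mathcal{E}(1)_*}\F_2\to\F_2$. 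Since $\mathcal{A}_*\square_{\mathcal{E}(1)_*}\F_2=\F_2[\zeta_1^2,\zeta_2^2,\zeta_3,\ldots]$ is connected with all generators in positive degree, the only such algebra map is the augmentation; this corresponds under the bijection to the identity (via the counit identity $\sum x_{(1)}\epsilon(x_{(2)})=x$). As $s'$ is an algebra endomorphism, it must therefore equal $\id$, giving the required splitting.

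I do not expect a serious obstacle. The genuine content has already been absorbed by Proposition~\ref{prop:H*Mjc-extended} (which identifies $H_*(M\jc/\!/w)$ as a cotensor product over $\mathcal{E}(1)_*$) and by the construction of the $\Einfty$ factorisation $M\jc\to M\jc/\!/w\to k\U$. What remains is a purely formal application of Lemma~\ref{lem:Comodmaps}, exactly mirroring the proofs of Propositions~\ref{prop:H*Mj1-splitting} and~\ref{prop:H*Mj2-splitting}.
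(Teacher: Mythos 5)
Your proposal is correct and matches the paper's approach: the paper proves the analogous Propositions~\ref{prop:H*Mj1-splitting} and~\ref{prop:H*Mj2-splitting} exactly by applying Lemma~\ref{lem:Comodmaps} to the trivial comodule algebra homomorphism into $H_*(M\jc/\!/w)/I^{\mathrm{c}}$, and states Proposition~\ref{prop:H*Mjc-splitting} as the same formal consequence of Proposition~\ref{prop:H*Mjc-extended}. Your extra verification that the composite endomorphism is the identity (via uniqueness in Lemma~\ref{lem:Comodmaps} and connectivity) is a correct filling-in of a detail the paper leaves implicit.
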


Of course, in principle use of the well-known lightning
flash technology of~\cites{JFA:Chicago,JFA&SBP:BSO} should
lead to a description of $H_*(M\jc/\!/w)/I^{\mathrm{c}}$
as an $\mathcal{E}(1)_*$-comodule. For example, there
are many infinite lightning flashes such as the following

\[
\xymatrix{
&& \dlQ^4x_2 \ar[dl]_{\q^0_*} && \dlQ^6x_2 \ar@{->>}[dlll]_{\q^1_*} \ar[dl]
&& \dlQ^8x_2 \ar[dl]\ar@{->>}[dlll] && \ar@{.>>}[dlll]&&&&&&&& \\
& \dlQ^3x_2 && \dlQ^5x_2 && \dlQ^7x_2 &&&&&&&&&&&
}
\]
as well as parallelograms such as
\[
\xymatrix{
&&&&&\dlQ^8\dlQ^4x_2\ar[ddl]\ar@{->>}[dll]& \\
&&&\dlQ^6\dlQ^3x_2\ar[ddll]_{\q^0_*}&&& \\
&&&&\dlQ^7\dlQ^4x_2\ar@{->>}[dlll]_{\q^1_*} &&  \\
&\dlQ^5\dlQ^3x_2 &&&&&
}
\]
which can be determined by using~\cite{Nishida}*{proposition~7.3}.

\subsection{An example related to the Brown-Peterson
                     spectrum}\label{subsec:Examp-BP}
{}From~\cite{BP-Einfinity}*{section~4} we recall
the $2$-local \Einfty ring spectrum $R_\infty$
for which there is a map of commutative ring
spectra $R_\infty\to BP$ inducing a rational
equivalence, an epimorphism
$\pi_*(R_\infty)\to\pi_*(BP)$, and $H_*(R_\infty)$
contains a regular sequence
$z_s\in H_{2^{2+1}-2}(R_\infty)$ mapping to the
generators $t_s\in H_{2^{2+1}-2}(BP)$ which in
turn map to
$\zeta_s^2\in H_{2^{2+1}-2}(H)=\mathcal{A}_{2^{2+1}-2}$
under the induced ring homomorphisms
\[
H_*(R_\infty)\to H_*(BP)\to H_*(H)=\mathcal{A}_*.
\]
We note that both of these homomorphisms are
compatible with the Dyer-Lashof operations,
even though~$BP$ is not known to be an \Einfty
ring spectrum. These elements $z_s$ have the
following coactions:
\[
\psi(z_r) =
   1\otimes z_r + \zeta_1^2\otimes z_{r-1}^2
    + \zeta_2^2\otimes z_{r-2}^{4}
    + \cdots + \zeta_{r-1}^2\otimes z_{1}^{2^{r-1}}
    + \zeta_r^2\otimes 1,
\]
and generate an ideal $I_\infty \lhd H_*(R_\infty)$.

Let
\[
\mathcal{E}_* = \mathcal{A}_*/(\zeta_i^2 : i\geq1),
\]
the exterior quotient Hopf algebra. Although
$\mathcal{E}_*$ is not finite dimensional,
it is still true that $\mathcal{A}_*$ is an
extended right $\mathcal{E}_*$-comodule,
\[
\mathcal{A}_* \iso
(\mathcal{A}_*\square_{\mathcal{E}_*}\F_2)\otimes\mathcal{E}_*.
\]
Under the induced $\mathcal{E}_*$-coaction
on $H_*(R_\infty)$, $I_\infty$ is an
$\mathcal{E}_*$-comodule ideal, therefore
$H_*(R_\infty)/I_\infty$ is an $\mathcal{E}_*$-comodule
algebra.
\begin{prop}\label{prop:Rinfty}
There is an isomorphism of commutative $\mathcal{A}_*$-comodule
algebras
\[
H_*(R_\infty) \xrightarrow{\;\iso\;}
\mathcal{A}_*\square_{\mathcal{E}_*}H_*(R_\infty)/I_\infty,
\]
and a splitting of $\mathcal{A}_*$-comodule algebras
\[
\xymatrix{
\mathcal{A}_*\square_{\mathcal{E}_*}\F_2\ar[rr]^{\iso}\ar[dr]
&& \mathcal{A}_*\square_{\mathcal{E}_*}\F_2 \\
& H_*(R_\infty)\ar[ur] &
}
\]
where $\mathcal{A}_*\square_{\mathcal{E}_*}\F_2\iso H_*(BP)$
and the right hand homomorphism is induced from the
morphism of commutative ring spectra $R_\infty\to BP$.
\end{prop}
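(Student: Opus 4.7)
The plan is to mimic the proofs of Propositions~\ref{prop:H*Mj1-extended}, \ref{prop:H*Mj2-extended}, and \ref{prop:H*Mj3-extended}, with $\mathcal{E}_*$ playing the role of $\mathcal{A}(r-1)_*$. The setup already gives us the two essential algebraic inputs: first, $I_\infty$ is an $\mathcal{E}_*$-comodule ideal (since each $z_s$ has coaction of the indicated form and the $\zeta_i^2$ are trivial in $\mathcal{E}_*$); second, $\mathcal{A}_*$ is an extended right $\mathcal{E}_*$-comodule, so the analogue of Corollary~\ref{cor:P-coalg} yields an isomorphism of left $\mathcal{A}_*$-comodules
\[
\mathcal{A}_*\square_{\mathcal{E}_*} H_*(R_\infty)/I_\infty
\iso (\mathcal{A}_*\square_{\mathcal{E}_*}\F_2)\otimes H_*(R_\infty)/I_\infty.
\]
Consequently the diagram~\eqref{eq:H*Mjr} has a verbatim analogue, and the dashed candidate arrow
\[
H_*(R_\infty)\lra \mathcal{A}_*\square_{\mathcal{E}_*} H_*(R_\infty)/I_\infty
\]
of commutative $\mathcal{A}_*$-comodule algebras is obtained by composing the coaction with the quotient by $I_\infty$, then using the $\mathcal{E}_*$-invariance of $I_\infty$ to factor through the cotensor product.

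Next I would establish surjectivity of the dashed arrow. The coaction formula
\[
\psi(z_r) =
\zeta_r^2\otimes 1 + \zeta_{r-1}^2\otimes z_1^{2^{r-1}}
+ \cdots + \zeta_1^2\otimes z_{r-1}^2 + 1\otimes z_r
\]
shows that after projecting the $H_*(R_\infty)$-factor to $H_*(R_\infty)/I_\infty$, the image of $z_r$ reduces to $\zeta_r^2\otimes 1$. Since the $\zeta_r^2$ are polynomial generators of $\mathcal{A}_*\square_{\mathcal{E}_*}\F_2 = \F_2[\zeta_1^2,\zeta_2^2,\ldots] \iso H_*(BP)$, the images of the $z_r$ together with lifts of a basis of $H_*(R_\infty)/I_\infty$ generate the target as an $\F_2$-algebra; a standard degree filtration argument then gives surjectivity.

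To upgrade surjectivity to isomorphism I would compare Poincar\'e series. Since $(z_s)_{s\geq 1}$ is a regular sequence in the polynomial ring $H_*(R_\infty)$ with $|z_s|=2^{s+1}-2$,
\[
P(H_*(R_\infty);t) = P(H_*(R_\infty)/I_\infty;t)\cdot\prod_{s\geq1}\frac{1}{1-t^{2^{s+1}-2}},
\]
while the target has Poincar\'e series $P(H_*(BP);t)\cdot P(H_*(R_\infty)/I_\infty;t)$. The two agree degreewise because $H_*(BP)=\F_2[\zeta_1^2,\zeta_2^2,\ldots]$ with $|\zeta_s^2|=2^{s+1}-2$, so the surjection is an isomorphism. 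Finally, the splitting is obtained exactly as in Proposition~\ref{prop:H*Mj1-splitting}: apply Lemma~\ref{lem:Comodmaps} to the trivial $\mathcal{E}_*$-comodule algebra composite $\mathcal{A}_*\square_{\mathcal{E}_*}\F_2 \to \F_2 \to H_*(R_\infty)/I_\infty$ to obtain an $\mathcal{A}_*$-comodule algebra lift $\mathcal{A}_*\square_{\mathcal{E}_*}\F_2 \to H_*(R_\infty)$, and check that composing with the homomorphism $H_*(R_\infty)\to H_*(BP)\iso\mathcal{A}_*\square_{\mathcal{E}_*}\F_2$ induced by $R_\infty\to BP$ is the identity (this reduces to the fact that $z_s\mapsto\zeta_s^2$ and $I_\infty$ maps to zero).

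The main obstacle, as the paper itself flags, is the non-finiteness of $\mathcal{E}_*$: Lemma~\ref{lem:P-coalg} is stated for finite-dimensional quotients, so the extended $\mathcal{E}_*$-comodule property of $\mathcal{A}_*$ must be justified directly (using that $\mathcal{E}^*$ is the union of the finite exterior quotients $\mathcal{E}(n)^*$ dual to the quotients $\mathcal{A}_*/(\zeta_i^2,\zeta_j : i\leq n,\, j>n)$, passing to the colimit). Once this is in hand the rest of the argument is essentially formal from the previous cases.
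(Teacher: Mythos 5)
Your proposal is correct and follows exactly the route the paper intends: the paper states Proposition~\ref{prop:Rinfty} without a separate proof, having already set up the two inputs you identify ($I_\infty$ being an $\mathcal{E}_*$-comodule ideal and $\mathcal{A}_*$ being an extended right $\mathcal{E}_*$-comodule), and leaves the reader to run the same surjectivity-plus-Poincar\'e-series argument as in Propositions~\ref{prop:H*Mj1-extended}--\ref{prop:H*Mj3-extended} and the Lemma~\ref{lem:Comodmaps} argument as in Proposition~\ref{prop:H*Mj1-splitting}. Your closing remark about justifying the extended-comodule property despite $\mathcal{E}_*$ being infinite-dimensional addresses the one point the paper merely asserts, and your proposed fix (an explicit basis argument or a colimit over the finite exterior quotients, in the spirit of Proposition~\ref{prop:A*-P-coalg}) is sound.
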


This result supports the view that $R_\infty$ admits
a map $BP\to R_\infty$ extending the unit $S^0\to R_\infty$
and then the composition
\[
BP \to R_\infty\to BP
\]
would necessarily be a weak equivalence since~$BP$
is minimal atomic in the sense of~\cite{AJB&JPM}.

%

\section{Speculation and conjectures}\label{sec:Speculation}

Our algebraic splittings of $H_*(Mj_r)$ are consistent
with spectrum-level splittings. Indeed, in the case
of $r=1$, a result of Mark Steinberger~\cite{LNM1176}
already shows that $Mj_1$ splits as a wedge of suspensions
of $H\Z$ and $H\Z/2^s$ for $s\geq1$, all of which are
$H\Z$-module spectra. In fact a direct argument is also
possible.

Using Lemma~\ref{lem:Splitting-Alg-gen}, it is easy
to see that if a spectrum $X$ is a module spectrum
over one of $H\Z$, $k\O$ or $\tmf$ then its homology
is a retract of the extended comodule
$\mathcal{A}_*\square_{\mathcal{A}(r)_*}H_*(X)$ for
the relevant value of~$r$; a similar observation holds
for a module spectrum over $k\U$ and
$\mathcal{A}_*\square_{\mathcal{E}(1)_*}H_*(X)$. Thus
our algebraic results provide evidence for the following
conjectural splittings.
\begin{conj}\label{conj:Splittings}
As a spectrum, $Mj_2$ is a wedge of $k\O$-module spectra,
$Mj_3$ is a wedge of $\tmf$-module spectra and $M\jc$
is a wedge of $k\U$-module spectra.
\end{conj}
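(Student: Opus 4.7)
The plan is to adapt Steinberger's argument~\cite{LNM1176} (which handles the case $r=1$) to the new module structures. The algebraic splittings of Propositions~\ref{prop:H*Mj2-extended}, \ref{prop:H*Mj3-extended}, \ref{prop:H*Mjc-extended}, together with the retractions of Propositions~\ref{prop:H*Mj2-splitting}, \ref{prop:H*Mj3-splitting}, \ref{prop:H*Mjc-splitting}, already show that $H_*(Mj_r)$ and $H_*(M\jc/\!/w)$ have the homological shape of a wedge of module spectra over $k\O$, $\tmf$, or $k\U$ respectively; the task is to lift this algebraic decomposition to the spectrum level.

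First, I would choose explicit $\mathcal{A}(r-1)_*$-comodule (resp.\ $\mathcal{E}(1)_*$-comodule) generators for the quotient $H_*(Mj_r)/I_r$ (resp.\ $H_*(M\jc/\!/w)/I^{\mathrm{c}}$); by Theorem~\ref{thm:H*Mjr} these can be taken to be explicit polynomial generators built from the classes $x_n$ and their Dyer-Lashof images. Second, for each generator $y_\alpha$ of degree $n_\alpha$, I would realise the $\mathcal{A}_*$-subcomodule of $H_*(Mj_r)$ that it generates (via the splitting) as the mod-$2$ homology of some $k\O$-module spectrum $M_\alpha$ together with a map $f_\alpha\colon M_\alpha\to Mj_r$. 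For generators in the mod-$2$ Hurewicz image of $\pi_*(Mj_r)$ the natural choice is $M_\alpha=\Sigma^{n_\alpha}k\O$, with $f_\alpha$ obtained by combining a homotopy lift of $y_\alpha$ with the multiplicative structure of $Mj_r$ and the orientation $Mj_r\to k\O$; for generators obstructed by $\eta$-, $\nu$- or $2$-multiplication one must instead take $M_\alpha$ to be an appropriate truncated $k\O$-module, by analogy with the $H\Z/2^s$ summands appearing in Steinberger's splitting. Third, the wedge $\Phi\colon\bigvee_\alpha M_\alpha\to Mj_r$ of these maps should induce an isomorphism on $H_*(-;\F_2)$ by the algebraic splitting, hence be a weak $2$-local equivalence.

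The principal obstacle is the second step. For $r=1$, Steinberger's argument exploits the fact that $H\Z$- and $H\Z/2^s$-module spectra are essentially determined by their mod-$2$ homology as $\mathcal{A}_*$-comodules, so algebraic realisations automatically lift to the spectrum level. For $r=2,3$ this classification fails: there are nontrivial extensions between $k\O$- and $\tmf$-module spectra that are invisible to mod-$2$ homology, and pinning down the correct indecomposable summands $M_\alpha$ appears to require explicit constructions of the generalised integral Brown-Gitler spectra for $k\O$ and $\tmf$ discussed in the introduction (see also~\cites{MB-KO-NS-VS:tmf*tmf,SMB:bo^tmf}). Once such spectra are in hand and their basic homological properties are established, completing the wedge decomposition should reduce to concrete obstruction problems in the classical Adams spectral sequence, controlled by the algebraic splittings proved in Section~\ref{sec:H*Mjr}.
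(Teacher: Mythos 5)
The statement you are trying to prove is stated in the paper as a \emph{conjecture}, and the paper offers no proof of it: the author explicitly writes that apart from the $H\Z$ case (settled by Steinberger's splitting theorem in~\cite{LNM1176}) there are ``no concrete results'' on these splittings, and the algebraic retractions of Sections~\ref{subsec:H*Mj2}--\ref{subsec:Examp-kU} are presented only as \emph{evidence} for the conjecture. Your proposal is therefore being measured against no existing proof, and it does not itself constitute one: you candidly identify the fatal gap in your own second step, and that gap is precisely the reason the statement remains open.

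To be concrete about why the gap is genuine and not merely technical: Steinberger's argument for $Mj_1$ works because the homotopy theory of $H\Z$-modules is degenerate --- every $H\Z$-module spectrum splits as a wedge of suspensions of $H\Z$ and $H\Z/2^s$, and such wedges are determined by their mod~$2$ homology as $\mathcal{A}_*$-comodules (equivalently by the $\mathcal{A}(0)_*$-comodule structure of the quotient $H_*(Mj_1)/I_1$, which reduces to Bockstein data). For $k\O$ and $\tmf$ nothing of the sort is available: indecomposable $k\O$- and $\tmf$-modules are not classified, their mod~$2$ homology does not determine them (there are $v_1$- and $v_2$-periodic extension phenomena invisible to $H\F_2$), and the candidate summands $M_\alpha$ you would need are the generalised integral Brown--Gitler spectra of~\cite{MB-KO-NS-VS:tmf*tmf}, whose existence and multiplicative compatibility with the maps $f_\alpha\colon M_\alpha\to Mj_r$ is itself a substantial open problem at the $\tmf$ level. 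Your third step (``$\Phi$ induces an isomorphism on $H_*$, hence is an equivalence'') is fine \emph{if} the maps $f_\alpha$ exist with the prescribed effect in homology, but constructing them is the entire content of the conjecture; the Adams spectral sequence obstructions you defer to are exactly what the paper is unable to control beyond low degrees (the author reports checking the induced map $\pi_*(Mj_3)\to\pi_*(\tmf)$ only up to degree~$26$). So your outline is a reasonable research programme --- and broadly consistent with the speculative remarks in Section~\ref{sec:Speculation}, where the author suggests Bockstein-type arguments in $k\O_*(Mj_2)$ and $\tmf_*(Mj_3)$ as a possible route --- but it should not be presented as a proof.
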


Here the phrase `module spectrum' can be interpreted either
purely homotopically, or strictly in the sense of~\cite{EKMM}.
In each case, it is enough to produce any map $E\to Mj$
extending the unit (up to homotopy), for then the \Einfty
structure on $Mj$ gives rise to a homotopy commutative
diagram of the following form.
\[
\xymatrix{
&&&& \\
S^0\wedge Mj\ar[r]\ar[dr]\ar@/^30pt/[rrrr]^{\sim} & Mj\wedge Mj\ar[r]
& \tilde{\mathbb{P}}Mj\wedge Mj\ar[r] & Mj\wedge Mj\ar[r] & Mj \\
&E\wedge Mj\ar[u]\ar[r] &\tilde{\mathbb{P}}E\wedge Mj\ar[u]&& \\
}
\]

Related to this conjecture, and indeed implied by it, is
the following where we know that analogues hold for the
cases $Mj_1$, $Mj_2$, $M\jc$, i.e., the natural homomorphisms
\[
\pi_*(Mj_1)\to \pi_*(H\Z),
\quad
\pi_*(Mj_2)\to \pi_*(k\O),
\quad
\pi_*(M\jc)\to \pi_*(k\U)
\]
are epimorphisms.
One approach to verifying these is by using the Adams spectral
sequence: in each of the first two cases the lowest degree
element in the $\mathrm{E}_2$-term not associated with the
$\mathcal{A}_*\square_{\mathcal{A}(r-1)_*}\F_2$ summand is
one of the elements $\dlQ^3x_2$ or $\dlQ^5x_4$ and this is
too far along to give elements supporting anomalous differentials
on this summand, and the multiplicative structure completes
the argument. Here is a small portion of the Adams spectral
sequence for $Mj_2$ to illustrate this, with $\dlQ^5x_4$
at position $(9,0)$ and most of the diagram being part
of the $\mathrm{E}_2$-term for $k\O$. Since
\[
\psi\dlQ^6x_4 = \zeta_1\otimes\dlQ^5x_4 + 1\otimes\dlQ^6x_4,
\]
this element  $\dlQ^5x_4$ does not produce an~$h_0$ tower;
in fact the $\mathcal{A}(1)_*$-subcomodule
\[
\F_2\{\dlQ^5x_4,\dlQ^6x_4\}\subseteq H_*(Mj_2)/I_2
\]
gives rise to a copy of the Adams $\mathrm{E}_2$-term for
$k\O\wedge(S^0\cup_2 e^1)$ carried on~$\dlQ^5x_4$.
\bigskip
\begin{center}
\begin{sseq}[arrows=latex, grid=none, entrysize=6mm]
{0...12}{0...10}
\ssinfbullstring 0 1 {10}
\ssmoveto 0 0
\ssline 1 1
\ssbullstring 1 1 {2}
\ssmoveto 4 3
\ssbullstring 0 1 {10}
\ssmoveto 8 4
\ssbullstring 0 1 {10}
\ssmoveto 8 4
\ssline 1 1
\ssbullstring 1 1 {2}
\ssmoveto 9 0
\ssdropbull
\ssdroplabel[L]{{\dlQ^5x_4}}
\ssmove 2 2
\ssdrop{}
\ssdashedcurve{0}
\ssmoveto {12} 1 \ssdrop{$\manerrarrow$}
\end{sseq}
\end{center}
In the third case, the first element not in the $k\U$
summand is $\dlQ^3x_2$ and a similar argument applies.

\begin{conj}\label{conj:HtpyEpi}
The \Einfty orientation $Mj_3\to\tmf$ induces a ring
epimorphism $\pi_*(Mj_3)\to\pi_*(\tmf)$.
\end{conj}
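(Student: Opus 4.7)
The plan is to adapt the strategy used for the analogous case $Mj_2\to k\O$ sketched in the discussion before Conjecture~\ref{conj:HtpyEpi}, combining an analysis of the mod~$2$ Adams spectral sequence with Toda bracket lifts of the classical generators of $\pi_*(\tmf)$.

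First I would exploit Proposition~\ref{prop:H*Mj3-splitting} together with the change of rings coming from the $P$-coalgebra property (Corollary~\ref{cor:P-coalg} applied to $\mathcal{A}_*$ and its finite quotient $\mathcal{A}(2)_*$) to obtain
\[
\Ext^{*,*}_{\mathcal{A}_*}(\F_2,H_*(Mj_3))
\iso\Ext^{*,*}_{\mathcal{A}(2)_*}(\F_2,H_*(Mj_3)/I_3).
\]
The augmentation $H_*(Mj_3)/I_3\to\F_2$ then splits off the Adams $\mathrm{E}_2$-term of $\tmf$ as a retract of the Adams $\mathrm{E}_2$-term of $Mj_3$, and naturality of the Adams spectral sequence with respect to $Mj_3\to\tmf$ ensures the (notoriously intricate) $\tmf$-differentials persist verbatim on this retract.

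Second, I would identify the lowest-dimensional polynomial generators of $H_*(Mj_3)/I_3$ lying in the augmentation ideal. By inspection of Theorem~\ref{thm:H*Mjr} and the sequence $X_{3,s}$ of \S\ref{subsec:H*Mj3}, the first such generator is $\dlQ^9x_8$ in dimension~$17$, with further generators sparsely distributed in nearby degrees, sitting at filtration~$0$. Since $Mj_3\homeq\tilde{\mathbb{P}}\tmf^{[15]}$ already realises surjectivity on $\pi_k$ for $k\leq 15$, this should suffice to rule out anomalous differentials from the complementary summand into the low-dimensional portion of the $\tmf$-retract, and the multiplicative structure propagates the conclusion to higher degrees.

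Third, I would lift the classical ring generators of $\pi_*(\tmf)$ such as $c_4',c_6',\Delta$ and their $\eta$-, $\nu$-, $\epsilon$- and $\kappa$-multiples by Toda bracket arguments as in the proof of Theorem~\ref{thm:tmf->kO}. Since low-dimensional generators are already lifted, naturality of Toda brackets $\langle\alpha,\beta,\gamma\rangle$ with $\alpha,\beta\in\pi_*(S^0)$ transports further generators of $\pi_*(\tmf)$ back to $\pi_*(Mj_3)$; once ring generators through the $\Delta^8$-periodicity range are lifted, the $\mathcal{E}_\infty$ multiplicative structure on $Mj_3$ closes the argument.

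The main obstacle will be the second step: in contrast to $k\O$, the Adams spectral sequence for $\tmf$ supports differentials on at least five pages with many classes in every filtration, so ruling out spurious incoming differentials on the $\tmf$-summand from the complementary polynomial factor requires a delicate bidegree analysis, particularly in the periodic range involving $\Delta^8$-multiples where the density of classes makes the combinatorics intricate. A more structural alternative would be to establish Conjecture~\ref{conj:Splittings} for $Mj_3$, since a splitting of $Mj_3$ as a wedge of $\tmf$-module spectra containing a single copy of $\tmf$ immediately forces the desired surjectivity on homotopy.
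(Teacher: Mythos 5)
First, a point of orientation: the statement you are addressing is stated in the paper as Conjecture~\ref{conj:HtpyEpi}, and the paper offers \emph{no proof}. The author records only that it is easily checked through degree~$16$, that it holds rationally, that it has been verified by direct computation up to degree~$26$, and that it would follow from the (equally open) splitting Conjecture~\ref{conj:Splittings}. So there is no paper proof to compare against. What you have written is a strategy rather than a proof, and it reproduces essentially the strategy the author sketches in Section~\ref{sec:Speculation} for the settled cases $Mj_1$, $Mj_2$ and $M\jc$ --- together with the author's own explanation of why it has not been pushed through for $Mj_3$.

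The genuine gap is your second step. The change-of-rings isomorphism (which rests on Proposition~\ref{prop:H*Mj3-extended} rather than on Proposition~\ref{prop:H*Mj3-splitting}) does exhibit $\Ext_{\mathcal{A}(2)_*}^{*,*}(\F_2,\F_2)$ as an algebraic retract of the Adams $\mathrm{E}_2$-term of $Mj_3$, but that retraction is not induced by any map of spectra $\tmf\to Mj_3$, so naturality does \emph{not} make the $\tmf$-differentials ``persist verbatim on this retract'': naturality along $Mj_3\to\tmf$ only constrains the images of differentials in $\mathrm{E}_r(\tmf)$, and says nothing about differentials whose source lies in the complementary summand and whose target lies in the $\tmf$-retract. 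Excluding exactly those differentials is what surjectivity on $\mathrm{E}_\infty$ requires. For $k\O$ this works because the $k\O$ summand is a sparse union of $h_0$- and $h_1$-towers and the first complementary generator ($\dlQ^5x_4$ at position $(9,0)$) is too far along to reach them, after which $h_0$- and $h_1$-multiplication finishes the argument. For $\tmf$ the summand $\Ext_{\mathcal{A}(2)_*}^{*,*}(\F_2,\F_2)$ is dense across a wide range of filtrations and supports differentials on many pages, the complementary polynomial generators occur in every degree from~$17$ onwards, $\pi_*(\tmf)$ needs ring generators out to degree~$192$ (through $\Delta^8$), and multiplicativity does not propagate surjectivity from degrees $\leq16$ to those generators. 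Your own ``main obstacle'' paragraph concedes precisely this point, which is to say the proposal does not close the argument --- it restates why the statement is a conjecture. (Your third step is similarly optimistic: Toda-bracket lifts of $8\Delta$, $\kappa$ and the other generators in the style of Theorem~\ref{thm:tmf->kO} are asserted, not carried out.) The honest conclusions available are the ones the paper records: truth in degrees $\leq26$, rational truth, and implication from Conjecture~\ref{conj:Splittings}.
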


This is easily seen to be true up to degree~$16$, and
it also holds rationally. To go further seems to require
detailed examination of the Adams spectral sequences
for $\pi_*(Mj_3)$ and $\pi_*(\tmf)$, and to date we
have checked it up to degree~$26$. Of course this
conjecture is implied by the above splitting conjecture.

To understand how the splitting question might be resolved,
let us examine the settled case of $Mj_1$. This provides
a universal example for the general splitting result of
Steinberger~\cite{LNM1176}*{theorem~III.4.2}, and the
general case is implied by that of $Mj_1$. Since
\[
H_*(Mj_1)\iso\mathcal{A}_*\square_{\mathcal{A}(0)_*}H_*(Mj_1)/I_1,
\]
we have
\[
\Ext_{\mathcal{A}_*}^{*,*}(\F_2,H_*(Mj_1)) \iso
\Ext_{\mathcal{A}(0)_*}^{*,*}(\F_2,H_*(Mj_1)/I_1).
\]
Following the strategy of Steinberger's proof for the general
case, we consider the $\mathcal{A}(0)_*$-comodule structure
of $H_*(Mj_1)/I_1$, or equivalently its $\mathcal{A}(0)^*$-module
structure. Of course here there is only one copy of $H\Z$, the
remaining summands are suspensions of $H\Z/2^r$ for various~$r$.

The Bockstein spectral sequence for $H_*(Mj_1;\Z_{(2)})$
can be determined from this using formulae for higher
Bocksteins of~\cite{JPM:SteenrodOps}*{proposition~6.8},
which we learnt about from Rolf Hoyer and Peter May.

Let $E$ be a connective finite type $2$-local \Einfty
ring spectrum and let $x\in H_{2m}(E)$ where $m\in\Z$.
Writing $\beta_k$ for the $k$-th higher Bockstein
operation, and assuming that $\beta_{k-1}x$ is defined,
we have
\begin{equation}\label{eq:HigherBockstein}
\beta_k(x^2) =
\begin{dcases*}
x\beta_1x + \dlQ^{2m}(\beta_1x)  & if $k=2$, \\
x\beta_{k-1}x & if $k>2$.
\end{dcases*}
\end{equation}
These formulae determine higher differentials in the
Bockstein spectral sequence for $H_*(E;\Z_{(2)})$.
The first differential $\beta_1=\Sq^1_*$ is given on
polynomial generators by
\begin{align}
\beta_1\dlQ^Ix_2 &=
\begin{dcases*}
\dlQ^{(i_1-1,i_2,\ldots,i_k)}x_2
& if $k>0$ and $I=(i_1,i_2,\ldots,i_k)$ with $i_1$ even, \\
0 & otherwise,
\end{dcases*}
                           \label{eqn:beta1-x2} \\
\beta_1\dlQ^Ix_3 &=
\begin{dcases*}
x_2 & if $I=()$ is the empty sequence, \\
\dlQ^{(i_1-1,i_2,\ldots,i_k)}x_3
& if $k>0$ and $I=(i_1,i_2,\ldots,i_k)$ with $i_1$ even, \\
0 & otherwise.
\end{dcases*}
                           \label{eqn:beta1-x3}
\end{align}
In each of the cases with $i_1$ even, $\dlQ^{(i_1-1,i_2,\ldots,i_k)}x_s$
is a polynomial generator except when $i_1=i_2+\cdots+i_k+s+1$
and then
\[
\beta_1\dlQ^Ix_s = (\dlQ^{(i_2,\ldots,i_k)}x_s)^2.
\]
As a dga with respect to $\beta_1$, $H_*(Mj_1)$ is a
tensor product of acyclic subcomplexes of the form
$\F_2[\beta_1\dlQ^Ix_s,\dlQ^Ix_s]$ where $s=2,3$ and
$I=(i_1,\ldots,i_k)\neq()$ with $i_1$ even, together
with $\F_2[x_2,x_3]$ and the polynomial ring generated
by the squares not already accounted for. In particular,
the $\mathrm{E}^2$-term of the Bockstein spectral sequence
agrees with the $\beta_1$-homology of $H_*(Mj_1)/I_1$.
The higher Bocksteins now follow from the above formulae
\eqref{eqn:beta1-x2} and \eqref{eqn:beta1-x3}.

This approach might be generalised to the cases of $Mj_2$,
$Mj_3$ and $M\jc$ by studying suitable Bockstein spectral
sequences for $k\O_*(Mj_2)$, $\tmf_*(Mj_3)$ and $k\U_*(M\jc)$.
We remark that the \Einfty ring spectra $H\Z\wedge Mj_1$,
$k\O\wedge Mj_2$ and $\tmf\wedge Mj_3$ can be identified
in different guises using the Thom diagonals associated
with the \Einfty orientations $Mj_1\to H\Z$, $MJ_2\to k\O$
and $Mj_3\to \tmf$, giving weak equivalences of \Einfty
ring spectra
\begin{align*}
H\Z\wedge Mj_1&\xrightarrow{\;\sim\;}H\Z\wedge\Sigma^\infty_+\dlQ(B\SO^{[3]}), \\
k\O\wedge Mj_2&\xrightarrow{\;\sim\;}k\O\wedge\Sigma^\infty_+\dlQ(B\Spin^{[7]}), \\
\tmf\wedge Mj_3&\xrightarrow{\;\sim\;}\tmf\wedge\Sigma^\infty_+\dlQ(B\String^{[15]}),
\end{align*}
and there are isomorphisms of $\mathcal{A}_*$-comodule algebras
\begin{align*}
H_*(H\Z\wedge Mj_1) &\xrightarrow{\iso}
\mathcal{A}_*\square_{\mathcal{A}(0)_*}H_*(\dlQ(B\SO^{[3]})), \\
H_*(k\O\wedge Mj_2) &\xrightarrow{\iso}
\mathcal{A}_*\square_{\mathcal{A}(1)_*}H_*(\dlQ(B\Spin^{[7]})), \\
H_*(\tmf\wedge Mj_3) &\xrightarrow{\iso}
\mathcal{A}_*\square_{\mathcal{A}(2)_*}H_*(\dlQ(B\String^{[15]})).
\end{align*}

\bigskip
The Referee has raised the question of whether
the approach of Subsection~\ref{subsec:Examp-kU}
can be used to produce an \Einfty Thom spectrum
related to $\tmf_1(3)$ as $M\jc$ is related to
$k\U$. We recall from~\cite{TL&NN} that there
is a commutative diagram of $2$-local \Einfty
ring spectra
\[
\xymatrix{
\tmf\ar[r]\ar[d] & k\O\ar[d] \\
\tmf_1(3)\ar[r] & k\U
}
\]
On applying $H_*(-)$, this induces the following
diagram of $\mathcal{A}_*$-comodule subalgebras
of~$\mathcal{A}_*$.
\[
\xymatrix{
\mathcal{A}_*\square_{\mathcal{A}(2)_*}\F_2\ar@{=}[r]
& \F_2[\zeta_1^8,\zeta_2^4,\zeta_3^2,\zeta_4,\ldots]
\ar@{^{(}->}[d]\ar@{^{(}->}[r]
&\F_2[\zeta_1^4,\zeta_2^2,\zeta_3,\ldots]\ar@{^{(}->}[d]
& \mathcal{A}_*\square_{\mathcal{A}(1)_*}\F_2\ar@{=}[l]  \\
\mathcal{A}_*\square_{\mathcal{E}(2)_*}\F_2\ar@{=}[r]
&\F_2[\zeta_1^2,\zeta_2^2,\zeta_3^2,\zeta_4,\ldots]\ar@{^{(}->}[r]
&\F_2[\zeta_1^2,\zeta_2^2,\zeta_3,\ldots]
&\mathcal{A}_*\square_{\mathcal{E}(1)_*}\F_2\ar@{=}[l]
}
\]
We propose using the space
\[
S^2\vee B\Spin^{[6]}\vee B\O\langle8\rangle^{[15]}
\]
which admits a map to $B\Spinc$ that restricts
to a map inducing an epimorphism in cohomology
on each wedge summand. Extending this to an
infinite loop map
\[
j\:\dlQ(S^2\vee B\Spin^{[6]}\vee B\O\langle8\rangle^{[15]})
 \to B\Spinc \to B\SO
\]
we obtain an \Einfty Thom spectrum~$Mj$.
\begin{conj}\label{conj:tmf1(3)}
There is an \Einfty morphism~$Mj\to\tmf_1(3)$
which factors through an \Einfty $3$-cell
complex $Mj/\!/w_4,w_8,w_{12}$ with \Einfty
cells of dimensions~$5$, $9$ and~$13$ attached
by maps $w_4,w_8,w_{12}$. Moreover the morphism
$Mj/\!/w_4,w_8,w_{12}\to\tmf_1(3)$ induces
an epimorphism on $H_*(-)$ which is an
isomorphism up to degree~$15$.
\end{conj}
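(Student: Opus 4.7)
The plan is to follow the template developed for $M\jc$ in Subsection~\ref{subsec:Examp-kU}, replacing $k\U$ by $\tmf_1(3)$ throughout. First, I would construct a map of spectra $S^2 \vee B\Spin^{[6]} \vee B\O\langle 8\rangle^{[15]} \to \tmf_1(3)$ extending the unit $S^0 \to \tmf_1(3)$. As in the proof of Theorem~\ref{thm:tmf->kO}, the existence of such a map reduces to obstruction theory: the successive obstructions live in groups $H^n(-\,;\pi_{n-1}(\tmf_1(3)))$ with $n$ running over the cell dimensions $2,4,6,8,12,14,15$, and one checks these vanish using the known low-dimensional homotopy of $\tmf_1(3)$. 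The universal property of the reduced free commutative $S$-algebra functor $\tilde{\mathbb{P}}$ from~\cite{TAQ-PX} then extends this uniquely to an \Einfty morphism $Mj \to \tmf_1(3)$.

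Next, Theorem~\ref{thm:H*Mjr} identifies $H_*(Mj)$ as a polynomial algebra on admissible Dyer-Lashof classes of generators $x_2, x_4, x_6, x_8, x_{12}, x_{14}, x_{15}$. Composing with the orientation $\tmf_1(3) \to H\F_2$ into $H_*(\tmf_1(3)) = \mathcal{A}_*\square_{\mathcal{E}(2)_*}\F_2 = \F_2[\zeta_1^2, \zeta_2^2, \zeta_3^2, \zeta_4, \zeta_5, \ldots]$, the degree $2,6,14,15$ generators must map to the polynomial generators $\zeta_1^2, \zeta_2^2, \zeta_3^2, \zeta_4$, while $x_4, x_8, x_{12}$ necessarily hit the already-present squares $\zeta_1^4 = (\zeta_1^2)^2$, $\zeta_1^8 = (\zeta_1^2)^4$, $\zeta_2^4 = (\zeta_2^2)^2$, which coincide with the images of $x_2^2, x_2^4, x_6^2$. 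Mimicking the homotopy lift argument which produces $w \in \pi_4(M\jc)$ in Subsection~\ref{subsec:Examp-kU} from $\pi_3(C_\eta) \iso \pi_3(S^0)/4\pi_3(S^0)$, I would produce classes $w_4 \in \pi_4(Mj)$, $w_8 \in \pi_8(Mj)$, $w_{12} \in \pi_{12}(Mj)$ with Hurewicz images $x_4 + x_2^2$, $x_8 + x_2^4$ and $x_{12} + x_6^2$ respectively, each in the kernel of $\pi_*(Mj) \to \pi_*(\tmf_1(3))$.

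Iterated pushouts in $\mathscr{C}_S$ along the $w_i$ produce the \Einfty 3-cell complex $Mj/\!/w_4, w_8, w_{12}$ together with a factorisation $Mj \to Mj/\!/w_4, w_8, w_{12} \to \tmf_1(3)$. The homology is computed by three applications of the K\"unneth spectral sequence
\[
\mathrm{E}^2_{s,t} = \Tor^{H_*(\mathbb{P}S^n)}_{s,t}(\F_2, H_*(E)) \Rightarrow H_{s+t}(E/\!/w),
\]
exactly as in Subsection~\ref{subsec:Examp-kU}. The key point at each stage is that replacing $\dlQ^I x_4$, $\dlQ^J x_8$, $\dlQ^K x_{12}$ by $\dlQ^I x_4 + \dlQ^I(x_2^2)$, $\dlQ^J x_8 + \dlQ^J(x_2^4)$, $\dlQ^K x_{12} + \dlQ^K(x_6^2)$ still yields a set of polynomial generators (the extra terms are either zero or squares of other generators), so the intermediate $H_*(E)$ is free over $H_*(\mathbb{P}S^n)$ and each spectral sequence collapses. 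The outcome is that $H_*(Mj/\!/w_4, w_8, w_{12})$ is the polynomial algebra on admissible Dyer-Lashof classes of $x_2, x_6, x_{14}, x_{15}$, which maps surjectively onto $H_*(\tmf_1(3))$; a Poincar\'e series count cell by cell shows the map is an isomorphism through degree~$15$ (both sides have ranks $1,1,1,2,2,3,4,1$ in degrees $\le 15$).

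The main obstacle is the simultaneous construction of the lifts $w_4, w_8, w_{12}$ inside the kernel of the morphism to $\tmf_1(3)$. The $w_4$ case parallels the $k\U$ argument of Subsection~\ref{subsec:Examp-kU}, but $w_8$ and $w_{12}$ require a more delicate analysis of the long exact sequences in homotopy for successive mapping cones within $Mj$, combined with information about Toda brackets in $\pi_*(\tmf_1(3))$ along the lines of the bracket arguments in the proof of Theorem~\ref{thm:tmf->kO}. A secondary technical point is checking the freeness hypothesis needed for each K\"unneth spectral sequence to degenerate, though this follows the same pattern as the $M\jc/\!/w$ case once the Dyer-Lashof module structure on each intermediate homology is understood.
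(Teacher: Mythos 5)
The statement you are proving is labelled as a \emph{conjecture}, and the paper offers no proof of it: immediately after stating it the author writes that he has ``not yet checked all the details'' and only finds it ``plausible that the approach used for $M\jc$ offers a route to doing this.'' Your proposal is exactly that suggested route --- the same space $S^2\vee B\Spin^{[6]}\vee B\O\langle8\rangle^{[15]}$, the same identification of the generators $x_2,x_6,x_{14},x_{15}$ with $\zeta_1^2,\zeta_2^2,\zeta_3^2,\zeta_4$ and of $x_4,x_8,x_{12}$ with squares, and the same K\"unneth/regular-sequence mechanism for computing $H_*$ of the iterated \Einfty cone --- so as a \emph{plan} it is faithful to the paper. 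But it is not a proof, and the place where it stops short is precisely the content of the conjecture. The existence of classes $w_8\in\pi_8(Mj)$ and $w_{12}\in\pi_{12}(Mj)$ with Hurewicz images $x_8+x_2^4$ and $x_{12}+x_6^2$ is asserted, not established: the $M\jc$ argument for $w$ used that both $4$-cells are attached to the bottom cell by $\nu$, and no analogous attaching-map analysis is carried out for the $8$- and $12$-cells here. Moreover, ``each in the kernel of $\pi_*(Mj)\to\pi_*(\tmf_1(3))$'' does not follow from the Hurewicz computation: $\pi_*(\tmf_1(3))\iso\Z_{(2)}[a_1,a_3]$ is torsion-free, so for instance $\pi_4(\tmf_1(3))\iso\Z$ with generator of Adams filtration~$2$ and trivial mod~$2$ Hurewicz image; showing that the image of $w_4$ is actually zero (rather than a nonzero multiple of $a_1^2$) requires a separate argument, and without it the factorisation through the pushout does not exist.

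There is also a concrete error in the final step. Your own computation gives
\[
H_*(Mj/\!/w_4,w_8,w_{12}) \iso
\F_2[\dlQ^{I}x_2,\dlQ^{J}x_6,\dlQ^{K}x_{14},\dlQ^{L}x_{15}
 : \text{admissible, } \exc(I)>2,\ \exc(J)>6,\ \exc(K)>14,\ \exc(L)>15],
\]
which contains the generator $\dlQ^3x_2$ in degree~$5$ and gives rank~$3$ in degree~$6$ (spanned by $x_2^3$, $\dlQ^4x_2$, $x_6$), whereas $H_*(\tmf_1(3))=\F_2[\zeta_1^2,\zeta_2^2,\zeta_3^2,\zeta_4,\ldots]$ vanishes in degree~$5$ and has rank~$2$ in degree~$6$. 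So the asserted equality of Poincar\'e series through degree~$15$ (your list $1,1,1,2,2,3,4,1$ matches neither side) is false, and your argument cannot yield the ``isomorphism up to degree~$15$'' clause; at best it yields the epimorphism, with the extra classes $\dlQ^Ix_2$, etc., lying in the kernel. If that clause of the conjecture is to be salvaged it must be reinterpreted (e.g.\ as a statement about the quotient by the invariant ideal, or about the restriction to the bottom skeleton), and in any case it needs an argument different from the one you give.
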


We have not yet checked all the details,
but it seems plausible that the approach
used for $M\jc$ offers a route to doing
this. Of course we might then expect a
splitting of $Mj/\!/w_4,w_8,w_{12}$ into
$\tmf_1(3)$-module spectra, or at least
that the map $Mj/\!/w_4,w_8,w_{12}\to\tmf_1(3)$
induces an epimorphism on $\pi_*(-)$.

\bigskip
\appendix
\section{On the homology of connective covers
of $B\O$}\label{sec:HomConnCov}

We review the structure of the homology Hopf algebras
$H_*(B\O\langle n\rangle;\F_2)$ for $n=1,2,4,8$. The
dual cohomology rings were originally determined by
Stong, but later a body of literature due to Bahri,
Kochman, Pengelley as well as the present author
evolved describing these homology rings. We will use
the Husemoller-Witt decompositions of~\cite{H-W&StAlg}
to give explicit algebra generators; the actions of
Steenrod and Dyer-Lashof operations on these can be
determined using work of Kochman and
Lance~\cites{SOK:DLops,TL:St&DLops}.

We recall that there are polynomial generators
$a_{k,s}\in H_{2^sk}(B\O)$ ($k$ odd, $s\geq0$)
such that
\[
\mathrm{B}[k]_* = \F_2[a_{k,s}:s\geq0]
     \subseteq H_*(B\O)
\]
is a polynomial sub-Hopf algebra and there
is a decomposition of Hopf algebras
\[
H_*(B\O) =
\bigotimes_{\text{$k$ odd}}\mathrm{B}[k]_*.
\]

For each odd~$k$ there is an isomorphism
of Hopf algebras
\[
\mathrm{B}[k]^*/(a_{k,0})
      \iso\Hom(\mathrm{B}^{(1)}[k]_*,\F_2),
\]

Here the dual Hopf algebra
$\mathrm{B}[k]^*=\Hom(\mathrm{B}[k]_*,\F_2)$
is isomorphic to $\mathrm{B}[k]_*$, i.e.,
these are self dual Hopf algebras. There is
also a decomposition of Hopf algebras
\[
H^*(B\O) = \bigotimes_{\text{$k$ odd}}B[k]^*.
\]

For each $h\geq1$, there is a monomorphism of
Hopf algebras which multiplies degrees by~$2^h$,
\[
\mathrm{B}[k]_*\to\mathrm{B}[k]_*;
\quad
x\mapsto x^{(h)}= x^{2^h},
\]
whose image is denoted by $\mathrm{B}^{(h)}[k]_*$.
Notice that the primitives in $\mathrm{B}^{(h)}[k]_*$
are the powers
\[
(a^{(h)}_{k,0})^{2^s} = (a_{k,0})^{2^{s+h}}
                           \quad (s\geq0).
\]
Dually there is an epimorphism of Hopf algebras
\[
\mathrm{B}[k]^*\to\mathrm{B}[k]^*;
\quad
a_{k,s}\mapsto
\begin{dcases*}
a_{k,s-h} & if $s\geq h$, \\
\ph{abc}
0 & if $s<h$,
\end{dcases*}
\]
and this induces an isomorphism of Hopf algebras
\[
\mathrm{B}[k]^*/(a_{k,0},a_{k,1},\ldots,a_{k,h-1})
  \iso \mathrm{B}[k]^*
\]
which divides degrees by $2^h$. The dual Hopf algebra
of $\mathrm{B}^{(h)}[k]_*$ is
\[
\mathrm{B}^{(h)}[k]^* \iso
\mathrm{B}[k]^*/(a_{k,0},a_{k,1},\ldots,a_{k,h-1}).
\]

Let $\alpha=\alpha_2$ denote the dyadic number
function which counts the number of non-zero
coefficients in the binary expansion of a natural
number.

\begin{thm}\label{thm:H*BO<n>}
The natural infinite loop maps
$B\O\langle n\rangle\to B\O\langle1\rangle=B\O$
$(n=2,4,8)$ induce monomorphisms of Hopf algebras
$H_*(B\O\langle n\rangle)\to H_*(B\O)$ whose
images are the following sub-Hopf algebras of
$H_*(B\O)$:
\begin{align*}
&\mathrm{B}^{(1)}[1]_*\otimes
      \bigotimes_{\text{\rm odd $k>1$}}\mathrm{B}[k]_*
&\quad &\text{\rm if $n=2$},  \\
&\mathrm{B}^{(2)}[1]_*\otimes
\bigotimes_{\substack{\text{\rm $k$ odd} \\
\alpha(k)=2}}\mathrm{B}^{(1)}[k]_*\otimes
\bigotimes_{\substack{\text{\rm $k$ odd} \\
\alpha(k)>2}}\mathrm{B}[k]_*
&\quad &\text{\rm if $n=4$},  \\
&\mathrm{B}^{(3)}[1]_*\otimes
\bigotimes_{\substack{\text{\rm $k$ odd} \\
\alpha(k)=2}}\mathrm{B}^{(2)}[k]_*
\otimes
\bigotimes_{\substack{\text{\rm $k$ odd} \\
\alpha(k)=3}}\mathrm{B}^{(1)}[k]_*
\otimes\bigotimes_{\substack{\text{\rm $k$ odd} \\
\alpha(k)>3}}\mathrm{B}[k]_*
&\quad &\text{\rm if $n=8$}.
\end{align*}
\end{thm}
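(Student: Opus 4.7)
The plan is to work factor-by-factor across the Husemoller-Witt decomposition $H^*(B\O) = \bigotimes_{k \text{ odd}} B[k]^*$. At each stage of the Whitehead tower $B\O\langle 8\rangle \to B\O\langle 4\rangle \to B\O\langle 2\rangle \to B\O$ I would identify which polynomial generators $a_{k,s}^* \in B[k]^*$ map to zero in $H^*(B\O\langle 2^r\rangle)$, and then dualize. Concretely, for each odd $k$, the claim to establish is that the number of consecutive ``lowest'' generators $a_{k,0}^*, \ldots, a_{k, h-1}^*$ killed upon passing to $B\O\langle 2^r\rangle$ is
\[
h(k, r) = \max\bigl(0,\, r - \alpha(k) + 1\bigr),
\]
since by the recalled duality $B[k]^*/(a_{k,0}, \ldots, a_{k,h-1}) \cong (B^{(h)}[k]_*)^*$ this produces precisely the tensor factors $B^{(h)}[k]_*$ appearing in each case of the theorem.

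For the first stage, the fibration $B\SO \to B\O \to K(\Z/2, 1)$ yields $H^*(B\SO) = H^*(B\O)/(w_1)$, and since $w_1 = a_{1,0}^*$ lies in $B[1]^*$ alone, the formula gives $h(1, 1) = 1$ and $h(k, 1) = 0$ for $k > 1$, immediately handling this case. For the second and third stages, the relevant fiber sequences are
\[
B\Spin \to B\SO \to K(\Z/2, 2), \qquad B\String \to B\Spin \to K(\Z, 4),
\]
and the kernels in mod-$2$ cohomology are the ideals generated by the Steenrod-algebra orbit of the transgressed fundamental class (namely $w_2$ and, respectively, the mod-$2$ reduction of $\tfrac{p_1}{2}$). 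The heart of the argument is to show that this Steenrod orbit, intersected with each Husemoller-Witt factor $B[k]^*$, is precisely the ideal $(a_{k,0}^*, \ldots, a_{k, r-\alpha(k)}^*)$ when $\alpha(k) \leq r$, and is trivial otherwise. This identification can be done using the Wu formula together with the Kochman-Lance description of the $\Sq^I$-action on the Husemoller-Witt generators, or by direct comparison with Stong's explicit polynomial presentations of $H^*(B\SO)$, $H^*(B\Spin)$ and $H^*(B\String)$, or by invoking Pengelley's refinements.

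The main obstacle is the Steenrod-algebra bookkeeping in the second and third stages: the orbit of a single class must exhaust precisely the prescribed sub-ideal of each $B[k]^*$, and no more. The underlying pattern is that $w_k$ for odd $k$ of binary complexity $\alpha(k) = j$ arises by applying a length-$(j-1)$ admissible Steenrod monomial to the generators of smaller Witt depth, so killing the lowest generators in degrees below $2^{r+1}$ propagates through exactly $r - \alpha(k) + 1$ layers of each $B[k]^*$ whenever $\alpha(k) \leq r$. Once this correspondence is verified, the tensor decomposition of $H_*(B\O)$, the self-duality of each $B[k]$, and the already-established isomorphism $B[k]^*/(a_{k,0}, \ldots, a_{k,h-1}) \cong (B^{(h)}[k]_*)^*$ combine formally to give the stated image descriptions for $n = 2, 4, 8$.
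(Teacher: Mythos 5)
The paper does not actually prove Theorem~\ref{thm:H*BO<n>}: Appendix~\ref{sec:HomConnCov} is a review, and the statement is justified by citation to Stong's computation of $H^*(B\O\langle n\rangle;\F_2)$ and to the Husemoller--Witt decomposition literature (Kochman, Lance, Pengelley, and \cite{H-W&StAlg}). So there is no in-paper argument to compare against; your outline is essentially a reconstruction of how those references establish the result, and the overall plan --- climb the Whitehead tower, identify the kernel of $H^*(B\O)\to H^*(B\O\langle 2^r\rangle)$ as the ideal generated by the $\mathcal{A}^*$-orbit of the transgressed class, match that ideal against the Husemoller--Witt filtration via the formula $h(k,r)=\max(0,r-\alpha(k)+1)$, and dualise using $\mathrm{B}[k]^*/(a_{k,0},\ldots,a_{k,h-1})\iso\mathrm{B}^{(h)}[k]^*$ --- is the standard and correct one. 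Your exponent bookkeeping reproduces the three displayed images exactly.

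There is, however, one substantive step you pass over silently, and it is not the one you flag as ``the main obstacle.'' By framing the problem as ``identify which generators of $H^*(B\O)$ die, then dualise,'' you presuppose that $H^*(B\O)\to H^*(B\O\langle n\rangle)$ is \emph{surjective} for $n=2,4,8$ --- equivalently, that in the Serre spectral sequences of $K(\Z/2,1)\to B\SO\to B\O$, $K(\Z/2,2)\to\ldots$ wait, rather of the fibrations $K(\Z/2,1)\to B\O\to K(\Z/2,1)$, $K(\Z/2,2)\to B\SO\to K(\Z/2,2)$ and $K(\Z,3)\to B\String\to B\Spin$, every class of the fibre transgresses and nothing from $H^*(K(\Z/2,2))$ or $H^*(K(\Z,3))$ survives to contribute new polynomial generators. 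This amounts to showing that $H^*(K(\Z/2,2))\to H^*(B\SO)$ and $H^*(K(\Z,4))\to H^*(B\Spin)$ are injective with free (Borel-type) module structure on the target; it is the genuinely hard analytic input in Stong's theorem, and it is exactly what \emph{fails} for the next cover $B\O\langle16\rangle$, where the homology map is no longer a monomorphism onto a sub-Hopf algebra of $H_*(B\O)$. So it cannot be absorbed into the dualisation step as a formality: either prove the collapse directly (Stong's argument) or cite it explicitly alongside the Kochman--Lance computation you already invoke for the combinatorial matching of the orbit ideal with $(a_{k,0},\ldots,a_{k,r-\alpha(k)})$. With that input made explicit, the rest of your plan goes through.
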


By dualising and using the above observations
we obtain Hopf algebra decompositions of the
cohomology of these spaces. For example,
\begin{align*}
H^*(B\SO) = H^*(B\O\langle2\rangle)&=
\mathrm{B}^{(1)}[1]^*
\otimes \bigotimes_{\text{\rm odd $k>1$}}\mathrm{B}[k]^* \\
&= \mathrm{B}[1]^*/(a_{1,0})
\otimes \bigotimes_{\text{\rm odd $k>1$}}\mathrm{B}[k]^*.
\end{align*}

We may identify $H_*(M\O\langle n\rangle)$ with
$H_*(B\O\langle n\rangle)$ using the Thom isomorphism
which is an isomorphism of algebras over the Dyer-Lashof
algebra but not over the Steenrod algebra. To avoid
excessive notation we will often treat the Thom isomorphism
as an equality and write $a_{k,s}^{(r)}$ for each of
the corresponding elements.

The generators $a_{2^s-1,0}$ are particularly interesting.
In $H_*(B\O)$, $a_{2^s-1,0}$ is primitive, and in $H_*(M\O)$
there is a simple formula for the $\mathcal{A}_*$-coaction:
\begin{equation}\label{eq:coaction-a}
\psi(a_{2^s-1,0}) =
1\otimes a_{2^s-1,0} + \zeta_1\otimes a_{2^{s-1}-1,0}^2
+ \zeta_2\otimes a_{2^{s-2}-1,0}^4 + \cdots
+ \zeta_{s-1}\otimes a_{1,0}^{2^{s-1}} + \zeta_s\otimes1.
\end{equation}
The natural orientation $M\O\to H\F_2$ induces an
algebra homomorphism over both of the Dyer-Lashof
and Steenrod algebras under which
\begin{equation}\label{eq:ak0->zeta}
a_{2^s-1,0}\mapsto\zeta_s.
\end{equation}

For completeness, we also describe the homology
of $B\Spinc$ in similar algebraic form to that
of Theorem~\ref {thm:H*BO<n>} since we are not
aware of this being documented anywhere else;
note that~\cite{RES:Book}*{page~293} contains
an apparently incorrect statement on the mod~$2$
cohomology, while~\cite{Harada&Kono} describes
the cohomology of $B\Spinc(n)$.
\begin{thm}\label{thm:Spinc}
The natural infinite loop map $B\Spinc\to B\O$
induces a monomorphism of Hopf algebras
$H_*(B\Spinc)\to H_*(B\O)$ with image
\[
\bigotimes_{\substack{\text{\rm $k$ odd}\\ \alpha(k)\leq2}}\mathrm{B}^{(1)}[k]_*
\otimes
\bigotimes_{\substack{\text{\rm $k$ odd}\\ \alpha(k)>2}}\mathrm{B}[k]_*.
\]
\end{thm}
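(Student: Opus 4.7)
The plan is to exploit the fibration
\[
B\Spin\to B\Spin^c\to K(\Z,2)
\]
arising from the short exact sequence $1\to\Spin\to\Spinc\to\U(1)\to1$, together with Theorem~\ref{thm:H*BO<n>} for $H_*(B\Spin)$. Injectivity and Hopf-algebra compatibility of the map $H_*(B\Spin^c)\to H_*(B\O)$ are automatic: injectivity follows from Stong's surjectivity of $H^*(B\O;\F_2)\to H^*(B\Spin^c;\F_2)$, and the map is a Hopf-algebra homomorphism because it is induced by an infinite loop map. What needs proving is the identification of the image.

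The mod-$2$ homology Serre spectral sequence of the fibration has
\[
E^2_{*,*}=\F_2[b]\otimes H_*(B\Spin;\F_2),\qquad |b|=2,
\]
converging to $H_*(B\Spin^c;\F_2)$. The first step is to show collapse at $E^2$. A Poincar\'e-series check settles this: the ratio of the claimed Poincar\'e series for $H_*(B\Spin^c)$ to that of $H_*(B\Spin)$ evaluates to
\[
\prod_{s\geq0}\frac{1-t^{2^{s+2}}}{1-t^{2^{s+1}}}
 =\prod_{s\geq0}(1+t^{2^{s+1}})=\frac{1}{1-t^2}=P_{K(\Z,2)}(t),
\]
so the spectral sequence must collapse for dimensional reasons. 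A more conceptual reason is that the double cover $B\Spin\times K(\Z,2)\to B\Spin^c$ arising from $\Spin\times\U(1)\to\Spinc$ provides compatible classes detecting the fibre generators.

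The second step is to identify the image in $H_*(B\O)$. The composite $K(\Z,2)=B\U(1)\to B\Spin^c\to B\SO\to B\O$ factors through the standard inclusion $B\SO(2)\to B\SO$ coming from the underlying real bundle of the tautological line bundle. On mod-$2$ cohomology this composite sends $w_2\mapsto u$ and $w_i\mapsto0$ for $i\neq 0,2$, so dually $b\in H_2(K(\Z,2))$ maps to the generator $a_{1,0}^{(1)}=a_{1,0}^2$ of $\mathrm{B}^{(1)}[1]_*$; iterating Frobenius gives $b^{2^s}\mapsto a_{1,s}^{(1)}$. Combined with Theorem~\ref{thm:H*BO<n>}, which identifies the image of $H_*(B\Spin)\to H_*(B\O)$ as $\mathrm{B}^{(2)}[1]_*\otimes\bigotimes_{\alpha(k)=2}\mathrm{B}^{(1)}[k]_*\otimes\bigotimes_{\alpha(k)>2}\mathrm{B}[k]_*$, the extra generators $a_{1,s}^{(1)}$ enlarge the $k=1$ block from $\mathrm{B}^{(2)}[1]_*$ to $\mathrm{B}^{(1)}[1]_*$, while the other blocks remain unchanged. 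This matches the claimed image.

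The main obstacle is excluding multiplicative extensions in the Serre spectral sequence: additive collapse does not automatically imply that the associated graded ring structure gives the right extension. A clean bypass is to dualise Stong's computation of $H^*(B\Spin^c;\F_2)$ and match the polynomial generators directly against those of the Husemoller-Witt decomposition. Once agreement is verified in low degrees, the higher generators are forced either by the Poincar\'e-series equality above or by compatibility with the Dyer--Lashof operations acting on both sides.
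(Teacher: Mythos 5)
Your overall strategy---running the Serre spectral sequence of $B\Spin\to B\Spinc\to K(\Z,2)$ and bolting the base onto the known answer for $H_*(B\Spin)$---is genuinely different from the paper's, which instead uses the cohomology Serre spectral sequence of the fibration $K(\Z,2)\to B\Spinc\to B\SO$, shows that the fibre classes $x^{2^t}$ transgress to the primitives $a_{2^{t+1}+1,0}\equiv w_{2^{t+1}+1}$ modulo decomposables, concludes that $H^*(B\O)\to H^*(B\Spinc)$ is an epimorphism, and reads off $H^*(B\Spinc)\iso\bigotimes_{\alpha(k)\leq2}\mathrm{B}[k]^*/(a_{k,0})\otimes\bigotimes_{\alpha(k)>2}\mathrm{B}[k]^*$ before dualising. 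Your route could probably be made to work (collapse of your spectral sequence does follow, for instance, from the surjectivity of $H^*(B\Spinc;\F_2)\to H^*(B\Spin;\F_2)$, which holds because $H^*(B\O)\to H^*(B\Spin)$ is already surjective and factors through $H^*(B\Spinc)$), but as written it has several genuine gaps.

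First, $H_*(K(\Z,2);\F_2)$ is not the polynomial algebra $\F_2[b]$: it is the divided power algebra dual to $\F_2[x]$, so $b^2=0$ and the elements ``$b^{2^s}$'' you invoke vanish for $s\geq1$. Consequently ``iterating Frobenius'' cannot produce the generators $a_{1,s}^{(1)}=a_{1,s}^2$ ($s\geq1$) needed to enlarge $\mathrm{B}^{(2)}[1]_*$ to $\mathrm{B}^{(1)}[1]_*$; even if $b^{2^s}$ were nonzero it would map to $(a_{1,0}^2)^{2^s}=a_{1,0}^{2^{s+1}}$, a power of the degree-$2$ generator rather than the new polynomial generator $a_{1,s}^2$ in degree $2^{s+1}$. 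Second, the map $K(\Z,2)=B\U(1)\to B\Spinc$ induced by the central $\U(1)$ of the extension you quote becomes null after composing with $B\Spinc\to B\SO$ (the central circle lies in the kernel of $\Spinc\to\SO$), so this composite does not factor through the standard inclusion $B\SO(2)\to B\SO$ and does not send $w_2$ to $u$; what you would need is the section of the determinant projection coming from the canonical $\Spinc$ structure on a complex line bundle, which is a different map and requires separate justification. Third, your Poincar\'e series argument for collapse is circular: it presupposes the claimed answer for $H_*(B\Spinc)$, which is the statement being proved. Finally, your proposed bypass---dualising ``Stong's computation of $H^*(B\Spinc;\F_2)$''---appeals to a computation that is not available: the paper explicitly remarks that it knows of no correct published description and that Stong's book contains an apparently incorrect statement about this cohomology, which is precisely why the theorem is proved from scratch there.
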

\begin{proof}[Sketch of proof]
The cohomology ring $H^*(B\Spinc)$ can be calculated
using the Serre spectral sequence
\[
\mathrm{E}_2^{r,s} = H^r(B\SO;H^s(K(\Z,2)))
                     \Lra H^{r+s}(B\Spinc)
\]
for the fibration sequence
\[
K(\Z,2)\to B\Spinc \to B\SO.
\]
Then
\[
\mathrm{E}_2^{*,*} = \F_2[w_k: k\geq2]\otimes\F_2[x],
\]
where $w_k\in H^k(B\SO)$ is the image of the
$k$-th Stiefel-Whitney class, while $x\in H^2(K(\Z,2))$
and $x^{2^t}\in H^{2^{t+1}}(K(\Z,2))$ transgresses
to
\[
d_{2^{t+1}+1}(x^{2^t}) = w_{2^{t+1}+1} \pmod{\text{decomposables}}.
\]
As $d_{2^{t+1}+1}(x^{2^t})$ has to be a primitive,
it must agree with the element $a_{2^{t+1}+1,0}$.
It follows that the natural map $B\Spinc\to B\O$
induces an epimorphism $H^*(B\O)\to H^*(B\Spinc)$,
while dually $H_*(B\Spinc)\to H_*(B\O)$ is a
monomorphism. Also $H^*(B\Spinc)$ is polynomial
with one generator in each degree~$k$ where either
$\alpha(k)>2$ or $k$ is even with $\alpha(k)\leq2$.
Indeed there is an isomorphism of Hopf algebras
\[
H^*(B\Spinc) \iso
\bigotimes_{\substack{\text{\rm $k$ odd}\\ \alpha(k)\leq2}}\mathrm{B}[k]^*/(a_{k,0})
\otimes
\bigotimes_{\substack{\text{\rm $k$ odd}\\ \alpha(k)>2}}\mathrm{B}[k]^*.
\]
The claimed description of the homology $H_*(B\Spinc)$
follows.
\end{proof}
\begin{rem}\label{rem:Spinc}
The natural map $B\Spin\to B\Spinc$ induces
a homomorphism in homology whose image
contains $(a_{1,0}^{(1)})^2$, $a_{3,0}^{(1)}$
and $a_{7,0}$.
\end{rem}

\section{Dyer-Lashof operations and Steenrod coactions}
\label{sec:DL&StAct}

For the convenience of the reader, we summarise some
results from~\cite{Nishida} which are based on work
of Kochman and Steinberger~\cites{SOK:DLops,LNM1176}.

The mod~$2$ Steenrod algebra $\mathcal{A}_*$ is the
homology of the mod~$2$ Eilenberg-Mac~Lane spectrum
$H=H\F_2$ which is an \Einfty ring spectrum and so
$\mathcal{A}_*$ supports an action of the Dyer-Lashof
operations. However, when dealing with the left
$\mathcal{A}_*$-coaction on the homology of an \Einfty
ring spectrum it is often convenient to consider a
twisted version formed using the antipode $\chi$
and given by
\[
\tdlQ^s = \chi\dlQ^s\chi.
\]
Based on Steinberger's determination of the usual
action~\cite{LNM1176}, by~\cite{Nishida}*{lemma~4.4}
we have the following equivalent formulae for all
$s\geq1$:
\begin{subequations}\label{eq:DL-A*}
\begin{align}
\dlQ^{2^s}\xi_s &= \xi_{s+1} + \xi_1\xi_s^2,
                   \label{eq:DL-xi} \\
\tdlQ^{2^s}\zeta_s &= \zeta_{s+1} + \zeta_1\zeta_s^2.
\label{eq:tDL-zeta}
\end{align}
\end{subequations}

The spectra $H\Z$, $k\O$ and $\tmf$ are all \Einfty  ring
spectra and there are \Einfty  morphisms $H\Z\to H\F_2$,
$k\O\to H\F_2$ and $\tmf\to H\F_2$ inducing monomorphisms on
$H_*(-)$ identifying their homology with the subalgebras
\[
\F_2[\zeta_1^8,\zeta_2^4,\zeta_3^2,\zeta_4,\zeta_5,\ldots]
\subseteq
\F_2[\zeta_1^4,\zeta_2^2,\zeta_3,\zeta_4,\ldots]
\subseteq
\F_2[\zeta_1^2,\zeta_2,\zeta_3,\ldots]
\subseteq\mathcal{A}_*.
\]
It follows that each of these subalgebras is closed under
the Dyer-Lashof operations. More generally by work of
Stong~\cite{RES:ConnCov}, each of the \Einfty  morphisms
$M\O\langle2^d\rangle\to H\F_2$ induces a ring homomorphism
whose image is
$\F_2[\zeta_1^{2^d},\zeta_2^{2^{d-1}},\ldots,
             \zeta_{d}^2,\zeta_{d+1},\zeta_{d+2},\ldots]$
and this must be closed under the Dyer-Lashof operations.

We will give a purely algebraic generalisation of these
observations.

For $n\geq0$, let
\[
\mathcal{I}(n) = (\zeta_1^{2^{n+1}},\zeta_2^{2^{n}},\zeta_3^{2^{n-1}},
  \ldots,\zeta_n^{4},\zeta_{n+1}^2,\zeta_{n+2},\zeta_{n+3},\ldots)
  \lhd \mathcal{A}_*.
\]
This is a Hopf ideal and
$\mathcal{A}(n)_* = \mathcal{A}_*/\mathcal{I}(n)$ is a
well-known finite quotient Hopf algebra. We also set
\[
\mathcal{I}(n)^{[d]}
= \{ \alpha^{2^d} : \alpha\in\mathcal{I}(n) \}
\lhd \mathcal{A}_*,
\]
and observe that
\begin{equation}\label{eq:I(n)[1]-I(n+1)}
\mathcal{I}(n)^{[d+1]} \subseteq \mathcal{I}(n+1)^{[d]}
            \subseteq \mathcal{I}(n+d).
\end{equation}
\begin{lem}\label{lem:Qkzetas}
Let $s\geq1$. If $k\in\N$, then
$\dlQ^k \zeta_s \in\mathcal{I}(s-1)$; more generally,
for $r\geq0$, $\dlQ^k(\zeta_s^{2^r}) \in\mathcal{I}(s+r-1)$.
\end{lem}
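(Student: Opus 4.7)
I would reduce the second assertion to the first, and prove the first by induction on $s\geq 1$. For the reduction, fix $s$ and induct on $r\geq 1$: by the Cartan formula, $\dlQ^k(\zeta_s^{2^r}) = (\dlQ^{k/2}(\zeta_s^{2^{r-1}}))^2$ when $k$ is even and zero otherwise, and by the inductive hypothesis together with \eqref{eq:I(n)[1]-I(n+1)} this lies in $\mathcal{I}(s+r-2)^{[1]} \subseteq \mathcal{I}(s+r-1)$.

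For the first assertion I record three structural facts. First, $\mathcal{I}(s-1)$ is a Hopf ideal, being the kernel of the quotient Hopf algebra map $\mathcal{A}_* \to \mathcal{A}(s-1)_*$, so it is stable under the antipode $\chi$. Second, $\mathcal{I}(s-1)$ is closed under the Dyer-Lashof operations: it coincides with the $\mathcal{A}_*$-ideal generated by the augmentation ideal of the $\dlQ$-stable $\mathcal{E}_\infty$-subalgebra $H_*(M\O\langle 2^s\rangle) = \F_2[\zeta_1^{2^s},\zeta_2^{2^{s-1}},\ldots,\zeta_s^2,\zeta_{s+1},\ldots] \subseteq \mathcal{A}_*$, and the Cartan formula $\dlQ^k(fg) = \sum_{a+b=k}\dlQ^a f\cdot\dlQ^b g$ propagates closure from the generators to the ideal. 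Third, $\mathcal{A}(s-1)_*$ is finite-dimensional, so $\dlQ^k\zeta_s \in \mathcal{I}(s-1)$ holds automatically once $k+2^s-1$ exceeds its top degree, leaving only finitely many values of $k$ to check for each $s$.

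Unstability handles $k<2^s-1$, and $\dlQ^{2^s-1}\zeta_s = \zeta_s^2$ is a defining generator of $\mathcal{I}(s-1)$. For $k=2^s$, Steinberger's identity \eqref{eq:tDL-zeta} gives $\tdlQ^{2^s}\zeta_s = \zeta_{s+1} + \zeta_1\zeta_s^2 \in \mathcal{I}(s-1)$; to pass to the untwisted version I would use $\dlQ^{2^s}\zeta_s = \chi\tdlQ^{2^s}\xi_s$, expand $\xi_s$ via the Milnor antipode recursion as a polynomial in $\zeta_1,\ldots,\zeta_s$, and apply the Cartan formula, invoking the first and second structural facts together with the outer induction (through the antipode) to place the resulting $\tdlQ^\bullet(\zeta_j^{2^\bullet})$-terms with $j<s$ into $\mathcal{I}(s-1)$. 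For the remaining intermediate values of $k$ below the dimension threshold of the third fact, I would apply Nishida's formula \eqref{eq:tpsiQ^rz_n} to $z=\zeta_s$, whose right coaction is $\tpsi\zeta_s = \sum_{a+b=s}\zeta_b^{2^a}\otimes\xi_a$, and match $(-)\otimes 1$-components: the outer induction combined with the first two structural facts places every summand of the expansion of $\tpsi\dlQ^k\zeta_s$ other than $\dlQ^k\zeta_s\otimes 1$ inside $\mathcal{I}(s-1)\otimes\mathcal{A}_*$, forcing $\dlQ^k\zeta_s \in \mathcal{I}(s-1)$.

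The principal obstacle is this Nishida-based calculation at intermediate $k$. Because $\dlQ^k$ is an irreducible Dyer-Lashof operation for $k>2^s$, Adem relations offer no direct reduction, and the $\tpsi$-$\dlQ$ compatibility is essential for inductive control of $\dlQ^k\zeta_s$. The main bookkeeping burden is the Cartan expansion of $\dlQ^{k'}(\tpsi\zeta_s)$ for $2^s-1\leq k'\leq k$ against the closure properties of $\mathcal{I}(s-1)$; the generating-function coefficients appearing in \eqref{eq:tpsiQ^rz_n} essentially force the structure of the argument.
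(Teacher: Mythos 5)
Your reduction of the second assertion to the first via the Cartan formula and \eqref{eq:I(n)[1]-I(n+1)} matches the paper, as does your dispatch of $k<2^s-1$ (instability) and $k=2^s-1$ (where $\dlQ^{2^s-1}\zeta_s=\zeta_s^2$ is a generator of $\mathcal{I}(s-1)$), and your observation that finite-dimensionality of $\mathcal{A}(s-1)_*$ leaves only finitely many $k$ to check is correct. But the engine you propose for those remaining $k$ does not work. Formula \eqref{eq:tpsiQ^rz_n} is a naturality statement relating $\tpsi\dlQ^k z$ to $\dlQ^j(\tpsi z)$; it cannot determine $\dlQ^k\zeta_s$ itself. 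Concretely, your premise fails: the summand $\zeta_0^{2^s}\otimes\xi_s=1\otimes\xi_s$ of $\tpsi\zeta_s$ contributes terms of the form $1\otimes\dlQ^{j}\xi_s\cdot[\cdots]$ to the expansion of $\tpsi\dlQ^k\zeta_s$, and these do not lie in $\mathcal{I}(s-1)\otimes\mathcal{A}_*$. Worse, the inference would be invalid even if the premise held: applying $\aug\otimes\Id$ to the hypothesis ``$\tpsi x - x\otimes1\in\mathcal{I}(s-1)\otimes\mathcal{A}_*$'' forces $\chi(x)$ to be a scalar, while applying $\Id\otimes\aug$ to the Nishida expansion merely returns the tautology that $\dlQ^k\zeta_s$ equals $\dlQ^k\zeta_s$ plus terms already known to lie in $\mathcal{I}(s-1)$. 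Nothing in your argument ever pins down the value of $\dlQ^k\zeta_s$ for $k>2^s$, so no component-matching can yield the membership statement.

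The missing ingredient is Steinberger's computation of the full Dyer--Lashof action on $\mathcal{A}_*$ in the closed form of \cite{Nishida}*{section~5}: $\dlQ^k\xi_n=0$ for $k\geq 2^n-1$ unless $k\equiv0$ or $2^n-1\pmod{2^n}$, and $\dlQ^{2^nm}\xi_n=\mathrm{N}_{2^nm+2^n-1}(\xi)$, where the polynomials $\mathrm{N}_j(\xi)$ satisfy a recursion expressing $\mathrm{N}_{2^nm+2^n-1}(\xi)$ as $\sum_i\xi_i\bigl(\dlQ^{2^{n-i}m}\xi_{n-i}\bigr)^{2^i}$. This is exactly what the paper's proof quotes; the induction on $s$ then closes using only the containments \eqref{eq:I(n)[1]-I(n+1)}, and in particular no closure of $\mathcal{I}(s-1)$ under Dyer--Lashof operations is ever needed. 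I would also caution against your second structural fact: deducing that closure from the geometric statement that $H_*(M\O\langle2^s\rangle)\to\mathcal{A}_*$ has $\dlQ$-stable image is essentially Corollary~\ref{cor:Qkzetas}, which the paper derives \emph{from} this lemma; the lemma is explicitly advertised as the purely algebraic replacement for that geometric observation, so importing it reverses the intended logical order (and, since $\zeta_s$ itself does not lie in that subalgebra, the geometric fact alone would in any case only control $\dlQ^k(\zeta_s^2)=(\dlQ^{k/2}\zeta_s)^2$, from which membership of $\dlQ^{k/2}\zeta_s$ in $\mathcal{I}(s-1)$ does not follow, as $\mathcal{A}(s-1)_*$ has nontrivial nilpotents).
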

\begin{proof}
We make use of the results of \cite{Nishida}*{section~5}.

The proof is by induction on~$s$. When $s=1$, for $k\geq1$,
write $k=2m$ or $k=2m+1$. Then
\[
\dlQ^{2m}\zeta_1
= \mathrm{N}_{2m+1}(\xi)
= \xi_1 \mathrm{N}_{m}(\xi)^2 + \xi_2 \mathrm{N}_{m-1}(\xi)^2
       + \xi_3\mathrm{N}_{m-3}(\xi)^2 + \cdots
 \in \mathcal{I}(0),
\]
and
\begin{align*}
\dlQ^{2m+1}\zeta_1
&= \mathrm{N}_{2m+2}(\xi) = \mathrm{N}_{m+1}(\xi)^2 \\
&= \xi_1^2 \mathrm{N}_{m}(\xi)^4 + \xi_2^2 \mathrm{N}_{m-2}(\xi)^4
       + \xi_3^2 \mathrm{N}_{m-6}(\xi)^2 + \cdots
 \in \mathcal{I}(0).
\end{align*}

Now suppose that the result holds for all~$s<n$.
Recall that for $k\geq2^n-1$, $\dlQ^k\zeta_n=0$
unless $k\equiv 0\bmod{2^n}$ or $k\equiv2^n-1\bmod{2^n}$
when
\begin{align*}
\dlQ^{2^nm}\zeta_n
&= \mathrm{N}_{2^nm + 2^n-1}(\xi) \\
&= \xi_1\mathrm{N}_{2^{n-1}m + 2^{n-1}-1}(\xi)^2
 + \xi_2\mathrm{N}_{2^{n-2}m + 2^{n-2}-1}(\xi)^4 \\
& \ph{aaaaaaaaaaaaaaaaaaaaaaaaaaaaaaaa}
+ \xi_3\mathrm{N}_{2^{n-3}m + 2^{n-3}-1}(\xi)^8 + \cdots\\
&= \xi_1(\dlQ^{2^{n-1}m}\xi_{n-1})^2
 + \xi_2(\dlQ^{2^{n-2}m}\xi_{n-2})^4
 + \xi_3(\dlQ^{2^{n-3}m}\xi_{n-3})^8
 + \cdots \\
& \in\mathcal{I}(n-2)^{[1]} + \mathcal{I}(n-3)^{[2]} + \cdots
\quad \subseteq \mathcal{I}(n-1),
\end{align*}
and similarly $\dlQ^{2^nm+2^n-1}\zeta_n \in\mathcal{I}(n-1)$.

For $r\geq0$, $\dlQ^k(\zeta_s^{2^r})=0$ unless $2^r\mid k$,
and then by~\eqref{eq:I(n)[1]-I(n+1)},
\[
\dlQ^{2^r\ell}(\zeta_s^{2^r}) = (\dlQ^{\ell}\zeta_s)^{2^r}
\in \mathcal{I}(n-1)^{[r]} \subseteq \mathcal{I}(n+r-1).
\qedhere
\]
\end{proof}
\begin{cor}\label{cor:Qkzetas}
For $n\geq0$, the cotensor product
$\mathcal{A}_*\square_{\mathcal{A}(n)_*}\F_2\subseteq\mathcal{A}_*$
is closed under the Dyer-Lashof operations, and the Dyer-Lashof
operations commute with the Hopf algebra quotient homomorphism
$\mathcal{A}_*\to\mathcal{A}(n)_*$.
\end{cor}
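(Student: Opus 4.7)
The corollary asserts two things: (i) the subalgebra $S_n := \mathcal{A}_*\square_{\mathcal{A}(n)_*}\F_2$ is $\dlQ$-stable, and (ii) the quotient $\pi\:\mathcal{A}_*\to\mathcal{A}(n)_*$ commutes with Dyer--Lashof operations --- equivalently, the Hopf ideal $\mathcal{I}(n)$ is $\dlQ$-stable. The plan is to prove (ii) first and then derive (i) from it.

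For (ii), thanks to the Cartan formula, it suffices to verify that each monomial generator $\gamma$ of $\mathcal{I}(n)$ satisfies $\dlQ^k\gamma\in\mathcal{I}(n)$ for all $k\geq 0$. The case $k=0$ is immediate, and for $k\geq 1$ Lemma~\ref{lem:Qkzetas} does all the work: for $\gamma=\zeta_s$ with $s\geq n+2$ it gives $\dlQ^k\zeta_s\in\mathcal{I}(s-1)$, and for $\gamma=\zeta_s^{2^{n+2-s}}$ with $1\leq s\leq n+1$ it gives $\dlQ^k(\zeta_s^{2^{n+2-s}})\in\mathcal{I}(s+(n+2-s)-1)=\mathcal{I}(n+1)$. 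A quick inspection of generators shows $\mathcal{I}(m+1)\subseteq\mathcal{I}(m)$ for every $m$, so both cases land in $\mathcal{I}(n+1)\subseteq\mathcal{I}(n)$.

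For (i), I would resist trying to verify closure directly on the polynomial generators $\zeta_s^{e_n(s)}$ of $S_n$: Lemma~\ref{lem:Qkzetas} only places $\dlQ^k(\zeta_s^{e_n(s)})$ inside $\mathcal{I}(n+1)$, which is an ideal of $\mathcal{A}_*$ and is \emph{not} contained in $S_n$. Instead, I would use the characterisation of $S_n$ as the equaliser of the right $\mathcal{A}(n)_*$-coaction $\psi_R=(1\otimes\pi)\circ\Delta\:\mathcal{A}_*\to\mathcal{A}_*\otimes\mathcal{A}(n)_*$ and the map $x\mapsto x\otimes 1$. The coproduct $\Delta$ is induced by the multiplication of the \Einfty ring spectrum $H$ and is therefore a morphism of algebras over the Dyer--Lashof algebra (the action on $\mathcal{A}_*\otimes\mathcal{A}_*$ being given by the Cartan formula on the smash product). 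Combined with (ii), the coaction $\psi_R$ commutes with every $\dlQ^k$. Hence for $x\in S_n$,
\[
\psi_R(\dlQ^k x) = \dlQ^k(\psi_R x) = \dlQ^k(x\otimes 1) = \sum_{i+j=k}\dlQ^i x\otimes\dlQ^j(1) = \dlQ^k x\otimes 1,
\]
using $\dlQ^0(1)=1$ and $\dlQ^j(1)=0$ for $j\geq 1$. Thus $\dlQ^k x\in S_n$.

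The substantive content sits entirely in Lemma~\ref{lem:Qkzetas}; the only delicate point in the corollary itself is choosing to route (i) through (ii) via this coinvariants characterisation, rather than attempting a direct generator-by-generator verification inside $S_n$ with the ambient-ideal information the lemma actually supplies.
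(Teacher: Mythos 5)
Your treatment of the second assertion is correct and is the natural reading of how the Corollary follows from the Lemma: Lemma~\ref{lem:Qkzetas} places $\dlQ^k$ of every generator of $\mathcal{I}(n)$ in $\mathcal{I}(n+1)\subseteq\mathcal{I}(n)$, and since $\dlQ^0$ annihilates positive-degree classes, the Cartan formula shows $\mathcal{I}(n)$ is a $\dlQ$-invariant ideal, so the operations descend to $\mathcal{A}(n)_*$. You are also right that the first assertion does not follow by inspecting the generators $\zeta_s^{\mathrm{e}_n(s)}$ one at a time, since the Lemma only locates their images in an ideal of $\mathcal{A}_*$ rather than in the subalgebra.

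However, your route to the first assertion rests on a false premise: the coproduct of $\mathcal{A}_*$ (and, more generally, the $\mathcal{A}_*$-coaction on the homology of an \Einfty ring spectrum) does \emph{not} commute with the Dyer--Lashof operations when the target is given the Cartan-formula operations. The true interaction is exactly the content of \eqref{eq:tpsiQ^rz_n}, whose entire point is the presence of the correction terms indexed by $k<r$; if the coaction simply commuted with the operations, the computations of Section~\ref{sec:H*Mjr} and of \cite{Nishida} would be vacuous. Concretely, by \eqref{eq:DL-xi} one has $\dlQ^2\zeta_1=\dlQ^2\xi_1=\xi_2+\xi_1^3=\chi(\xi_2)=\zeta_2$, so
\[
\Delta(\dlQ^2\zeta_1)=\Delta\zeta_2
=\zeta_2\otimes1+\zeta_1\otimes\zeta_1^2+1\otimes\zeta_2,
\]
whereas every Cartan expansion of $\dlQ^2(\Delta\zeta_1)=\dlQ^2(\zeta_1\otimes1+1\otimes\zeta_1)$ reduces to $\dlQ^2\zeta_1\otimes1+1\otimes\dlQ^2\zeta_1$ (all cross terms contain a factor $\dlQ^j(1)=0$ with $j>0$) and can never produce the term $\zeta_1\otimes\zeta_1^2$, whichever of $\dlQ$ or $\tdlQ=\chi\dlQ\chi$ is used on the two factors. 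Hence your displayed identity $\psi_R(\dlQ^kx)=\dlQ^k(\psi_Rx)$ is unjustified, and with it the closure of the cotensor product. The statement is of course true, but an algebraic proof must control the correction terms of \eqref{eq:tpsiQ^rz_n} after projection to $\mathcal{A}(n)_*$ --- which is precisely where Lemma~\ref{lem:Qkzetas} and the explicit formulae in its proof have to re-enter --- or else one falls back on the topological observation recorded at the start of Appendix~\ref{sec:DL&StAct}, that $\mathcal{A}_*\square_{\mathcal{A}(n)_*}\F_2$ is the image of $H_*(M\O\langle2^{n+1}\rangle)$ under an \Einfty orientation and is therefore closed under the operations.
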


\begin{bibdiv}
\begin{biblist}

\bib{JFA:Chicago}{book}{
    author={Adams, J. F.},
    title={Stable Homotopy and Generalised Homology},
    publisher={University of Chicago Press},
    date={1974},
}

\bib{JFA&SBP:BSO}{article}{
    author={Adams, J. F.},
    author={Priddy, S. B.},
    title={Uniqueness of $B\SO$},
   journal={Math. Proc. Camb. Phil. Soc.},
   volume={80},
   pages={475\ndash509},
   date={1976},
}

\bib{AHR:MO<8>->tmf}{article}{
    author={Ando, M.},
    author={Hopkins, M.},
    author={Rezk, C.},
    title={Multiplicative orientations of $KO$-theory
    and of the spectrum of topological modular forms},
    eprint={http://www.math.uiuc.edu/~mando/papers/koandtmf.pdf},
    date={2010},
}

\bib{AB&MM:MO<8>}{article}{
   author={Bahri, A. P.},
   author={Mahowald, M. E.},
   title={A direct summand in $H^*(M\O\langle8\rangle,\Z_{2})$},
   journal={Proc. Amer. Math. Soc.},
   volume={78},
   date={1980},
   pages={295\ndash298},
}

\bib{SMB:bo^tmf}{article}{
   author={Bailey, S. M.},
   title={On the spectrum $b\mathrm{o}\wedge\mathrm{tmf}$},
   journal={J. Pure Appl. Algebra},
   volume={214},
   date={2010},
   number={4},
   pages={392\ndash401},
}

\bib{H-W&StAlg}{article}{
    author={Baker, A.},
    title={Husemoller-Witt decompositions and
    actions of the Steenrod algebra},
    date={1985},
    journal={Proc. Edinburgh Math. Soc. (2)},
    volume={28},
    pages={271\ndash288},
}

\bib{TAQ-PX}{article}{
    author={Baker, A.},
    title={Calculating with topological Andr\'e-Quillen
          theory, I: Homotopical properties of universal
          derivations and free commutative $S$-algebras},
    eprint={arXiv:1208.1868 (v5+)},
    date={2012},
}

\bib{BP-Einfinity}{article}{
    author={Baker, A.},
    title={$BP$: Close encounters of the \Einfty kind},
    date={2014},
    journal={J. Homotopy and Rel. Struct.},
    volume={92},
    pages={257\ndash282},
}

\bib{Nishida}{article}{
    author={Baker, A.},
    title={Power operations and coactions in highly
    commutative homology theories},
   journal={Publ. Res. Inst. Math. Sci. of Kyoto University},
   volume={51},
   date={2015},
   pages={237\ndash272},
}


\bib{4A}{article}{
   author={Baker, A.},
   author={Clarke, F.~W.},
   author={Ray, N.},
   author={Schwartz, L.},
   title={On the Kummer congruences and the stable
   homotopy of $BU$},
   journal={Trans. Amer. Math. Soc.},
   volume={316},
   date={1989},
   pages={385\ndash432},
}

\bib{BGRtaq}{article}{
   author={Baker, A.},
   author={Gilmour, H.},
   author={Reinhard, P.},
   title={Topological Andr\'e-Quillen homology
   for cellular commutative $S$-algebras},
   journal={Abh. Math. Semin. Univ. Hamburg},
   volume={78},
   date={2008},
   number={1},
   pages={27\ndash50},
}

\bib{AJB&JPM}{article}{
    author={Baker, A. J.},
    author={May, J. P.},
    title={Minimal atomic complexes},
   journal={Topology},
    volume={43},
      date={2004},
    number={2},
     pages={645\ndash665},
}

\bib{MB-KO-NS-VS:tmf*tmf}{article}{
   author={Behrens, M.},
   author={Ormsby, K.},
   author={Stapleton, N.},
   author={Stojanoska, V.},
   title={On the ring of cooperations for\/
   $2$-primary connective topological modular
   forms},
   eprint={arXiv:1501.01050},
   date={2015},
}


\bib{LNM1176}{book}{
   author={Bruner, R. R.},
   author={May, J. P.},
   author={McClure, J. E.},
   author={Steinberger, M.},
   title={$H_\infty$ ring spectra and their
   applications},
   series={Lect. Notes in Math.},
   volume={1176},
   date={1986},
}

\bib{FC-DD-PG-MM:IntBGit}{article}{
   author={Cohen, F. R.},
   author={Davis, D. M.},
   author={Goerss, P. G.},
   author={Mahowald, M. E.},
   title={Integral Brown-Gitler spectra},
   journal={Proc. Amer. Math. Soc.},
   volume={103},
   date={1988},
   pages={1299\ndash1304},
}

%

\bib{TMF}{book}{
    author={Douglas, C. L.},
    author={Francis, J.},
    author={Henriques, A. G.},
    author={Hill, M. A.},
    title={Topological modular forms},
    journal={Math. Surv. and Monographs},
    note={Based on the Talbot workshop,
    North Conway, NH, USA, March 25-–31, 2007},
    volume={201},
    date={2015},
}

\bib{EKMM}{book}{
    author={Elmendorf, A. D.},
    author={Kriz, I.},
    author={Mandell, M. A.},
    author={May, J. P.},
    title={Rings, modules, and algebras in
    stable homotopy theory},
    journal={Math. Surv. and Monographs},
    volume={47},
    note={With an appendix by M.~Cole},
    date={1997},
}

\bib{PG-JDS-MM:GenBGit}{article}{
   author={Goerss, P. G.},
   author={Jones, J. D. S.},
   author={Mahowald, M. E.},
   title={Some generalized Brown-Gitler spectra},
   journal={Trans. Amer. Math. Soc.},
   volume={294},
   date={1986},
   pages={113--132},
}

\bib{Harada&Kono}{article}{
   author={Harada, M.},
   author={Kono, A.},
   title={Cohomology mod~$2$ of the classifying
   space of\/ $\Spin^c(n)$},
   journal={Publ. Res. Inst. Math. Sci.},
   volume={22},
   date={1986},
   pages={543--549},
}

\bib{MAH:cyclic}{article}{
   author={Hill, M. A.},
   title={Cyclic comodules, the homology of $j$,
   and $j$-homology},
   journal={Topology Appl.},
   volume={155},
   date={2008},
   pages={1730\ndash1736},
}

%

\bib{SOK:DLops}{article}{
   author={Kochman, S. O.},
   title={Homology of the classical groups
   over the Dyer-Lashof algebra},
   journal={Trans. Amer. Math. Soc.},
   volume={185},
   date={1973},
   pages={83\ndash136},
}

\bib{TL:St&DLops}{article}{
   author={Lance, T.},
   title={Steenrod and Dyer-Lashof operations
   on $BU$},
   journal={Trans. Amer. Math. Soc.},
   volume={276},
   date={1983},
   pages={497\ndash510},
}

\bib{TL&NN}{article}{
   author={Lawson, T.},
   author={Naumann, N.},
   title={Strictly commutative realizations of diagrams
   over the Steenrod algebra and topological modular
   forms at the prime~$2$},
   journal={Int. Math. Res. Not. IMRN},
   date={2014},
   number={10},
   pages={2773\ndash2813},
}


\bib{AL:NotesThomSpec}{article}{
   author={Liulevicius, A.},
   title={Notes on homotopy of Thom spectra},
   journal={Amer. J. Math.},
   volume={86},
   date={1964},
   pages={1--16},
}
	
\bib{AL:HomComod}{article}{
   author={Liulevicius, A.},
   title={Homology comodules},
   journal={Trans. Amer. Math. Soc.},
   volume={134},
   date={1968},
   pages={375--382},
}


\bib{Margolis}{book}{
   author={Margolis, H. R.},
   title={Spectra and the Steenrod algebra. Modules over
   the Steenrod algebra and the stable homotopy category},
   publisher={North-Holland},
   date={1983},
}

\bib{JPM:SteenrodOps}{article}{
   author={May, J. P.},
   title={A general algebraic approach to Steenrod
   operations},
   journal={Lect. Notes in Math.},
   volume={168},
   date={1970},
   pages={153\ndash231},
}

\bib{M&M}{article}{
   author={Milnor, J. W.},
   author={Moore, J. C.},
   title={On the structure of Hopf algebras},
   journal={Ann. of Math. (2)},
   volume={81},
   date={1965},
   pages={211\ndash264},
}

\bib{Montgomery}{book}{
   author={Montgomery, S.},
   title={Hopf algebras and their actions on rings},
   series={CBMS Regional Conference Series in Mathematics},
   volume={82},
   date={1993},
}

\bib{DJP:LondonConf}{article}{
    author={Pengelley, D. J.},
    title={The $A$-algebra structure of Thom spectra:
    $MSO$ as an example},
    note={in \emph{Current trends in algebraic topology,
    Part 1 (London, Ont., 1981)}},
    journal={CMS Conf. Proc.},
    volume={2},
    date={1982},
    pages={511-–513},
}

\bib{DJP:MSOMSU}{article}{
    author={Pengelley, D. J.},
    title={The mod two homology of $MSO$ and $MSU$
    as $A$ comodule algebras, and the cobordism
    ring},
    journal={J. Lond. Math. Soc.},
    series={2},
    volume={25},
    year={1982},
    pages={467\ndash472},
}

\bib{DJP:MO<8>}{article}{
   author={Pengelley, D. J.},
   title={$H^*(M\mathrm{O}\langle 8\rangle ;\,\Z/2)$
   is an extended $A^*_2$-coalgebra},
   journal={Proc. Amer. Math. Soc.},
   volume={87},
   date={1983},
   pages={355\ndash356},
}

\bib{DCR:GreenBook}{book}{
   author={Ravenel, D. C.},
   title={Complex cobordism and stable homotopy
   groups of spheres},
   series={Pure and Applied Mathematics},
   volume={121},
   publisher={Academic Press},
   date={1986},
}

\bib{RES:ConnCov}{article}{
    author={Stong, R. E.},
    title={Determination of $H^*(BO(k,\ldots),Z_2)$
    and $H^*(BU(k,\ldots),Z_2)$},
    journal={Trans. Amer. Math. Soc.},
    volume={107},
    year={1963},
    pages={526\ndash544},
}

\bib{RES:Book}{book}{
   author={Stong, R. E.},
   title={Notes on Cobordism Theory},
   series={Mathematical notes},
   publisher={Princeton University Press \& University
   of Tokyo Press},
   date={1968},
}

\end{biblist}
\end{bibdiv}

\end{document}